\theoremstyle{definition}
\newtheorem*{thm*}{Theorem}
\newtheorem{thm}{Theorem}[section]
\newtheorem{cor}[thm]{Corollary}
\newtheorem{lem}[thm]{Lemma}
\newtheorem{prop}[thm]{Proposition}
\newtheorem{exam}[thm]{Example}
\newtheorem{rem}[thm]{Remark}
\newtheorem{defi}[thm]{Definition}
\numberwithin{equation}{section}
\def\refn#1.#2{\expandafter\def\csname#1\endcsname{[#2]}}
\def\refnr#1.{\csname#1\endcsname}
\begin{document}

\baselineskip  1.2pc

\title[  the   compactness  of Bergman-type  operators ]
{ On the  compactness  of Bergman-type  integral  operators }
\author[L. Ding]{Lijia Ding }
\address{School of Mathematical Sciences,
Peking University, Beijing, 100086, P. R. China}
\email{ljding@pku.edu.cn}
\author[J. Fan]{Junmei Fan}
\address{Department of Mathematics Sciences, Dalian University of Technology, Dalian, Liaoning, 116024, P. R. China}
\email{junmeifan666@mail.dlut.edu.cn}
 \subjclass[2010]{ 32A25; 47B07; 47B10; 42B20}
\keywords{Bergman kernel; Compact operator; Dixmier trace; Hausdorff dimension; Schatten class; Macaev class.}
\thanks{The first author was partially supported by PCPSF (2020T130016).}
\begin{abstract}

Bergman-type integral operators  are classical operators  in complex analysis and operator theory.  Recently,  the first author and his collaborator \cite{DiW} completely characterized the $L^p$-$L^q$ boundedness of Bergman-type integral operators  $K_\alpha,K_\alpha^+$ and  the $L^p$-$L^q$ compactness of $K_\alpha$  on the unit ball.  In this paper, we will  use a substantially new method to  completely characterize the $L^p$-$L^q$ compactness of $K_\alpha^+,$  but also prove that the $L^p$-$L^q$ compactness of operators $K_\alpha,K_\alpha^+$ is  in fact equivalent.  Moreover, we completely characterize Schatten class and Macaev class  Bergman-type integral operator $K_\alpha$ on   $L^2$ space and Bergman space via  inequalities related to the dimension of the  unit ball,  and we also give an  intrinsic characterization   by introducing the concept of Hausdorff dimension of  compact operators. The Dixmier trace of $K_\alpha$ are also calculated in this paper.

\end{abstract}
\maketitle

\section{Introduction}

This paper is a systematic research on the  $L^p$-$L^q$ compactness of Bergman-type integral operators on the unit ball. While there exist abundant works on the boundedness of Bergman-type integral operators, the investigation of compactness aspect was started only a few years.

Let us start with recalling  some notations and terminologies. Let $\mathbb{B}^d$ be the open  unit ball in the  usual complex Euclidian norm on the $d$-dimensional space $\mathbb{C}^d$ with the normalized Lebesgue measure $dv,$ which means the measure of  $\mathbb{B}^d$ is one. For any $\alpha\in\mathbb{R},$  the Bergman-type integral operator $K_\alpha$ on $L^1(\mathbb{B}^d,dv)$ is defined by
$$ K_\alpha f(z)=\int _{\mathbb{B}^d}\frac{f(w)}{(1-\langle z,w\rangle)^\alpha}dv(w),$$
where $\langle z,w \rangle=z_1\bar{w}_1+\cdots+z_d\bar{w}_d$ is the standard Hermitian inner product on  $\mathbb{C}^d.$ In particular, when $\alpha=d+1,$ $K_{d+1}$ is the standard Bergman projection on  $\mathbb{B}^d,$  since the function $K(z,w)=\frac{1}{(1-\langle z,w\rangle)^{d+1}}$ is the Bergman kernel of $\mathbb{B}^d$ with the  measure $dv.$ However, the Bergman-type integral operator $K_\alpha^+$  on $L^1(\mathbb{B}^d,dv)$   is given by 
$$ K_\alpha^+ f(z)=\int _{\mathbb{B}^d}\frac{f(w)}{|1-\langle z,w \rangle|^\alpha}dv(w).$$
Bergman-type integral operators play an important role in complex analysis and operator theory \cite{Pa,ZZ,Zhu,Zhu1,Zhu2}. In particular, for any $\alpha>0,$ if restrict $K_\alpha$ to Bergman spaces, then every $K_\alpha $ is a  special  fractional radial differential operator  \cite{ZZ,Zhu,Zhu1}   which is a kind of useful operators in the Bergman space theory on the unit ball, see Lemma \ref{kr} below. On the other hand, the operator $ K_\alpha^+$ in the complex analysis category, to some extent, is analogous to the Riesz potential  operators in the real analysis. The classical Riesz potential  operator   $R_\alpha$ is defined on the real Euclidian space $\mathbb{R}^d,$ whose  basic result
concerning mapping properties  is the Hardy-Littlewood-Sobolev theorem, which  characterizes   the boundedness of  Riesz potential  operator   $R_\alpha: L^p(\mathbb{R}^d)\rightarrow L^q(\mathbb{R}^d)$ and has been applied to the PDE theory for a long time; we refer the reader to \cite{Cui,Li,Pl}. 

 For convenience, we replace $L^p(\mathbb{B}^d,dv)$ by $L^p(\mathbb{B}^d)$ or  $L^p$  for any $1\leq p\leq\infty$ without confusion arises.  Bergman-type operators  $K_\alpha,K_\alpha^+$ are called  $L^p$-$L^q$ bounded (or compact) if  $K_\alpha,K_\alpha^+:L^p(\mathbb{B}^d)\rightarrow L^q(\mathbb{B}^d)$ are  bounded (or compact), where $1\leq p,q\leq\infty$.
Researches on  $L^p$-$L^q$ problems of Bergman operators $K_\alpha,K_\alpha^+$  go back to the  boundedness of Bergman projection on  bounded  domains. 
F. Forelli and W. Rudin \cite{FR}  proved  that  the Bergman projection  on the unit ball  is $L^p$-$L^p$ bounded if and only if $1<p<\infty,$  in their method an (Forelli-Rudin) asymptotic estimate of integral for Bergman kernel plays an important role; indeed, following the same method, one can   characterize the $L^p$-$L^p$ boundedness of more general Bergman-type operators, see \cite{Zhu,Zhu1}. Around the same time,  H. Phong and E. Stein \cite{PS} proved more  general results on a class of  bounded strongly pseudoconvex domains; see \cite{LS,Mc,MS} for more results along this line. 

 Recently, G. Cheng and X. Fang et al \cite{CF} completely characterized the $L^p$-$L^q$ boundedness of $K_\alpha$ on the unit disk $\mathbb{D},$ i.e. the case $d=1,$ and they  conjectured that there should exist  similar results in the high dimensional case. Their proofs depend on techniques of harmonic analysis and coefficient multiplier theory of Bergman space on the unit disk. Unfortunately, this method can not  be directly applied to the case of unit ball.  But, for the spacial case $\alpha=1,$  G. Cheng, C. Liu  et al \cite{CH} solved the  $L^p$-$L^q$  boundedness problem of $K_1,K_1^+$ on $\mathbb{B}^d.$  The author and his collaborator \cite{DiW} completely characterized  the  $L^p$-$L^q$ boundedness of $K_\alpha,K_\alpha^+$  and established some Hardy-Littlewood-Sobolev type inequalities on  $\mathbb{B}^d,$   motivated  by an observation in \cite{Zhu1} we further  investigated the $L^p$-$L^q$ compactness of $K_\alpha.$
 Techniques of  complex and  harmonic analysis are  synthetically utilized  in \cite{DiW}, such as the Carleson measure theory is applied to the  $L^p$-$L^q$ compactness problem. The key method used for compactness relies on the fact that  $K_\alpha$ maps $L^p$ space into Bergman space or Bloch space; however,  $K_\alpha^+$ has no such analytic property, thus  the  compactness characterization of  $K_\alpha^+$  remains open.

The first purpose of the present paper is to continue to  characterize the $L^p$-$L^q$ compactness of $K_\alpha^+.$  
The following two theorems are our main results on the  $L^p$-$L^q$ compactness.

{\noindent\bf Theorem 1\label{thm1}.} Suppose  $ d+1<\alpha<d+2,$ then the following statements are equivalent.
\begin{enumerate}
\item $K_\alpha:L^p\rightarrow L^q$ is bounded.
\item $K_{\alpha}^+:L^p\rightarrow L^q$ is bounded.
\item $K_\alpha:L^p\rightarrow L^q$ is compact.
\item $K_\alpha^+:L^p\rightarrow L^q$ is compact.
\item $p,q$ satisfy one of the following inequalities:\begin{enumerate}
\item[(a)]  $\frac{1}{d+2-\alpha}<p<\infty, \frac{1}{q}>\frac{1}{p}+\alpha-(d+1);$
\item [(b)] $p=\infty, q<\frac{1}{\alpha-(d+1)}.$
\end{enumerate}
\end{enumerate}
The equivalences of (1),(2),(3) and  (5) in Theorem 1 have been proved  in \cite{DiW}, from Theorem 1 we know the $L^p$-$L^q$ boundedness and compactness of operators $K_\alpha,K_\alpha^+$ are equivalent when  $ d+1<\alpha<d+2.$

{\noindent\bf Theorem 2\label{th2}.} Suppose  $ 0 <\alpha\leq d+1,$ then the following statements are equivalent:
\begin{enumerate}
\item $K_\alpha:L^p\rightarrow L^q$ is compact.
\item $K_\alpha^+:L^p\rightarrow L^q$ is compact.
\item $p,q$ satisfy one of the following inequalities:\begin{enumerate}
\item[(a)] $ p=1, q<\frac{d+1}{\alpha};$
\item[(b)] $1<p<\frac{d+1}{d+1-\alpha}, \frac{1}{q}> \frac{1}{p}+ \frac{\alpha}{d+1}-1;$
\item[(c)] $p=\frac{d+1}{d+1-\alpha}, q < \infty;$
\item[(d)] $\frac{d+1}{d+1-\alpha}<p\leq\infty.$
\end{enumerate}
\end{enumerate}
The equivalence of (1) and (3)  in Theorem 2 has been proved  in \cite{DiW}, combing Theorem 2 with   \cite[Theorem 2,3]{DiW}, we know the $L^p$-$L^q$ boundedness and compactness of operators $K_\alpha,K_\alpha^+$ are different  when  $ 0<\alpha\leq d+1.$  When $\alpha\leq0,$ Proposition \ref{fink} below shows  that  $K_\alpha,K_\alpha^+$ are all $L^p$-$L^q$  compact for any $1\leq p,q\leq\infty;$ moreover, $K_\alpha$  is a finite rank operator if $\alpha$ is a nonpositive integer and $K_\alpha^+$  is a finite rank operator if $\alpha$ is a nonpositive even integer. In contrast, when $\alpha\geq d+2,$  there  exist no $1\leq p,q\leq \infty $ such that $K_\alpha^+:L^p\rightarrow L^q$ is compact, see  Corollary \ref{kcor}  below.

 Theorem 2 and Proposition \ref{fink} imply   Bergman-type operator $K_\alpha $ are compact on the Hilbert space $L^2$ only when $ \alpha< d+1.$ As we all known, Schatten classes and Macaev classes are more refined classification of compact operators on Hilbert spaces, but also   involve global estimates of the spectrum of  compact operators. The next two theorems deal with the problem of  Schatten class and Macaev class Bergman-type operators.  Let $H$ be a separable Hilbert space,  denote by $\mathcal{L}^p(H)$ and $\mathcal{L}^{p,\infty}(H)$ the Schatten $p$-class (or ideal) and Macaev $p$-class (or ideal) on $H$ respectively, where $0 < p<\infty.$ More precisely, let $T$ be a compact operator on $H$ and \begin{equation}\label{pospe}\{\mu_n(\vert T\vert): \mu_0(\vert T\vert)\geq\mu_1(\vert T\vert)\geq\mu_2(\vert T\vert)\geq\cdots\}\end{equation}   be  the set of eigenvalues (counting multiplicities)  of  $\vert T\vert:=(T^\ast T)^{\frac{1}{2}},$ then $T\in \mathcal{L}^p(H)$ if and only if
 $\{\mu_n(\vert T\vert)\}\in \ell^p,$ and $T\in \mathcal{L}^{p,\infty}(H)$ if and only if  $\mu_n(\vert T\vert)=O(n^{-\frac{1}{p}}).$ We also use $\mathcal{L}^\infty(H)=\mathcal{L}^{\infty,\infty}(H)$ to stand for the compact operator ideal on $H.$
 Moreover, $\mathcal{L}^p(H)$ and $\mathcal{L}^{p,\infty}(H)$ will become  Banach spaces when provided suitable norms for $1 \leq p < \infty.$ We refer the reader to \cite{Con,Rin,Zhu1,Zhu2} for more details about Schatten class and Macaev class operators. Let us denote the set $\mathscr{S}_d\subset\mathbb{R}$ by $$\mathscr{S}_d=\{\alpha<d+1: -\alpha\notin \mathbb{N}\},$$ where $ \mathbb{N}$ is the set of all nonnegative integers.  For $1\leq p\leq \infty,$ let $A^p=\textrm{Hol}(\mathbb{B}^d)\cap L^p$ be the $p$-integrable  Bergman space on $\mathbb{B}^d.$ In particular, $A^2$ is a separable Hilbert space.

{\noindent\bf Theorem 3\label{th3}.} Suppose $ \alpha\in\mathscr{S}_d $ and $0 < p<\infty,$  then the following statements are equivalent.
\begin{enumerate}
\item $K_\alpha\in \mathcal{L}^p(L^2).$
\item $K_\alpha\in \mathcal{L}^p(A^2).$
\item $\widetilde{K_\alpha}\in L^p(d\lambda).$
\item $p>\frac{d}{d+1-\alpha}.$
\end{enumerate}
Where  $\widetilde{K_\alpha}$ is the Berezin transform of $K_\alpha$ on the Bergman space $A^2$  and $d\lambda$ is the M\"obius invariant measure on $\mathbb{B}^d,$ the exact definition will be given in  the following Section \ref{pfs}.

{\noindent\bf Theorem 4\label{th4}.} Suppose $ \alpha\in\mathscr{S}_d $ and $0 < p<\infty,$  then the following statements are equivalent.
\begin{enumerate}
\item $K_\alpha\in \mathcal{L}^{p,\infty}(L^2).$
\item $K_\alpha\in \mathcal{L}^{p,\infty}(A^2).$
\item $p\geq\frac{d}{d+1-\alpha}.$
\end{enumerate}
In particular, $K_\alpha\notin\mathcal{L}^{\frac{d}{d+1-\alpha}}$ but $K_\alpha\in\mathcal{L}^{\frac{d}{d+1-\alpha},\infty}$ when $ \alpha\in\mathscr{S}_d;$ however,  $K_\alpha\in\mathcal{L}^{\frac{d}{d+1-\alpha}}$ and $K_\alpha\in\mathcal{L}^{\frac{d}{d+1-\alpha},\infty}$ if $\alpha$ is a nonpositive integer. As an application of Theorem 3,4 we calculate the Dixmier trace of $K_\alpha$ whenever $K_\alpha\in\mathcal{L}^{1,\infty},$ see Proposition \ref{dixtr} below. Moreover, we remark that the  condition $ \alpha\in\mathscr{S}_d$ in Theorem 3,4 is necessary and  sharp, namely $\mathscr{S}_d$ is the maximal set ensures the theorems holds. Note that Theorem 3,4 are two results related to the underlying domain $\mathbb{B}^d.$ We also give a more intrinsic characterization  of Schatten class and Macave class Bergman-type operator $K_\alpha $  by introducing the concept of Hausdorff dimension of  compact operators, see Definition \ref{dimhau} and Corollary \ref{smcha} below. Hausdorff dimension of  compact operators is similar to the Hauadorff dimension of subsets in metric spaces \cite{LiY}.

It is worth mentioning that our results are totally new even in the one-dimensional  case and the results of this paper can be generalized to the weighted  Lebesgue integrable spaces and more general kernel operators on  the unit ball. Moreover, we in fact provide two  approaches to solve the Schatten class  Bergman-type operators on the unit ball, one of which essentially depend on the spectrum estimate without Forelli-Rudin estimate.  On the other hand, we know that the unit ball is a spacial bounded strongly pseudoconvex domain and every bounded strongly pseudoconvex domain with noncompact automorphism group is biholomorphic to the unit ball \cite{Won}, so we gauss that there exist some similar results about boundedness and compactness on  bounded strongly pesudoconvex domains.  

The paper is organized as follows. In Section 2, we give the proof of Theorem 1 by using a  criteria of precompactness in $ L^p$ space and the interpolation  of  compact operators.   Section 3 is devoted to complete the proof of Theorem 2, which is based on the hypergeometric function theory and  the  fractional radial differential operator theory. We also describe the phenomenon of Bergman-type operators for $\alpha\leq0.$ In Section 4, we will characterize Schatten class and Macaev class Bergman-type operator $K_\alpha$ by   estimates of the spectrum and  techniques of operator theory.

\section{The case of $d+1<\alpha<d+2$}
In this section, we will prove the main Theorem 1.  We need some lemmas. The following lemma gives the regularity of the image of $K_\alpha,K_\alpha^+.$
\begin{lem}\label{klp} For any $\alpha\in \mathbb{R}$ and $f\in L^p(\mathbb{B}^d),1\leq p\leq \infty,$ then the followings hold.
\begin{enumerate}
\item $K_\alpha f$ is holomorphic on $\mathbb{B}^d.$
\item $K_\alpha^+f$ is smooth on $\mathbb{B}^d.$
\end{enumerate}
\end{lem}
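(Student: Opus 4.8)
The plan is to prove the two regularity statements separately, both by showing that the relevant integrals define functions with the claimed smoothness via standard differentiation-under-the-integral-sign arguments, the key point being that the kernels are smooth in $z$ on the compact subsets of $\mathbb{B}^d$ uniformly in $w\in\mathbb{B}^d$.

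First I would handle (1). Fix $f\in L^p(\mathbb{B}^d)$ with $1\le p\le\infty$; since $\mathbb{B}^d$ has finite measure, $f\in L^1(\mathbb{B}^d)$ by H\"older's inequality, so it suffices to treat $p=1$. For fixed $w\in\mathbb{B}^d$ the map $z\mapsto (1-\langle z,w\rangle)^{-\alpha}$ is holomorphic on $\mathbb{B}^d$ because $z\mapsto 1-\langle z,w\rangle$ is holomorphic and nonvanishing there (indeed $|\langle z,w\rangle|<1$ by Cauchy--Schwarz when $z,w\in\mathbb{B}^d$), so a holomorphic branch of the $\alpha$-th power is well defined. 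To pass holomorphy through the integral I would fix a compact set $\overline{r\mathbb{B}^d}$ with $0<r<1$ and note that on $z\in\overline{r\mathbb{B}^d}$, $w\in\mathbb{B}^d$ one has $|1-\langle z,w\rangle|\ge 1-r$, hence the integrand and all its $z$-derivatives are bounded by $C(r,\alpha)|f(w)|$, which is integrable. By Morera's theorem (or by directly differentiating under the integral and checking the Cauchy--Riemann equations, using the dominated convergence theorem to justify the interchange) $K_\alpha f$ is holomorphic on $\overline{r\mathbb{B}^d}$ for every $r<1$, hence on $\mathbb{B}^d$.

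For (2) the argument is the same in spirit: again reduce to $f\in L^1$, write the kernel as $|1-\langle z,w\rangle|^{-\alpha}=\big((1-\langle z,w\rangle)(1-\overline{\langle z,w\rangle})\big)^{-\alpha/2}$, and observe that as a function of the real coordinates of $z$ this is $C^\infty$ on $\mathbb{B}^d$ for each fixed $w$, with all partial derivatives (in $z$) bounded on $\overline{r\mathbb{B}^d}\times\mathbb{B}^d$ by a constant times $|f(w)|$, using once more that $|1-\langle z,w\rangle|\ge 1-r$ there. Iterating differentiation under the integral sign, justified each time by dominated convergence with the integrable majorant $C(r,\alpha,k)|f(w)|$, yields that $K_\alpha^+ f\in C^\infty(\overline{r\mathbb{B}^d})$ for all $r<1$, hence $K_\alpha^+ f\in C^\infty(\mathbb{B}^d)$.

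I do not expect a genuine obstacle here; the only thing requiring a little care is the bookkeeping for the higher-order derivatives in part (2) --- writing out a clean uniform bound on an arbitrary mixed partial derivative in the real and imaginary parts of $z_1,\dots,z_d$ of $|1-\langle z,w\rangle|^{-\alpha}$ on $\overline{r\mathbb{B}^d}\times\mathbb{B}^d$ --- but this is routine once one notes that each differentiation only produces extra negative powers of the nonvanishing factor $1-\langle z,w\rangle$ (and its conjugate), all of which are bounded below in modulus by $1-r$.
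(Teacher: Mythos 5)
Your proposal is correct, and both parts go through: the reduction to $f\in L^1$ via H\"older on the finite-measure ball is fine, the kernel $(1-\langle z,w\rangle)^{-\alpha}$ is unambiguously defined (the value $1-\langle z,w\rangle$ stays in the disc $\{|\zeta-1|<1\}$, so the principal branch works), and on $\overline{r\mathbb{B}^d}\times\mathbb{B}^d$ one has $1-r\le|1-\langle z,w\rangle|\le 2$, which gives the uniform majorants $C(r,\alpha,k)|f(w)|$ you need for Morera (applied coordinatewise, plus continuity) or for iterated differentiation under the integral sign. However, your route is genuinely different from the paper's. The paper does not differentiate under the integral at all: it expands the kernel in a global power series near a given point (using Rudin's Proposition~1.2.6 on the product polydisc $B_{r_0'}\times B_{r_1}$, respectively a four-variable version for $K_\alpha^+$ with the auxiliary kernel $G_\alpha(z,w,u,v)=(1-\langle z,\bar w\rangle)^{-\alpha/2}(1-\langle u,\bar v\rangle)^{-\alpha/2}$), then integrates term by term via uniform convergence and dominated convergence. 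That approach buys more than what the lemma states: it exhibits $K_\alpha f$ as an explicit power series with coefficients given by moments of $f$, and it shows $K_\alpha^+f(z)=F_\alpha(z,\bar z)$ for a function $F_\alpha$ holomorphic on $B_{r_0}\times B_{r_0}$, i.e.\ $K_\alpha^+f$ is real-analytic and extends to a holomorphic function of two sets of variables --- a fact the paper records in the remark following the lemma. Your argument is more elementary and self-contained (no appeal to the polydisc expansion theorem) and yields exactly the stated regularity, holomorphy and $C^\infty$ smoothness, but not the real-analyticity refinement; for the purposes of the lemma as stated, it is fully adequate.
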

\begin{proof} (1) It is sufficient to show that $K_\alpha f,f\in L^1(\mathbb{B}^d)$ is holomorphic  on every point of  $\mathbb{B}^d.$  Suppose $z$ is an arbitrary point in $ \mathbb{B}^d.$ Denote $B_{r}=\{w\in \mathbb{C}^d: \vert w\vert <r\},r>0.$ Choose $\vert z\vert<r_0<r_0'<1,$ and then choose $1<r_1<\frac{1}{r_0'}.$ It is easy to see that $\{z\}\times \mathbb{B}^d\subset B_{r_0}\times   \mathbb{B}^d \subset B_{r_0'}\times  B_{r_1}$ and $0<r_0<r_0r_1<1.$ It follows that the function $K_\alpha(z,w)=\frac{1}{(1-\langle z,\bar{w}\rangle)^\alpha} $ is holomorphic on the domain $B_{r_0'}\times  B_{r_1}.$ Then
 \cite[ Proposition 1.2.6]{Rud} implies that  the binary function function $K_\alpha(z,w)=\frac{1}{(1-\langle z,\bar{w}\rangle)^\alpha} $ has a global power series expansion on $B_{r_0'}\times  B_{r_1}.$ Therefor, we can suppose that
\begin{equation}\label{ser} K_\alpha(z,w) =\sum_{n,m\geq0}a_{n,m}z^n\bar{w}^m,\end{equation}  on $B_{r_0'}\times  B_{r_1}.$ Here $n,m$ are multi-indexes,  $n=(n_1,\cdots,n_d)\geq0$ means $n_j\geq0$ for any $j$, and  $z^n=\prod_{j=1}^d  z_j^{n_j}.$
Note that $ \overline{B_{r_0}}\times  \overline{ \mathbb{B}^d}\subset B_{r_0'}\times  B_{r_1},$ we get that the power series (\ref{ser}) is uniformly converge on  compact set $\overline{B_{r_0}}\times  \overline{ \mathbb{B}^d}.$ Then the dominated convergence theorem and the maximum modulus principle of analytic function imply that $$K_\alpha f(z)=\sum_{n,m\geq0}a_{n,m}\left(\int_{\mathbb{B}^d}f\bar{w}^mdv\right)z^n,$$ on $B_{r_0}\ni z.$ This implies that $K_\alpha f$ is holomorphic at   $z\in\mathbb{B}^d.$

(2) Similarly,  it suffices  to consider the case $f\in L^1.$  Denote the holomorphic function $G_\alpha(z,w,u,v)$ on $B_{r_0'}\times  B_{r_1}\times  B_{r_1}\times B_{r_0'}$ by $$G_\alpha(z,w,u,v)=\frac{1}{(1-\langle z,\bar{w}\rangle)^\frac{\alpha}{2}}\frac{1}{(1-\langle u,\bar{v}\rangle)^\frac{\alpha}{2}}.$$
By  \cite[Proposition 1.2.6]{Rud} again, we know that $G_\alpha(z,w,u,v)$ has a global power series expansion, thus we can suppose that
$$G_\alpha(z,w,u,v)=\sum_{n,m,k,l\geq0}a_{n,m,k,l}z^nw^mu^kv^l$$ on $B_{r_0'}\times  B_{r_1}\times  B_{r_1}\times B_{r_0'}.$ Since  $$ \overline{B_{r_0}}\times \overline{ \mathbb{B}^d} \times \overline{ \mathbb{B}^d}  \times \overline{B_{r_0}}\subset  B_{r_0'}\times  B_{r_1}\times  B_{r_1}\times B_{r_0'},$$ it follows from  the dominated convergence theorem and the maximum modulus principle of analytic function   that

 \begin{equation}\begin{split}\label{}
F_\alpha(u,v)&\overset{\text{def}}{=}\int_{\mathbb{B}^d}f(w)G_\alpha(u,\bar{w},w,v)dv(w)\notag\\
&=\sum_{n,m,k,l\geq0}a_{n,m,k,l}\left(\int_{\mathbb{B}^d}fw^k\bar{w}^mdv\right)u^nv^l,
\end{split}\end{equation}
is holomorphic on ${B_{r_0}}\times{B_{r_0}}.$ Then
 \begin{equation}\begin{split}\label{kfz}
 K_\alpha^+f(z)=\int _{\mathbb{B}^d}\frac{f(w)dv(w)}{|1-\langle z,w \rangle|^\alpha}= \int _{\mathbb{B}^d}f(w)G_\alpha(z,\bar{w},w,\bar{z})dv(w)=F_\alpha(z,\bar{z})\\
\end{split}\end{equation}
 on $B_{r_0}\ni z.$ It shows that  $K_\alpha^+f$ is smooth at   $z\in\mathbb{B}^d.$\end{proof}
\begin{rem} The formula (\ref{kfz}) in fact shows that, for any $f\in L^1,$ the function $K_\alpha^+f$ admits a global power series expansion in the form
$$ K_\alpha^+f(z)=\sum_{n,m\geq0}a_{n,m}z^n\bar{z}^m,$$ and $K_\alpha^+f$ uniquely determines a holomorphic function on $\mathbb{B}^d\times \mathbb{B}^d.$
\end{rem}

 From Lemma \ref{klp}, we know that the image of $K_\alpha^+$ is continuous. The next lemma provides a result on  the equicontinuity.
 \begin{lem}\label{uf} Suppose $\alpha\in\mathbb{R}, 1\leq p\leq \infty$  and $K\subset \mathbb{B}^d$ is  a compact subset, then for any $\varepsilon>0,$ there exists a  $\delta>0$ such that
\begin{equation}\label{ku} \left\vert K_\alpha^+f(z_1)- K_\alpha^+f(z_2)\right\vert\leq \Vert f\Vert_p\cdot\varepsilon, \quad\forall f\in L^p( \mathbb{B}^d),\notag\\
\end{equation}
whenever $z_1,z_2\in K$ and $\vert z_1-z_2\vert<\delta.$
\end{lem}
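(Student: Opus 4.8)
\medskip
\noindent\emph{Sketch of the argument.}
The plan is to bound the integrand pointwise and then peel off the factor $\Vert f\Vert_p$ by H\"older's inequality. Writing $g_z(w)=\vert 1-\langle z,w\rangle\vert^{-\alpha}$, one has
\[
\bigl\vert K_\alpha^+f(z_1)-K_\alpha^+f(z_2)\bigr\vert\;\leq\;\int_{\mathbb{B}^d}\vert f(w)\vert\,\bigl\vert g_{z_1}(w)-g_{z_2}(w)\bigr\vert\,dv(w)\;\leq\;\Vert f\Vert_p\,\Vert g_{z_1}-g_{z_2}\Vert_{p'},
\]
where $p'$ is the exponent conjugate to $p$ (with the conventions $1'=\infty$ and $\infty'=1$). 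Thus the lemma reduces to proving that $z\mapsto g_z$ is uniformly continuous as a map from $K$ into $L^{p'}(\mathbb{B}^d)$; and since $K$ is compact, it suffices to show this map is continuous on $K$, the uniform continuity then being automatic.

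The first step will be to produce one dominating function valid for all $z\in K$, and this is the place where the compactness of $K$ is used. Put $\rho=\max_{z\in K}\vert z\vert$, so that $\rho<1$. For $z\in K$ and $w\in\mathbb{B}^d$ one has $1-\rho\leq 1-\vert z\vert\leq\vert 1-\langle z,w\rangle\vert\leq 1+\vert z\vert\,\vert w\vert\leq 2$, hence $\vert g_z(w)\vert\leq C$ with the constant $C:=(1-\rho)^{-\vert\alpha\vert}+2^{\vert\alpha\vert}$ depending only on $K$ and $\alpha$. Since $dv$ is a finite measure, this constant $C$ belongs to $L^{p'}(\mathbb{B}^d)$ for every $1\leq p'\leq\infty$. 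I expect this to be essentially the only point of real substance: it is exactly the uniform lower bound $\vert 1-\langle z,w\rangle\vert\geq 1-\rho$ on the compact set $K$ that prevents $g_z$ from blowing up near $\partial\mathbb{B}^d$ when $\alpha>0$, and once a uniform, $L^{p'}$-integrable majorant is in hand the rest of the argument is soft.

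For the continuity itself, observe that for each fixed $w\in\mathbb{B}^d$ the map $z\mapsto 1-\langle z,w\rangle$ is continuous and nonvanishing for $z\in K$, so $z\mapsto g_z(w)$ is continuous on $K$. If $1<p\leq\infty$, then $1\leq p'<\infty$, and for any sequence $z_n\to z_0$ in $K$ we have $g_{z_n}\to g_{z_0}$ pointwise on $\mathbb{B}^d$ while $\vert g_{z_n}-g_{z_0}\vert\leq 2C\in L^{p'}$; the dominated convergence theorem then gives $\Vert g_{z_n}-g_{z_0}\Vert_{p'}\to 0$, so $z\mapsto g_z$ is continuous, hence uniformly continuous, on $K$. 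If $p=1$, then $p'=\infty$, and here one argues directly: $(z,w)\mapsto\vert 1-\langle z,w\rangle\vert^{-\alpha}$ is continuous on the compact set $K\times\overline{\mathbb{B}^d}$, hence uniformly continuous there, which is precisely the assertion that $\Vert g_{z_1}-g_{z_2}\Vert_{\infty}$ is small when $z_1,z_2\in K$ and $\vert z_1-z_2\vert$ is small. In all cases, given $\varepsilon>0$ we choose $\delta>0$ so that $\Vert g_{z_1}-g_{z_2}\Vert_{p'}\leq\varepsilon$ whenever $z_1,z_2\in K$ and $\vert z_1-z_2\vert<\delta$, and the displayed H\"older estimate then yields $\vert K_\alpha^+f(z_1)-K_\alpha^+f(z_2)\vert\leq\Vert f\Vert_p\cdot\varepsilon$ for every $f\in L^p(\mathbb{B}^d)$.
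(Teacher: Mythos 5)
Your proposal is correct, but it is organized differently from the paper's proof, and it is worth seeing why the extra machinery is not needed. The paper proves the lemma in one stroke: choosing $r<1$ with $K\subset B_r$, the kernel $(z,w)\mapsto\vert 1-\langle z,w\rangle\vert^{-\alpha}$ is uniformly continuous on the compact set $\overline{B_r}\times\overline{\mathbb{B}^d}$, so $\sup_{w}\vert g_{z_1}(w)-g_{z_2}(w)\vert\leq\varepsilon$ once $\vert z_1-z_2\vert<\delta$, and then
\[
\vert K_\alpha^+f(z_1)-K_\alpha^+f(z_2)\vert\leq\varepsilon\int_{\mathbb{B}^d}\vert f\vert\,dv=\varepsilon\,\Vert f\Vert_1\leq\varepsilon\,\Vert f\Vert_p,
\]
the last step because $dv$ is a probability measure. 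This is exactly your $p=1$ branch, and it already covers every $p$: since $\Vert g_{z_1}-g_{z_2}\Vert_{p'}\leq\Vert g_{z_1}-g_{z_2}\Vert_{\infty}$ on a normalized measure, the $L^\infty$ smallness you establish there makes the dominated-convergence argument and the case split for $1<p\leq\infty$ superfluous. Your route (H\"older with the conjugate exponent plus continuity of $z\mapsto g_z$ into $L^{p'}$, upgraded to uniform continuity by compactness of $K$) is perfectly sound, and the dominating constant $C$ built from $1-\rho\leq\vert 1-\langle z,w\rangle\vert\leq 2$ is the same key point the paper exploits; it just buys nothing extra here, whereas the paper's single uniform-continuity estimate is shorter and gives the constant uniformly in $p$ at once.
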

\begin{proof} Let $B_r$ be the ball with radius $r$ defined in Lemma \ref{klp}. Since  $K\subset \mathbb{B}^d$ is  a compact subset, there exists a $0<r<1$ such that
$K\subset B_r\subset\overline{B_r}\subset\mathbb{B}^d.$ Since the function $\frac{1}{\vert 1-\langle z,w \rangle\vert^\alpha}$ is uniformly continuous on the compact set $\overline{B_r}\times\overline{\mathbb{B}^d},$ it yields that,  for any $\varepsilon>0,$ there exists a  $\delta>0,$ such that
$$\left\vert\frac{1}{\vert 1-\langle z_1,w \rangle\vert^\alpha}-\frac{1}{\vert 1-\langle z_2,w \rangle\vert^\alpha}\right\vert\leq\varepsilon,$$
whenever $z_1,z_2\in K$ and $\vert z_1-z_2\vert<\delta.$ Thus, for any $ f\in L^p( \mathbb{B}^d),$ we have
$$\left\vert K_\alpha^+f(z_1)- K_\alpha^+f(z_2)\right\vert\leq\int _{\mathbb{B}^d}\vert f\vert\left\vert\frac{1}{|1-\langle z_1,w \rangle|^\alpha}-\frac{1}{|1-\langle z_2,w \rangle|^\alpha}\right\vert dv(w)\leq \Vert f\Vert_p\cdot\varepsilon$$
whenever $z_1,z_2\in K$ and $\vert z_1-z_2\vert<\delta.$ It completes the proof.
\end{proof}

A subset in a Banach space is called precompact if the closure of the subset in the norm topology is compact. Obviously, an operator between two Banach spaces is compact if and only if the operator maps every bounded set to precompact one.  Suppose $\Omega\subset \mathbb{C}^d$ is an arbitrary  bounded domain. The following lemma  provides a criteria of precompactness in $ L^p(\Omega)$ with $1\leq p<\infty,$ see \cite[Theorem 2.33]{AdF} for more details.

\begin{lem}\cite{AdF} \label{AdF} Let $1\leq p<\infty$ and $K\subset L^p(\Omega).$ Suppose there exists a sequence $\{\Omega_j\}$ of subdomains of $\Omega$ having the following properties:
\begin{enumerate}
\item  $\Omega_j\subset\Omega_{j+1}.$
\item The set of restrictions to $\Omega_j$ of the functions in $K$ is precompact in  $L^p(\Omega_j)$ for each $j.$
\item For every $\varepsilon>0$ there exists a $j$ such that $$ \int_{\Omega-\Omega_j}\vert f\vert^pdv<\varepsilon,\quad\forall f \in K.$$
\end{enumerate}
Then $K$ is precompact in $ L^p(\Omega).$
\end{lem}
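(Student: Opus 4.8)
The plan is to show that $K$ is totally bounded in $L^p(\Omega)$; since $L^p(\Omega)$ is complete for $1\le p<\infty$, total boundedness is equivalent to precompactness, so this suffices. Fix $\varepsilon>0$. The first step uses condition (3): choose a single index $j$ for which
$$\int_{\Omega\setminus\Omega_j}\vert f\vert^{p}\,dv<(\varepsilon/2)^{p}\qquad\text{for every }f\in K,$$
so that the ``tail mass'' of the whole family is uniformly small. (Condition (1) ensures the same bound holds for every larger index, but one such $j$ is all we need.)

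Next I would invoke condition (2): the set $K_j:=\{\,f|_{\Omega_j}:f\in K\,\}$ is precompact, hence totally bounded, in $L^p(\Omega_j)$, so there are finitely many functions $g_1,\dots,g_N\in L^p(\Omega_j)$ whose $(\varepsilon/2)$-balls cover $K_j$. Extend each $g_i$ by zero to all of $\Omega$, obtaining $\widetilde{g}_i\in L^p(\Omega)$. For an arbitrary $f\in K$, pick $i$ with $\Vert f|_{\Omega_j}-g_i\Vert_{L^p(\Omega_j)}<\varepsilon/2$; splitting the norm over the disjoint pieces $\Omega_j$ and $\Omega\setminus\Omega_j$, on which $\widetilde{g}_i$ equals $g_i$ and $0$ respectively, gives
$$\Vert f-\widetilde{g}_i\Vert_{L^p(\Omega)}\le\Vert f-g_i\Vert_{L^p(\Omega_j)}+\Vert f\Vert_{L^p(\Omega\setminus\Omega_j)}<\tfrac{\varepsilon}{2}+\tfrac{\varepsilon}{2}=\varepsilon.$$
Hence $\{\widetilde{g}_1,\dots,\widetilde{g}_N\}$ is a finite $\varepsilon$-net for $K$; as $\varepsilon>0$ was arbitrary, $K$ is totally bounded and therefore precompact in $L^p(\Omega)$.

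I do not expect a genuine obstacle here: the argument is a clean $\varepsilon/2$ splitting. The only points that need a little care are the equivalence of ``precompact'' and ``totally bounded'' (which rests on completeness of $L^p(\Omega)$, valid because $p<\infty$) and the order of the two reductions --- condition (3) must be applied uniformly over the whole family $K$ \emph{before} extracting the finite net from condition (2), so that the two error terms are controlled independently and then summed.
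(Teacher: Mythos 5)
Your argument is correct, and it is essentially the standard proof of this criterion: the paper itself offers no proof here, simply citing \cite[Theorem 2.33]{AdF}, and your $\varepsilon/2$-splitting (uniform tail bound from (3), finite net from (2) extended by zero, total boundedness plus completeness of $L^p$ for $p<\infty$) is exactly the textbook route. The only step worth a word of justification is the inequality $\Vert f-\widetilde{g}_i\Vert_{L^p(\Omega)}\le\Vert f-g_i\Vert_{L^p(\Omega_j)}+\Vert f\Vert_{L^p(\Omega\setminus\Omega_j)}$, which follows either from $(a^p+b^p)^{1/p}\le a+b$ or from writing $f-\widetilde{g}_i=(f-g_i)\chi_{\Omega_j}+f\chi_{\Omega\setminus\Omega_j}$ and applying Minkowski's inequality.
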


The  Forelli-Rudin asymptotic estimate of integral for Bergman kernel on the unit ball  $\mathbb{B}^d$ is a fundamental result in function spaces and operator theory, see \cite[Propostion 1.4.10]{Rud} or \cite[Theorem 1.12]{Zhu1} for more details.  In the rest of this article, we will frequently use this estimate, for the sake of convenience, we state as follows.
\begin{lem} (Forelli-Rudin)\label{FR} Suppose $c$ is real and $t>-1$. Then the integral $$J_{c,t}(z)=\int_{\mathbb{B}^d}\frac{(1-\vert w\vert^2)^tdv(w)}{\vert 1-\langle z,w\rangle\vert ^{c+t}}, \quad z\in\mathbb{B}^d,$$
has the following asymptotic properties.
\begin{enumerate}
\item If $c < d+1$, then $J_{c,t}$ is  bounded on $\mathbb{B}^d.$
\item If $c = d+1,$ then
$$J_{c,t}(z)\sim -\log (1-\vert z\vert^2),\quad \vert z\vert \rightarrow{1}^-.$$
\item  If $c >d+1,$ then $$J_{c,t}(z)\sim (1-\vert z\vert^2)^{d+1-c},\quad \vert z\vert \rightarrow{1}^-.$$
\end{enumerate}
The notation $A(z)\sim B(z)$ means that the ratio $\frac{A(z)}{B(z)}$ has a positive finite limit as $\vert z\vert \rightarrow 1^-.$
\end{lem}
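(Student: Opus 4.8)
The plan is to reduce this classical estimate to the boundary behavior of a single power series in the radial variable. By the unitary invariance of both Lebesgue measure and $|1-\langle z,w\rangle|$, it suffices to take $z=re_1$ with $r=|z|\in[0,1)$ and $e_1=(1,0,\dots,0)$, so that $\langle z,w\rangle=r\bar w_1$ and $J_{c,t}(z)$ becomes a function $\phi(r)$ of $r$ alone; if $c+t\le 0$ the integrand is already bounded on $\overline{\mathbb{B}^d}$, so I may assume $b:=\tfrac{c+t}{2}>0$. Using the binomial series $(1-\zeta)^{-b}=\sum_{k\ge0}\frac{\Gamma(b+k)}{\Gamma(b)k!}\zeta^k$ for $|\zeta|<1$, I would write
$$\frac{1}{|1-r\bar w_1|^{c+t}}=\frac{1}{(1-r\bar w_1)^{b}}\cdot\frac{1}{(1-rw_1)^{b}}=\sum_{k,j\ge0}\frac{\Gamma(b+k)\Gamma(b+j)}{\Gamma(b)^2\,k!\,j!}\,r^{k+j}\,\bar w_1^{k}w_1^{j},$$
which converges uniformly for $w$ in a slightly larger ball once $r<1$, so integrating term by term against $(1-|w|^2)^t\,dv(w)$ is legitimate; integrating over the argument of $w_1$ annihilates every term with $k\ne j$, leaving
$$\phi(r)=\sum_{k\ge0}\Bigl(\frac{\Gamma(b+k)}{\Gamma(b)k!}\Bigr)^{2}c_k\,r^{2k},\qquad c_k:=\int_{\mathbb{B}^d}|w_1|^{2k}(1-|w|^2)^t\,dv(w)>0.$$

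Next I would evaluate $c_k$ and extract growth rates. Passing to polar coordinates reduces $c_k$ to a one-dimensional beta integral, giving $c_k=\dfrac{\Gamma(d+1)\Gamma(t+1)\Gamma(k+1)}{\Gamma(d+t+k+1)}$, and Stirling's formula gives $c_k\sim\Gamma(d+1)\Gamma(t+1)\,k^{-(d+t)}$, while $\bigl(\Gamma(b+k)/(\Gamma(b)k!)\bigr)^{2}\sim\Gamma(b)^{-2}k^{2b-2}=\Gamma(b)^{-2}k^{c+t-2}$. Hence the coefficient $a_k$ of $r^{2k}$ in $\phi$ is positive and satisfies $a_k\sim C\,k^{c-d-2}$ with $C=\Gamma(d+1)\Gamma(t+1)\Gamma(b)^{-2}>0$.

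Finally I would invoke the elementary facts about a power series $\sum_k a_k x^k$ with $a_k\ge0$ and $a_k\sim C k^{s}$: if $s<-1$ the series converges at $x=1$, so $\phi$ is bounded on $[0,1)$; if $s=-1$ then $\sum_k a_k x^k\sim C(-\log(1-x))$ as $x\to1^-$; and if $s>-1$ then $\sum_k a_k x^k\sim C\,\Gamma(s+1)(1-x)^{-(s+1)}$ as $x\to1^-$. Setting $x=r^{2}=|z|^2$ and $s=c-d-2$, the three alternatives $c<d+1$, $c=d+1$, $c>d+1$ yield exactly the three claimed conclusions, noting that $(1-|z|^2)^{-(s+1)}=(1-|z|^2)^{d+1-c}$ in the last case, and in each the relevant ratio tends to a finite positive limit. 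I expect the main obstacle to be this last (Tauberian) step, in particular the cases $s\ge -1$: I would prove these by sandwiching $\sum_k k^s r^{2k}$ between constant multiples of $\int_0^\infty u^{s}e^{-u(1-r^2)}\,du=\Gamma(s+1)(1-r^2)^{-(s+1)}$ using monotonicity of $u\mapsto u^{s}e^{-u\varepsilon}$, or else cite the standard Abelian/Tauberian statement; the remaining ingredients (rotation invariance, Stirling's formula, dominated convergence) are routine. As a cross-check one may instead split $\mathbb{B}^d$ into the nonisotropic region $\{|1-\langle z,w\rangle|\lesssim 1-|z|^2\}$ and its complement and estimate the denominator directly on each piece; this is more computational but gives the same conclusion.
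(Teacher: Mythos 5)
Your proposal is correct, and it is essentially the classical argument behind this lemma: the paper itself does not prove Lemma \ref{FR} but cites \cite[Proposition 1.4.10]{Rud} and \cite[Theorem 1.12]{Zhu1}, whose proof is exactly your route (unitary reduction to $z=re_1$, binomial expansion of $\vert 1-r\bar w_1\vert^{-(c+t)}$, termwise integration killing off-diagonal terms, evaluation of the moments $c_k$, and Stirling plus an Abelian estimate on the resulting series), and the same expansion reappears in the paper as the hypergeometric identity of Lemma \ref{hi1}. The only point to tidy is the final Abelian step (your comparison function $u^{s}e^{-u\varepsilon}$ is unimodal rather than monotone, and one should compare with $-\log x\sim 1-x$), but as you note this is a standard statement one may simply cite.
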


 To complete the proof of  Theorem 1, we also need  a result on the interpolation  of  compact operators. The following lemma is first proved by A. Krasnoselskii \cite{Kr}, see also \cite{CP} or \cite[ Theorem 3.10]{Kr1}.
 \begin{lem} \cite{CP,Kr,Kr1}\label{cpt}  Suppose that $1\leq p_1,p_2,q_1,q_2\leq\infty$ and $q_2\neq\infty.$  If a linear operator  $T$ such that $T :L^{p_1}\rightarrow L^{q_1}$ is bounded and $T :L^{p_2}\rightarrow L^{q_2}$ is compact, then $T :L^{p}\rightarrow L^{q}$ is compact, if   there exists a  $\theta\in (0,1)$ such that  $$\frac{1}{p}=\frac{\theta}{p_1}+\frac{1-\theta}{p_2}, \frac{1}{q}=\frac{\theta}{q_1}+\frac{1-\theta}{q_2}.$$
\end{lem}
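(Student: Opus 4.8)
The plan is to reduce the statement to the Riesz--Thorin interpolation theorem by exhibiting $T$, on the interpolated scale, as an operator-norm limit of finite-rank operators, carrying out the approximation on the \emph{range} side; this is available precisely because $q_2<\infty$. First I would note that boundedness is automatic: compactness implies boundedness, so $T$ is bounded $L^{p_1}\to L^{q_1}$ and $L^{p_2}\to L^{q_2}$, whence $T\colon L^p\to L^q$ is bounded by Riesz--Thorin (and $1/q\ge(1-\theta)/q_2>0$, so $q<\infty$). Then, on $(\mathbb B^d,dv)$ I would fix an increasing sequence of finite $\sigma$-algebras $\Sigma_1\subset\Sigma_2\subset\cdots$, each generated by a partition of $\mathbb B^d$ into finitely many sets of positive measure, with $\bigvee_k\Sigma_k$ the Borel $\sigma$-algebra modulo null sets, and let $P_k=\mathbb E[\,\cdot\mid\Sigma_k]$ be the corresponding conditional expectations. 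Each $P_k$ has finite rank, is a contraction on $L^r(\mathbb B^d)$ for every $1\le r\le\infty$ (Jensen's inequality, $v$ finite), and $P_kg\to g$ in $L^r$ for $g\in L^r$ when $1\le r<\infty$ (martingale convergence). In particular each $P_kT\colon L^p\to L^q$ is bounded with range in the finite-dimensional space $\mathrm{range}(P_k)$, hence is compact.

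The core step is to prove $\|P_kT-T\|_{L^p\to L^q}\to0$, for then $T\colon L^p\to L^q$ is a norm-limit of compact operators and so is itself compact. I would apply Riesz--Thorin to $(P_k-I)T$. On the one hand it is bounded $L^{p_1}\to L^{q_1}$ with norm at most $2\|T\|_{L^{p_1}\to L^{q_1}}$ uniformly in $k$, since $\|P_k-I\|\le2$ on $L^{q_1}$. On the other hand, using the standard fact that $R_kS\to0$ in operator norm whenever $S$ is compact, $R_k\to0$ strongly and $\sup_k\|R_k\|<\infty$ --- applied with $S=T\colon L^{p_2}\to L^{q_2}$ and $R_k=(P_k-I)|_{L^{q_2}}$, where the hypothesis $q_2<\infty$ is exactly what makes $(P_k-I)\to0$ strongly on $L^{q_2}$ --- one gets $\|(P_k-I)T\|_{L^{p_2}\to L^{q_2}}\to0$. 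Riesz--Thorin then yields $\|(P_k-I)T\|_{L^p\to L^q}\le\bigl(2\|T\|_{L^{p_1}\to L^{q_1}}\bigr)^{\theta}\,\|(P_k-I)T\|_{L^{p_2}\to L^{q_2}}^{\,1-\theta}\longrightarrow0$.

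I expect the only real subtlety to be the choice of where to approximate. Truncating $f$ on the domain side into its parts of small and large modulus is the naive first attempt, but the images of these two parts naturally live in $L^{q_2}$ and $L^{q_1}$ respectively and cannot be recombined in the intermediate space $L^q$; approximating $T$ from the range side avoids this because conditional expectations are simultaneously contractive on every $L^r$ and so interact well with both scales at once. (This gives an alternative to Krasnoselskii's original truncation argument; the remaining ingredients --- Riesz--Thorin, martingale convergence, and the ``compact times strongly null'' lemma --- are all classical.)
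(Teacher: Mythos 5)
Your argument is correct. Note, though, that the paper does not prove this lemma at all: it is quoted from the literature (Krasnoselskii, and Cobos--Peetre), so there is no internal proof to match. The classical Krasnoselskii route proves total boundedness of $T(B_{L^p})$ in $L^q$ directly: one truncates $f$ on the domain side into a piece controlled in $L^{p_2}$ and a piece controlled in $L^{p_1}$, and recombines on the range side through the H\"older-type inequality $\Vert g\Vert_q\leq\Vert g\Vert_{q_1}^{\theta}\Vert g\Vert_{q_2}^{1-\theta}$ applied to differences $Tf-Tf'$, using an $\varepsilon$-net of the compact image in $L^{q_2}$; so your closing remark that the domain-side truncation ``cannot be recombined'' is a bit unfair to that argument, although it in no way affects yours. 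What you do instead is approximate $T$ from the range side by the finite-rank operators $P_kT$ built from conditional expectations onto finite partitions (contractions on every $L^r$ of the finite measure space, strongly convergent to the identity on $L^r$ for $r<\infty$), kill $\Vert(P_k-I)T\Vert_{L^{p_2}\to L^{q_2}}$ by the standard ``compact composed with a uniformly bounded strongly null sequence'' lemma --- this is exactly where $q_2<\infty$ enters --- and then transfer the decay to the intermediate pair by Riesz--Thorin applied to $(P_k-I)T$, so that $T$ is an operator-norm limit of compact operators $L^p\to L^q$. All the ingredients check out (the conditional expectations are simultaneously $L^{q_1}$- and $L^{q_2}$-contractive, the exponent identities give $q<\infty$, and Riesz--Thorin applies to the complex $L^r$ spaces with endpoints allowed to be $\infty$), so what you gain is a short, self-contained proof that replaces the truncation bookkeeping by a clean functional-analytic limit argument, at the modest cost of invoking martingale convergence and Riesz--Thorin.
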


 {\noindent{\bf{Proof of  Theorem 1.}} The equivalence of $(1)\Leftrightarrow (2) \Leftrightarrow(3)\Leftrightarrow (5)$ is the main result of  \cite[Theorem 1]{DiW}, so it is enough to prove $(4)\Leftrightarrow (5).$ Note that the equivalence of (3) and (5), it implies that $(4)\Rightarrow (5),$ since compact operators are all bounded between two Banach spaces. Now we prove $(5)\Rightarrow (4),$ which means we need to show $K_\alpha^+$ is $L^p$-$L^q$ compact if  $K_\alpha^+$ is $L^p$-$L^q$ bounded. It is easy to see  that $1\leq q<p\leq \infty$ under  the  assumption in (5). By Lemma \ref{cpt}, the proof will be completed once
we  prove the following two conclusions:
 \begin{enumerate}
\item[(a)]  If $p=\infty,q<\frac{1}{\alpha-(d+1)},$ then $K_\alpha^+: L^p \rightarrow L^q$ is compact.
\item[(b)]  If $p>\frac{1}{(d+2)-\alpha },q=1,$  then $K_\alpha^+: L^p \rightarrow L^q$ is compact.
\end{enumerate}
Observe the operator $K_\alpha^+$ is adjoint by Fubini's theorem, combing this with the well known fact that an operator between two Banach spaces is compact if and only if its  adjoint is compact, we conclude that  conclusions (a) and (b) are in fact  equivalent. Consequently, the proof is completed if conclusion (a) is proved. Now we turn to prove the conclusion (a). Suppose $\{f_j\}$ is  an arbitrary bounded sequence in $L^\infty,$ without loss of generality, we can suppose that
   \begin{equation}\label{bds} \Vert f_j\Vert_\infty \leq 1,\quad j=1,2,\cdots.\end{equation}
   Denote the  bounded domain $B_j'$ by $B_j'=\{z\in\mathbb{C}^d:\vert z\vert<1-\frac{1}{j}\},j=1,2,\cdots.$ Clearly,   $B_j'$ is compactly contained in $\mathbb{B}^d$ and $B_j'\subset B_{j+1}'\subset \mathbb{B}^d,$ for every $j.$ We first prove that the  set of restrictions to $B_j'$ of the functions in $\{K_\alpha^+f_n\}$ is precompact in  $L^p(B_j')$ for each $j.$ In view of Lemma \ref{klp}, we know the functions in $\{K_\alpha^+f_n\}$ are all continuous on  $\mathbb{B}^d$ and uniformly  continuous  on every $\overline{B_j'}.$ Combing with the fact that the embedding $C(\overline{B_j'})\subset L^p(B_j')$ is continuous for every $j,$ it is enough to prove $\{(K_\alpha^+f_n)|_{B_j'}\}$ is precompact in $C(\overline{B_j'})$ for every $j.$
Note that (\ref{bds}) and the definition of $B_j',$ we have that
\begin{equation}\begin{split}\label{Kaf}
\Vert ( K_\alpha^+ f_n)|_{B_j'}\Vert_\infty=\sup_{z\in \overline{B_j'}}\vert K_\alpha^+ f_n\vert &=\sup_{z\in \overline{B_j'}}\left\vert\int_{\mathbb{B}^d}\frac{f_n(w)dv(w)}{\vert 1-\langle z,w\rangle\vert^\alpha}\right\vert\\\
 &\leq\Vert f_n\Vert_\infty\sup_{z\in \overline{B_j'}}\int_{\mathbb{B}^d}\frac{dv(w)}{\vert 1-\langle z,w\rangle\vert^\alpha}\\
&\leq j^\alpha\Vert f_n\Vert_\infty\\
&\leq j^\alpha,
\end{split}\end{equation} for every $j.$ The estimate (\ref{Kaf}) implies that  $\{(K_\alpha^+f_n)|_{B_j'}\}$ are bounded in $C(\overline{B_j'})$ for every $j.$ It follows from Lemma \ref{uf} that $\{(K_\alpha^+f_n)|_{B_j'}\}$ is equicontinuous on $\overline{B_j'}$ for every $j.$ Then  Arzel\`a-Ascoli theorem implies that $\{(K_\alpha^+f_n)|_{B_j'}\}$ is  precompact in $C(\overline{B_j'})$ for every $j.$ By the well known fact that, for $t\in \mathbb{R},$ $(1-\vert z\vert^2)^t \in L^1(\mathbb{B}^d)$ if and only if $t>-1,$
 we obtain  that, for any fixed $t>-1$ and for any $\varepsilon>0,$ there exists a $J>0$ satisfying \begin{equation}\label{ie} \int_{\mathbb{B}^d-B_j'}(1-\vert z\vert^2)^t dv<\varepsilon,\quad \forall j>J,\end{equation}  since  the absolute continuity of the integral and $\lim_{j\rightarrow\infty}v(\mathbb{B}^d-B_j')=0.$  From the assumption in (a) and $d+1<\alpha<d+2,$ we get that $0<q(\alpha-(d+1))<1,$ then by (\ref{ie}) we obtain that for any $\varepsilon>0,$ there exists a $J>0$ such that \begin{equation}\label{iee} \int_{\mathbb{B}^d-B_j'}(1-\vert z\vert^2)^{-q(\alpha-(d+1))} dv<\varepsilon,\quad \forall j>J.\end{equation}
Combing (\ref{iee}) with Lemma \ref{FR},  there exists a positive constant $C $  such that, for any $\varepsilon>0,$ there exists a $J>0$ satisfying
\begin{equation}\begin{split}
\int_{\mathbb{B}^d-B_j'}\vert K_\alpha^+f_n(z)\vert^qdv(z)&=\int_{\mathbb{B}^d-B_j'}\left\vert\int_{\mathbb{B}^d}\frac{f_n(w)dv(w)}{\vert 1-\langle z,w\rangle\vert^\alpha}\right\vert^qdv(z)\notag\\
&\leq \Vert f_n\Vert_\infty^q\int_{\mathbb{B}^d-B_j'}\left\vert\int_{\mathbb{B}^d}\frac{dv(w)}{\vert 1-\langle z,w\rangle\vert^\alpha}\right\vert^qdv(z)\\
&\leq C \Vert f_n\Vert_\infty^q \int_{\mathbb{B}^d-B_j'} (1-\vert z\vert^2)^{-q(\alpha-(d+1))} dv\\
&\leq C\varepsilon,\\
\end{split}\end{equation}
for any $j>J$ and $n=1,2,\cdots.$ Thus $\{K_\alpha^+f_n\}$ is precompact in $L^q$ by Lemma \ref{AdF}, it completes the proof.
\qed
 \begin{rem} Theorem 1 indicates  that Bergman operators $K_\alpha,K_\alpha^+ $ from $L^p$ into $L^q$ are compact if and only if they are bounded when $1\leq q<p\leq\infty.$ 
 In fact, this is quite expected, which can be seen from  Ando's theorem for a bounded integral  operator from $L^p$ into $L^q$ with $1\leq q<p\leq\infty,$  see \cite{An} or  \cite[Theorem 5.5, 5.13]{Kr1}.    Nevertheless, we give a direct proof here.

 \end{rem}

\section{The case of $\alpha\leq d+1$}

This section is  mainly  devoted to prove Theorem 2, which characterizes the $L^p$-$L^q$ compactness of $K_\alpha^+ $ for $0<\alpha\leq d+1.$  We  first recall some results on hypergeometric function theory. For complex numbers $\alpha,\beta,\gamma$ and complex variable $z,$ we use the classical notation $\tensor[_2]{F} {_1}(\alpha,\beta;\gamma;z) $  to denote
$$\tensor[_2]{F} {_1}(\alpha,\beta;\gamma;z) =\sum_{j=0}^{\infty}\frac{(\alpha)_j(\beta)_j}{j!(\gamma)_j}z^j,$$ with $\gamma\neq 0,-1,-2,\ldots,$  where $(\alpha)_j=\Pi_{k=0}^{j-1}(\alpha+k)$ is the Pochhammer for any complex number $\alpha.$ The following lemma calculates the
exact value of  the hypergeometric function at the point $z=1.$
\begin{lem}\cite[Section 2.8]{EMO} \label{hi2} If $Re(\gamma-\alpha-\beta)>0,$ then
$$\tensor[_2]{F} {_1}(\alpha,\beta;\gamma;1)=\frac{\Gamma(\gamma)\Gamma(\gamma-\alpha-\beta)}{\Gamma(\gamma-\alpha)\Gamma(\gamma-\beta)},$$ 
where $\Gamma$ is the usual Euler Gamma  function.
\end{lem}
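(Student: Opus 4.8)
\smallskip
\noindent\emph{Sketch of proof.} The plan is to deduce Gauss's summation formula from the Euler integral representation of $_2F_1$ together with an Abel-type limit at $z=1$, and then to remove the auxiliary restriction on the parameters by analytic continuation. Temporarily assume $\mathrm{Re}(\gamma)>\mathrm{Re}(\beta)>0$. For $\vert z\vert<1$, expand $(1-zt)^{-\alpha}=\sum_{j\ge0}\frac{(\alpha)_j}{j!}(zt)^j$, interchange sum and integral (legitimate by absolute convergence on $[0,1]$ for fixed $\vert z\vert<1$), and evaluate each term by the Beta integral $\int_0^1 t^{\beta+j-1}(1-t)^{\gamma-\beta-1}\,dt=\frac{(\beta)_j}{(\gamma)_j}\cdot\frac{\Gamma(\beta)\Gamma(\gamma-\beta)}{\Gamma(\gamma)}$; this yields
$$\int_0^1 t^{\beta-1}(1-t)^{\gamma-\beta-1}(1-zt)^{-\alpha}\,dt=\frac{\Gamma(\beta)\Gamma(\gamma-\beta)}{\Gamma(\gamma)}\,{}_2F_1(\alpha,\beta;\gamma;z).$$

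Next I would let $z\to1^-$ in this identity. By Stirling's formula the coefficients satisfy $\frac{(\alpha)_j(\beta)_j}{j!(\gamma)_j}=O\!\bigl(j^{\mathrm{Re}(\alpha+\beta-\gamma)-1}\bigr)$, so under the hypothesis $\mathrm{Re}(\gamma-\alpha-\beta)>0$ the defining series converges absolutely at $z=1$, and Abel's theorem gives that the right-hand side tends to $\frac{\Gamma(\beta)\Gamma(\gamma-\beta)}{\Gamma(\gamma)}\,{}_2F_1(\alpha,\beta;\gamma;1)$. On the left, the integrand is dominated near $t=1$ by the integrable function $t^{\mathrm{Re}\beta-1}(1-t)^{\mathrm{Re}(\gamma-\alpha-\beta)-1}$, so dominated convergence gives the limit $\int_0^1 t^{\beta-1}(1-t)^{\gamma-\alpha-\beta-1}\,dt=B(\beta,\gamma-\alpha-\beta)=\frac{\Gamma(\beta)\Gamma(\gamma-\alpha-\beta)}{\Gamma(\gamma-\alpha)}$. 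Equating the two limits and cancelling $\Gamma(\beta)$ produces ${}_2F_1(\alpha,\beta;\gamma;1)=\frac{\Gamma(\gamma)\Gamma(\gamma-\alpha-\beta)}{\Gamma(\gamma-\alpha)\Gamma(\gamma-\beta)}$ under the extra assumption $\mathrm{Re}(\gamma)>\mathrm{Re}(\beta)>0$.

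To obtain the stated generality I would then remove this extra assumption by analytic continuation: both sides are holomorphic functions of $(\alpha,\beta,\gamma)$ on the connected open set $\{\,\mathrm{Re}(\gamma-\alpha-\beta)>0,\ \gamma\notin\{0,-1,-2,\dots\}\,\}$ — the left since the series converges uniformly on compact subsets there, the right since $1/\Gamma$ is entire — and, agreeing on a nonempty open subset, they coincide throughout by the identity theorem; the symmetry $\alpha\leftrightarrow\beta$ of $_2F_1$ disposes of the remaining cases $\mathrm{Re}\beta\le0$. The step demanding the most care is precisely this cluster of limit exchanges at $z=1$ — the convergence of the series, the Abel passage, and the dominated-convergence passage in the integral — all of which rest on the single hypothesis $\mathrm{Re}(\gamma-\alpha-\beta)>0$; that is exactly why nothing weaker appears in the statement.

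An alternative that avoids the integral representation is Gauss's original argument: with $g(\gamma)={}_2F_1(\alpha,\beta;\gamma;1)$, substitute $z=1$ into the three-term contiguous relation
$$\gamma(\gamma-1)(z-1)\,{}_2F_1(\alpha,\beta;\gamma-1;z)+\gamma\bigl[\gamma-1-(2\gamma-\alpha-\beta-1)z\bigr]\,{}_2F_1(\alpha,\beta;\gamma;z)+(\gamma-\alpha)(\gamma-\beta)z\,{}_2F_1(\alpha,\beta;\gamma+1;z)=0,$$
noting that the first summand vanishes in the limit because of the factor $(z-1)$, which absorbs the possible $(1-z)^{\gamma-1-\alpha-\beta}$ singularity of the $\gamma-1$ term since $\mathrm{Re}(\gamma-\alpha-\beta)>0$. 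This gives the recursion $g(\gamma)=\frac{(\gamma-\alpha)(\gamma-\beta)}{\gamma(\gamma-\alpha-\beta)}\,g(\gamma+1)$; iterating it $N$ times, using $g(\gamma+N)\to1$, and recognizing the telescoping product as $\frac{\Gamma(\gamma)\Gamma(\gamma-\alpha-\beta)}{\Gamma(\gamma-\alpha)\Gamma(\gamma-\beta)}$ by Stirling then finishes the argument. Since the identity is classical and enters our later computations only as an input, one may simply invoke \cite[Section 2.8]{EMO}.
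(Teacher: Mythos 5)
Your proposal is correct, but note that the paper does not prove this lemma at all: it is quoted as a classical result (Gauss's summation theorem) with a bare citation to Erd\'elyi--Magnus--Oberhettinger--Tricomi, Section 2.8, and is then used as a black box in Lemma 3.2. So any actual proof is "extra" relative to the paper; yours supplies the two standard ones. The first route (Euler integral representation, termwise Beta integration, Abel's theorem on the series side and dominated convergence on the integral side, then removal of the auxiliary constraint $\mathrm{Re}\,\gamma>\mathrm{Re}\,\beta>0$ by the identity theorem in the parameters) is sound; the only points worth tightening are the domination step, where for $\mathrm{Re}\,\alpha\ge 0$ you should use $1-zt\ge 1-t$ to get the majorant $t^{\mathrm{Re}\beta-1}(1-t)^{\mathrm{Re}(\gamma-\alpha-\beta)-1}$ while for $\mathrm{Re}\,\alpha<0$ the factor $(1-zt)^{-\mathrm{Re}\alpha}$ is simply bounded by $1$, and the observation that the parameter region $\{\mathrm{Re}(\gamma-\alpha-\beta)>0,\ \gamma\notin\{0,-1,\dots\}\}$ stays connected after removing the hyperplanes $\gamma=-n$ (the final appeal to $\alpha\leftrightarrow\beta$ symmetry is then redundant). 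The second route is Gauss's original contiguous-relation argument; it is also correct, but as you note it silently uses the boundary behaviour $(1-z)^{\gamma-1-\alpha-\beta}$ of ${}_2F_1(\alpha,\beta;\gamma-1;z)$ as $z\to 1^-$, which must be established independently (e.g.\ by a crude coefficient estimate), and the telescoped product $\frac{(\gamma-\alpha)_N(\gamma-\beta)_N}{(\gamma)_N(\gamma-\alpha-\beta)_N}\to\frac{\Gamma(\gamma)\Gamma(\gamma-\alpha-\beta)}{\Gamma(\gamma-\alpha)\Gamma(\gamma-\beta)}$ together with $g(\gamma+N)\to 1$ indeed closes it. Either argument (or simply the citation, as the paper does) suffices for the role the lemma plays here, namely feeding the exact value (3.4)--(3.5) in Lemma 3.2.
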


Recall the definition of  the function $J_{\beta,\gamma}$ in Lemma \ref{FR}.   The following lemma is  not only  a restatement of Lemma \ref{FR}, but also establishes a  connection between the integration induced by Bergman kernel  and the  hypergeometric function. A function is called finite if it has finite value at every point of its domain.
\begin{lem}\label{hi1} (1)\cite{Rud} Suppose $\beta\in\mathbb{R}$ and $\gamma>-1,$ then
 \begin{equation}\label{jg} J_{\beta-\gamma,\gamma}(z)=\frac{\Gamma(1+d)\Gamma(1+\gamma)}{\Gamma(1+d+\gamma)}\tensor[_2]{F} {_1}(\frac{\beta}{2},\frac{\beta}{2};1+d+\gamma;|z|^2), \end{equation} for $z\in\mathbb{B}^d.$

(2)  $J_{\beta-\gamma,\gamma}$ is finite  on the closed ball $\overline{\mathbb{B}^d}$ if and only  if  $\beta<d+1+\gamma.$  Moreover, in this case, the identity (\ref{jg})  actually holds on the closed ball $\overline{\mathbb{B}^d}.$
\end{lem}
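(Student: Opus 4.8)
The plan is to treat the two parts separately but both via the power series expansion of the integrand. For part (1), I would start from the integral
\[
J_{\beta-\gamma,\gamma}(z)=\int_{\mathbb{B}^d}\frac{(1-|w|^2)^\gamma\,dv(w)}{|1-\langle z,w\rangle|^{\beta}},
\]
and expand $|1-\langle z,w\rangle|^{-\beta}=(1-\langle z,w\rangle)^{-\beta/2}(1-\overline{\langle z,w\rangle})^{-\beta/2}$ using the binomial series $(1-t)^{-s}=\sum_{k\ge 0}\frac{(s)_k}{k!}t^k$. Multiplying the two series and integrating term by term (justified by absolute convergence for $|z|<1$, since the series in $w$ converges uniformly on $\overline{\mathbb{B}^d}$ when $|z|<1$), the cross terms $\langle z,w\rangle^j\,\overline{\langle z,w\rangle}^k$ with $j\neq k$ integrate to zero by the rotation invariance of $dv$, leaving only the diagonal $j=k$. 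Then I would invoke the standard moment formula
\[
\int_{\mathbb{B}^d}|\langle z,w\rangle|^{2j}(1-|w|^2)^\gamma\,dv(w)
=\frac{\Gamma(1+d)\Gamma(1+\gamma)}{\Gamma(1+d+\gamma)}\cdot\frac{j!\,\Gamma(1+d+\gamma)}{\Gamma(1+d+\gamma+j)}\,|z|^{2j}
=\frac{\Gamma(1+d)\Gamma(1+\gamma)}{\Gamma(1+d+\gamma+j)/j!\cdots}|z|^{2j},
\]
(I will write this out carefully so the Pochhammer $(1+d+\gamma)_j$ appears in the denominator). Collecting the surviving terms, the coefficient of $|z|^{2j}$ becomes $\frac{\Gamma(1+d)\Gamma(1+\gamma)}{\Gamma(1+d+\gamma)}\cdot\frac{(\beta/2)_j(\beta/2)_j}{j!\,(1+d+\gamma)_j}$, which is exactly the claimed ${}_2F_1$ identity \eqref{jg}.

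For part (2), I would argue as follows. By Lemma \ref{FR} with $c=\beta-\gamma$ and $t=\gamma$ (note $c+t=\beta$), $J_{\beta-\gamma,\gamma}$ is bounded on $\mathbb{B}^d$ precisely when $c=\beta-\gamma<d+1$, i.e.\ $\beta<d+1+\gamma$; in the remaining cases ($\beta=d+1+\gamma$ or $\beta>d+1+\gamma$) the function blows up as $|z|\to 1^-$, so it is not even finite on all of $\overline{\mathbb{B}^d}$ (the radial limit is $+\infty$). This gives the "if and only if". For the final assertion, when $\beta<d+1+\gamma$ we have $\mathrm{Re}\big((1+d+\gamma)-\tfrac\beta2-\tfrac\beta2\big)=d+1+\gamma-\beta>0$, so by Lemma \ref{hi2} the hypergeometric series ${}_2F_1(\tfrac\beta2,\tfrac\beta2;1+d+\gamma;x)$ converges at $x=1$, hence converges uniformly on $[0,1]$; consequently both sides of \eqref{jg} are continuous on $\overline{\mathbb{B}^d}$, and since they agree on the open ball they agree on the closure by continuity.

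The main obstacle I anticipate is the term-by-term integration in part (1): one must be careful that, although the binomial series for $(1-\langle z,w\rangle)^{-\beta/2}$ converges only for $|\langle z,w\rangle|<1$ — which does hold for $w\in\overline{\mathbb{B}^d}$ when $|z|<1$ — the interchange of sum and integral needs either uniform convergence on $\overline{\mathbb{B}^d}$ (for fixed $|z|<1$) or an absolute-convergence (Fubini/Tonelli) argument on the doubled series; I would supply the uniform-convergence justification explicitly. A secondary point of care is getting the moment computation $\int_{\mathbb{B}^d}|\langle z,w\rangle|^{2j}(1-|w|^2)^\gamma\,dv(w)$ right, including the correct normalization of $dv$; this is a routine polar-coordinates plus Beta-function calculation, but the bookkeeping of Gamma factors is where sign or index errors would creep in, so I would present it as a clearly displayed intermediate identity rather than folding it silently into the main computation.
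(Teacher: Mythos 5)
Your part (1) is fine in outline and is essentially the standard derivation of Rudin's Proposition 1.4.10 (which the paper simply cites): expand the two binomial factors, discard the off-diagonal terms by rotation invariance, and evaluate the diagonal moments so that $(1+d+\gamma)_j$ appears in the denominator; provided you justify the interchange of sum and integral as you indicate, this is correct.

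The gap is in part (2), and it sits exactly at the boundary points, which are the real content of the lemma. For $\eta\in\mathbb{S}^d$ the value $J_{\beta-\gamma,\gamma}(\eta)$ is a separate integral, and the interior asymptotics of Lemma \ref{FR} say nothing about it as stated. In the "only if" direction, the blow-up of $J_{\beta-\gamma,\gamma}(z)$ as $\vert z\vert\to1^-$ does not by itself give $J_{\beta-\gamma,\gamma}(\eta)=+\infty$: Fatou only yields $J_{\beta-\gamma,\gamma}(\eta)\le\liminf_{z\to\eta}J_{\beta-\gamma,\gamma}(z)$, which is the wrong direction, so "the radial limit is $+\infty$, hence not finite on $\overline{\mathbb{B}^d}$" is a non sequitur unless you insert a bridge such as $\vert 1-\langle \eta,w\rangle\vert\le 2\vert 1-\langle r\eta,w\rangle\vert$, which gives $J_{\beta-\gamma,\gamma}(r\eta)\le 2^{\beta}J_{\beta-\gamma,\gamma}(\eta)$ for $\beta>0$ and then the conclusion. (Symmetrically, in the "if" direction boundedness on the open ball gives finiteness at $\eta$ only after an extra step, e.g.\ Fatou, though that one is easy.) More seriously, your proof of the identity (\ref{jg}) on $\overline{\mathbb{B}^d}$ rests on the assertion that "both sides are continuous on $\overline{\mathbb{B}^d}$": continuity of the right-hand side does follow from Lemma \ref{hi2} and the nonnegative coefficients, but continuity (indeed finiteness) of the left-hand side up to the boundary is precisely what must be proved, and any dominated-convergence justification of it needs an integrable majorant at $\eta$, i.e.\ already presupposes $J_{\beta-\gamma,\gamma}(\eta)<\infty$; as written the argument is circular. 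The paper sidesteps all of this by evaluating the boundary integral directly: by unitary invariance reduce to $\langle\eta,w\rangle=w_1$, integrate in polar coordinates via \cite[Lemma 1.8, 1.9]{Zhu} to obtain $J_{\beta-\gamma,\gamma}(\eta)=\frac{\Gamma(d+1)\Gamma(\gamma+1)}{\Gamma(d+\gamma+1)}\sum_{j\ge0}\frac{(\beta/2)_j^2}{j!\,(d+\gamma+1)_j}$, read off convergence iff $\beta<d+1+\gamma$ from Stirling, and identify the sum by Lemma \ref{hi2}. Your outline can be repaired by adding the comparison inequality above and a radial dominated-convergence step once boundary finiteness is secured, but these boundary arguments are missing as written.
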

 \begin{proof} (1) In this case, the identity (\ref{jg}) is a restatement of Lemma \ref{FR}, we refer the reader to \cite[Proposition 1.4.10]{Rud} for more details.

 (2) It suffices to prove $J_{\beta-\gamma,\gamma}$ is finite on the unit sphere $\mathbb{S}^d=\{ z\in \mathbb{C}^d:\vert z\vert =1\}$ if and only if  $\beta<d+1+\gamma.$ Due to  the unitary invariance of  the Lebesgue measure, we know that $$J_{\beta-\gamma,\gamma}(\eta)=\int_{\mathbb{B}^d}\frac{(1-\vert w\vert^2)^\gamma dv(w)}{\vert 1-\langle \eta,w\rangle\vert ^{\beta}}=\int_{\mathbb{B}^d}\frac{(1-\vert w\vert^2)^\gamma dv(w)}{\vert 1-w_1\vert ^{\beta}},$$  for any $ \eta\in\mathbb{S}^d.$ Then \cite[Lemma 1.8,1.9]{Zhu} imply that \begin{equation}\begin{split}\label{jbr}  J_{\beta-\gamma,\gamma}(\eta)
 &=2d\int_0^1r^{2d-1}(1-r^2)^\gamma dr\int_{\mathbb{S}^d} \frac{1}{\vert 1-r\xi_1\vert^\beta}d\sigma(\xi) \\
 &=2d\int_0^1r^{2d-1}(1-r^2)^\gamma  (d-1)\int_{\mathbb{B}^1} \frac{(1-\vert w\vert^2)^{d-2}}{\vert 1-rw\vert^\beta}dv_1(w)dr \\
 &= 2d\int_0^1r^{2d-1}(1-r^2)^\gamma \tensor[_2]{F} {_1}(\frac{\beta}{2},\frac{\beta}{2};d;r^2)dr\\
 &=2d\sum_{j=0}^{\infty}\frac{(\frac{\beta}{2})_j(\frac{\beta}{2})_j}{j!(d)_j} \int_0^1r^{2j+2d-1}(1-r^2)^\gamma dr\\
 &=\frac{\Gamma(d+1)\Gamma(\gamma+1)}{\Gamma(d+\gamma+1)}\sum_{j=0}^{\infty}\frac{(\frac{\beta}{2})_j(\frac{\beta}{2})_j}{j!(d+\gamma +1)_j}\\
 \end{split}\end{equation}
  for any $ \eta\in\mathbb{S}^d,$ where $d\sigma$ and $dv_1$ are  normalized Lebesgue measures on $\mathbb{S}^d$ and  $\mathbb{B}^1,$  respectively.  Note that $\Gamma(s+j)= (s)_j\Gamma(s)$  for all $s\in\mathbb{C}$ except  the nonpositive integers. Then Stirling's formula and (\ref{jbr}) yield that  $J_{\beta-\gamma,\gamma}$ is finite on $\mathbb{S}^d$ if and only if  $\beta<d+1+\gamma.$ From Lemma \ref{hi2} and (\ref{jbr}), we kwon that, if $\beta<d+1+\gamma,$ then
   \begin{equation}\begin{split}\label{jbrr}  J_{\beta-\gamma,\gamma}(\eta) &=\frac{\Gamma(d+1)\Gamma(\gamma+1)}{\Gamma(d+\gamma+1)}\sum_{j=0}^{\infty}\frac{(\frac{\beta}{2})_j(\frac{\beta}{2})_j}{j!(d+\gamma +1)_j}\\
     &= \frac{\Gamma(d+1)\Gamma(\gamma+1)}{\Gamma(d+\gamma+1)} \tensor[_2]{F} {_1}(\frac{\beta}{2},\frac{\beta}{2};d+\gamma+1;1)\\
  &=\frac{\Gamma(d+1)\Gamma(\gamma+1)\Gamma(d+\gamma+1-\beta)}{\Gamma^2(d+\gamma+1-\frac{\beta}{2})}\\
   \end{split}\end{equation}
    for any $ \eta\in\mathbb{S}^d,$ which means that the identity (\ref{jg}) also holds on the closed ball $\mathbb{S}^d.$ It completes the proof.
  \end{proof}
From Lemma \ref{hi1}, we know that $J_{\beta-\gamma,\gamma}$  is a  radial  function on $\mathbb{B}^d.$ Moreover, when $\beta<d+1+\gamma,$ $J_{\beta-\gamma,\gamma}$ is  increasing on $ \overline{\mathbb{B}^d}$ in the following sense,  \begin{equation}\label{jin} J_{\beta-\gamma,\gamma}(z_1)< J_{\beta-\gamma,\gamma}(z_2), \end{equation} whenever  $\vert z_1\vert <\vert z_2\vert\leq 1,$ since all Taylor coefficients of the hypergeometric function in (\ref{jg}) are positive.
Now, we introduce the following auxiliary function $I_\alpha$ for every $\alpha<d+1.$ The function $I_\alpha(r,z)$ on $[0,1)\times \overline{\mathbb{B}^d} $ is denoted  by
$$I_\alpha(r,z)=\int_{r\leq \vert w\vert<1}\frac{1}{\vert 1-\langle z,w\rangle\vert^{\alpha}}dv(w).$$
Since $\alpha<d+1,$  it follows by (\ref{jbrr}) and  (\ref{jin})  that
 \begin{equation}\begin{split}\label{ijga} I_\alpha(r,z)&\leq J_{\alpha,0}(z)\\
 &\leq\sum_{j=0}^\infty\left( \frac{\Gamma(j+\frac{\alpha}{2})}{\Gamma(\frac{\alpha}{2})}\right)^2\frac{\Gamma(d+1)}{\Gamma(j+1)\Gamma(j+d+1)}\\
 &= \frac{\Gamma(d+1)\Gamma(d+1-\alpha)}{\Gamma^2(d+1-\frac{\alpha}{2})},
   \end{split}\end{equation}
 for any $(r,z)\in [0,1)\times \overline{\mathbb{B}^d},$ which means that $I_\alpha$ is finite on $[0,1)\times \overline{\mathbb{B}^d}.$ Moreover, $I_\alpha$ is   increasing on $ [0,1)\times \overline{\mathbb{B}^d}$ in the following sense.
\begin{lem}\label{Lia} Suppose $r\in [0,1),$  then $$I_\alpha(r,z_1)\leq I_\alpha(r,z_2),$$ whenever $z_1,z_2\in \overline{\mathbb{B}^d}$ and $\vert z_1\vert \leq \vert z_2\vert.$
\end{lem}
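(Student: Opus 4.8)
The plan is to reduce the claim to the monotonicity statement \eqref{jin} for the integrals $J_{\beta-\gamma,\gamma}$ over the \emph{full} ball, which already encodes the fact that the relevant hypergeometric series has nonnegative coefficients. The key observation is that $I_\alpha(r,z)$ is an integral over the annular region $r\le\vert w\vert<1$, and this region is unitarily invariant (invariant under $w\mapsto Uw$ for every unitary $U$). Consequently $I_\alpha(r,\cdot)$ is a radial function of $z$: for any $\eta\in\mathbb S^d$ and any $z$ with $\vert z\vert=\vert z'\vert$ one has $I_\alpha(r,z)=I_\alpha(r,z')$, by the change of variables $w\mapsto Uw$ with $U$ a unitary carrying $z$ to $z'$ and the invariance of $dv$. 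So it suffices to prove that $t\mapsto I_\alpha(r,t\eta)$ is nondecreasing on $[0,1]$ for a fixed $\eta\in\mathbb S^d$, i.e. to compare $I_\alpha(r,z_1)$ and $I_\alpha(r,z_2)$ when $z_1=\vert z_1\vert\eta$, $z_2=\vert z_2\vert\eta$ lie on a common ray.

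First I would expand the kernel. Writing $s=\vert z\vert$, the same reasoning used to derive \eqref{jbr} gives, after integrating over spheres and using \cite[Lemma 1.8, 1.9]{Zhu} together with the binomial series for $(1-rw)^{-\alpha/2}(1-\bar r\bar w)^{-\alpha/2}$,
\begin{equation}
I_\alpha(r,s\eta)=2d\sum_{j=0}^\infty\frac{(\frac{\alpha}{2})_j(\frac{\alpha}{2})_j}{j!\,(d)_j}\,s^{2j}\int_r^1 t^{2j+2d-1}\,dt.\notag
\end{equation}
All the coefficients in this series are strictly positive (they are the Taylor coefficients of ${}_2F_1(\frac{\alpha}{2},\frac{\alpha}{2};d;\,\cdot\,)$ times the positive numbers $\int_r^1 t^{2j+2d-1}dt$), and the series converges on $\overline{\mathbb B^d}$ in the variable $s\in[0,1]$ precisely because $\alpha<d+1$, exactly as in the estimate \eqref{ijga}. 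Since every term $s^{2j}\int_r^1 t^{2j+2d-1}dt$ is a nondecreasing function of $s\in[0,1]$, the whole sum is nondecreasing in $s$. Hence $\vert z_1\vert\le\vert z_2\vert$ forces $I_\alpha(r,z_1)\le I_\alpha(r,z_2)$, as claimed.

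The only genuinely delicate point is the justification of the termwise sphere integration and the interchange of summation with the radial integral $\int_r^1$; this is handled exactly as in the proof of Lemma \ref{hi1}, since all terms are nonnegative (Tonelli) and the resulting series is finite by \eqref{ijga}. Everything else is bookkeeping: the unitary invariance reducing to the radial case, and the elementary observation that a convergent power series with nonnegative coefficients is monotone on $[0,1)$. An alternative, even shorter route avoiding the explicit expansion is to note that for $w$ in the annulus and $z$ on a ray, $\vert 1-\langle z,w\rangle\vert$ is not monotone pointwise, so one cannot argue under the integral sign directly; the series expansion is what converts the problem into a manifestly monotone quantity, so I would keep that as the main argument rather than attempt a pointwise comparison.
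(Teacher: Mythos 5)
Your proposal is correct and follows essentially the same route as the paper: reduce by unitary invariance to $\langle z,w\rangle\mapsto \vert z\vert w_1$, expand the kernel into a power series in $\vert z\vert^2$ with nonnegative coefficients (justified by Tonelli and finiteness from \eqref{ijga}), and conclude monotonicity termwise; your coefficients $2d\,\tfrac{((\alpha/2)_j)^2}{j!(d)_j}\int_r^1 t^{2j+2d-1}\,dt$ agree with the paper's $((\alpha/2)_j)^2\,\tfrac{\Gamma(d+1)(1-r^{2(j+d)})}{\Gamma(j+1)\Gamma(j+d+1)}$. The only cosmetic difference is that you integrate over spheres first and invoke the ${}_2F_1$ identity, while the paper expands inside the annulus integral and uses the moment formula for $\int_{\mathbb{S}^d}\vert\xi_1\vert^{2j}d\sigma$.
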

\begin{proof} From (\ref{ijga}), we know that  $I_\alpha$ is finite on $[0,1)\times \overline{\mathbb{B}^d}.$ Now we calculate its exact value. It follows  from   \cite[Lemma 1.8,1.11]{Zhu} and the unitary invariance of  the Lebesgue measure  that
 \begin{equation}\begin{split}\label{alz}  I_\alpha(r,z)
&= \int_{r\leq \vert w\vert<1}\frac{1}{\vert 1-\vert z\vert w_1\vert^{\alpha}}dv(w)\\
 &=\int_{r\leq \vert w\vert<1}\sum_{j=0}^\infty\left( \frac{\Gamma(j+\frac{\alpha}{2})}{\Gamma(\frac{\alpha}{2})\Gamma(j+1)}\right)^2\vert z\vert^{2j}\vert w_1^j\vert^{2} dv(w)\\
 &=\sum_{j=0}^\infty\left( \frac{\Gamma(j+\frac{\alpha}{2})}{\Gamma(\frac{\alpha}{2})\Gamma(j+1)}\right)^2\vert z\vert^{2j}\cdot2d\int_{r}^1t^{2d+2j-1}dt\int_{\mathbb{S}^d}\vert \xi_1^j\vert^{2} d\sigma(\xi)\\
&=\sum_{j=0}^\infty\left( \frac{\Gamma(j+\frac{\alpha}{2})}{\Gamma(\frac{\alpha}{2})}\right)^2\frac{\Gamma(d+1)(1-r^{2(j+d)})}{\Gamma(j+1)\Gamma(j+d+1)}\vert z\vert^{2j} \\\end{split}\end{equation}
for any $z\in\overline{\mathbb{B}^d}.$
It leads to the desired result since all coefficients of the power series expansion about $\vert z\vert$ in (\ref{alz}) are positive.
\end{proof}
\begin{prop} \label{kcom} If $0<\alpha<d+1,$  then $K_\alpha^+:L^\infty \rightarrow L^q$ is compact for any  $ 1\leq q\leq\infty.$
\end{prop}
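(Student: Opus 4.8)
\emph{Proof proposal.} The plan is to strengthen the conclusion and then deduce all cases at once. Namely, I would first prove that $K_\alpha^+$ maps $L^\infty(\mathbb{B}^d)$ \emph{compactly} into $C(\overline{\mathbb{B}^d})$; granting this, for every $q\in[1,\infty]$ the operator $K_\alpha^+\colon L^\infty\to L^q$ is the composition of this compact map with the continuous embedding $C(\overline{\mathbb{B}^d})\hookrightarrow L^q(\mathbb{B}^d)$ (legitimate because $v$ is a finite measure), hence compact. So it suffices to treat $q=\infty$, and there the natural tool is the Arzel\`a-Ascoli theorem applied on the \emph{closed} ball $\overline{\mathbb{B}^d}$ rather than merely on compact subsets of $\mathbb{B}^d$.

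Let $\{f_j\}$ be a sequence in the unit ball of $L^\infty$. Since $\alpha<d+1$, Lemma~\ref{hi1}(2) shows $J_{\alpha,0}$ is finite on $\overline{\mathbb{B}^d}$, so $K_\alpha^+f_j(z)=\int_{\mathbb{B}^d}f_j(w)\vert 1-\langle z,w\rangle\vert^{-\alpha}\,dv(w)$ is defined for every $z\in\overline{\mathbb{B}^d}$ and, by~(\ref{ijga}), $\vert K_\alpha^+f_j(z)\vert\leq J_{\alpha,0}(z)\leq \frac{\Gamma(d+1)\Gamma(d+1-\alpha)}{\Gamma^2(d+1-\frac{\alpha}{2})}$; thus $\{K_\alpha^+f_j\}$ is uniformly bounded on $\overline{\mathbb{B}^d}$. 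For equicontinuity up to the sphere I would use the auxiliary function $I_\alpha$: given $\varepsilon>0$, Lemma~\ref{Lia} gives $\sup_{z\in\overline{\mathbb{B}^d}}I_\alpha(r,z)=I_\alpha(r,\eta)$ for $\eta\in\mathbb{S}^d$, and $I_\alpha(r,\eta)\to0$ as $r\to1^-$ by absolute continuity of the integral (its $r=0$ value $J_{\alpha,0}(\eta)$ being finite), so we may fix $r\in(0,1)$ with $\sup_z I_\alpha(r,z)<\varepsilon$. On the compact set $\overline{\mathbb{B}^d}\times\overline{B_r}$ the kernel $(z,w)\mapsto\vert 1-\langle z,w\rangle\vert^{-\alpha}$ is continuous, hence uniformly continuous; choose $\delta>0$ so that $\bigl\vert\,\vert 1-\langle z_1,w\rangle\vert^{-\alpha}-\vert 1-\langle z_2,w\rangle\vert^{-\alpha}\bigr\vert<\varepsilon$ for all $\vert w\vert\leq r$ whenever $z_1,z_2\in\overline{\mathbb{B}^d}$ and $\vert z_1-z_2\vert<\delta$. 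Splitting the integral over $B_r$ and over $\mathbb{B}^d\setminus B_r$ and using $\Vert f_j\Vert_\infty\leq1$, the first piece is $\leq\varepsilon$ and each tail is $\leq I_\alpha(r,z_i)<\varepsilon$, so $\vert K_\alpha^+f_j(z_1)-K_\alpha^+f_j(z_2)\vert\leq3\varepsilon$ uniformly in $j$. Arzel\`a-Ascoli then yields a subsequence of $\{K_\alpha^+f_j\}$ converging in $C(\overline{\mathbb{B}^d})$, which is exactly what is needed.

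The step I expect to be the main obstacle is precisely this boundary equicontinuity: Lemma~\ref{uf} only handles compact subsets of the \emph{open} ball, so one genuinely has to control the contribution of $\{w:\vert w\vert\geq r\}$ uniformly in both $z$ and $f$, and this is where the hypothesis $\alpha<d+1$ is essential (it is what makes $J_{\alpha,0}$ finite on $\overline{\mathbb{B}^d}$, hence $I_\alpha(r,z)\to0$ uniformly) together with the monotonicity of $I_\alpha$ from Lemma~\ref{Lia}. Everything else is a standard compactness-via-Arzel\`a-Ascoli argument plus the trivial fact that the identity $C(\overline{\mathbb{B}^d})\to L^q(\mathbb{B}^d)$ is bounded for all $q$.
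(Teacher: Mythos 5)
Your proposal is correct, and it follows the same global strategy as the paper: reduce to showing compactness into the sup-norm topology (the paper phrases this as ``it suffices to prove $L^\infty\to L^\infty$ compactness''), establish a uniform bound via the finiteness of $J_{\alpha,0}$ on $\overline{\mathbb{B}^d}$ (valid exactly because $\alpha<d+1$), prove equicontinuity on the closed ball, and invoke Arzel\`a--Ascoli. Where you differ is in the mechanics of the equicontinuity step, and your route is genuinely simpler: you split the \emph{integration variable} $w$ into the inner ball $\overline{B_r}$, where the kernel is uniformly continuous in $(z,w)$, and the thin shell $\{r\le\vert w\vert<1\}$, whose contribution is controlled uniformly in $z$ by the monotonicity of $I_\alpha$ (Lemma \ref{Lia}) together with $I_\alpha(r,\eta)\to0$ as $r\to1^-$ (absolute continuity, since $J_{\alpha,0}(\eta)<\infty$). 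The paper instead first proves continuity of $K_\alpha^+f$ up to the boundary at each $\eta\in\mathbb{S}^d$, upgrades this to a statement uniform in $\eta$ via unitary invariance of the measure, and then splits the \emph{evaluation points} $z_1,z_2$ into a boundary layer $C_{\delta'/2}$ and an interior ball $B''_{1-\delta'/2}$, handling the two cases separately with a triangle inequality through a boundary point. Your decomposition of the $w$-domain avoids that case analysis and the symmetry argument, at no loss of rigor; note only that your equicontinuity estimate also serves (as it must, for Arzel\`a--Ascoli) to show each $K_\alpha^+f_j$ is itself continuous on $\overline{\mathbb{B}^d}$, which the paper establishes as a separate preliminary step, and that your final composition with the embedding $C(\overline{\mathbb{B}^d})\hookrightarrow L^q$ is exactly the paper's implicit reduction.
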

\begin{proof} It suffices to prove that $K_\alpha:L^\infty \rightarrow L^\infty$ is compact. It is clear that $K_\alpha^+ f$ is continuous on the open ball $\mathbb{B}^d$ for any $f\in L^\infty$ by Lemma \ref{klp}. From Lemma \ref{hi1}, (\ref{jbrr}) and (\ref{jin}), we obtain that $K_\alpha^+ f(\eta)$ exists  and $$\vert K_\alpha^+ f(\eta)\vert \leq\Vert f\Vert_\infty \frac{\Gamma(d+1)\Gamma(d+1-\alpha)}{\Gamma^2(d+1-\frac{\alpha}{2})}$$ for any $\eta\in \mathbb{S}^d.$  We now turn to prove that $K_\alpha^+ f$ is continuous on $\overline{\mathbb{B}^d}.$ It suffices to prove that  $K_\alpha^+f$  on $\mathbb{S}^d.$ Thus we need only to prove that, for any $\eta\in \mathbb{S}^d$ and for any point sequence $\{ z_n\}$ in $\mathbb{B}^d $ satisfying   $z_n\rightarrow \eta,$ then  $K_\alpha^+ f(z_n)\rightarrow K_\alpha^+ f(\eta)$ as $n\rightarrow \infty.$ By Lemma \ref{hi1} and  Lemma \ref{jin}, we have

 \begin{equation}\begin{split}\label{kiie}\vert K_\alpha^+ f(z)\vert \leq \Vert f\Vert_\infty J_{\alpha,0}(z)
 \leq \Vert f\Vert_\infty J_{\alpha,0}(\eta)
 = \Vert f\Vert_\infty \frac{\Gamma(d+1)\Gamma(d+1-\alpha)}{\Gamma^2(d+1-\frac{\alpha}{2})},
 \end{split}  \end{equation}
 for any $z\in \mathbb{B}^d.$ The absolute continuity of the integral implies that, for any $\varepsilon>0,$ there exists a $0<\delta<1$, satisfying  \begin{equation}\label{iif}\int_{F}\frac{dv(w)}{\vert 1-\langle \eta,w\rangle\vert^\alpha}\leq \frac{\varepsilon}{4},\end{equation}  whenever $v(F)<\delta.$ Denote $F_\delta=\{z\in \mathbb{B}^d: \sqrt[d]{1-\frac{\delta}{2}}<\vert z\vert<1\}.$ Note that $v(F_\delta)=\frac{\delta}{2}<\delta$  and $$\frac{1}{\vert 1-\langle z_n,w\rangle\vert^\alpha}\rightarrow \frac{1}{\vert 1-\langle \eta ,w\rangle\vert^\alpha}~\text{uniformly~on }~\mathbb{B}^d \setminus F_\delta,\text{~as}~n\rightarrow \infty.$$
Then there exists a $N>0$ such that, for any $n>N,$ $$\int_{\mathbb{B}^d\setminus F_\delta}\left\vert\frac{1}{\vert 1-\langle z_n,w\rangle\vert^\alpha}-\frac{1}{\vert 1-\langle \eta,w\rangle\vert^\alpha}\right\vert dv(w)\leq \frac{\varepsilon}{2}.$$
Combing this with Lemma \ref{Lia} and (\ref{iif}), we conclude that, for any $n>N,$
 \begin{equation}\begin{split}\label{keps} \vert K_\alpha^+ f(z_n)-K_\alpha^+ f(\eta)\vert&\leq \Vert f\Vert_\infty\int_{\mathbb{B}^d\setminus F_\delta}\left\vert\frac{1}{\vert 1-\langle z_n,w\rangle\vert^\alpha}-\frac{1}{\vert 1-\langle \eta,w\rangle\vert^\alpha}\right\vert dv(w) \\
&~~~~~~~~~+\Vert f\Vert_\infty\int_{F_\delta}\left\vert\frac{1}{\vert 1-\langle z_n,w\rangle\vert^\alpha}-\frac{1}{\vert 1-\langle \eta,w\rangle\vert ^\alpha}\right\vert dv(w)\\
&\leq \Vert f\Vert_\infty\int_{\mathbb{B}^d\setminus F_\delta}\left\vert\frac{1}{\vert 1-\langle z_n,w\rangle\vert ^\alpha}-\frac{1}{\vert 1-\langle \eta,w\rangle\vert ^\alpha}\right\vert dv(w) \\
&~~~~~~~~~+2\Vert f\Vert_\infty\int_{F_\delta}\frac{1}{\vert 1-\langle \eta,w\rangle\vert^\alpha} dv(w)\\
&\leq  \Vert f\Vert_\infty\frac{\varepsilon}{2}+2 \Vert f\Vert_\infty\frac{\varepsilon}{4}\\
&= \varepsilon \Vert f\Vert_\infty.\\
\end{split}  \end{equation}
It completes the proof that $K_\alpha^+f$ is continuous on  the closed ball $\overline{\mathbb{B}^d}$ for any $f\in L^\infty.$  Now we prove that, for any bounded sequence  in $L^\infty,$ there exists a subsequence satisfying that its image under $K_\alpha^+ $ is convergent in $L^\infty.$ Suppose that $\{f_n\}$ is a bounded sequence in $L^\infty,$ then we have that $\{K_\alpha^+ f_n\}$ is in $C(\overline{\mathbb{B}^d})$ and $\{K_\alpha^+ f_n\}$ is uniformly bounded by (\ref{kiie}).  Now we prove that $\{K_\alpha^+ f_n\}$ is also equicontinuous.
 From (\ref{keps}), we know that \begin{equation}\label{limv}\lim_{\mathbb{B}^d\ni z\rightarrow\eta} \int_{\mathbb{B}^d}\left\vert\frac{1}{\vert 1-\langle z,w\rangle\vert^\alpha}-\frac{1}{\vert 1-\langle \eta,w\rangle\vert ^\alpha}\right\vert dv(w)=0,\end{equation} for arbitrary fixed $\eta\in\mathbb{S}^d.$ Combing (\ref{limv}) with the unitary invariance of Lebesgue measure and the symmetry of the unit ball, we have that,  for any $\varepsilon>0,$ there exists a $0<\delta'<1,$ satisfying that  \begin{equation}\label{limee}\int_{\mathbb{B}^d}\left\vert\frac{1}{\vert 1-\langle z,w\rangle\vert^\alpha}-\frac{1}{\vert 1-\langle \eta,w\rangle\vert^\alpha}\right\vert dv(w)\leq\frac{\varepsilon}{2}\end{equation} whenever $z\in \mathbb{B}^d,\eta\in\mathbb{S}^d$ and $\vert z-\eta\vert<\delta'.$ Denote $B_{1-\frac{\delta'}{2}}''=\{z\in \mathbb{C}^d: \vert z\vert \leq1-\frac{\delta'}{2}\}$ and $C_{\frac{\delta'}{2}}=\{z\in \mathbb{C}^d: 1-\frac{\delta'}{2}<\vert z\vert \leq1\}.$ Then the closed ball $\overline{\mathbb{B}^d}$ has the following decomposition,  \begin{equation}\label{dec}\overline{\mathbb{B}^d}=B_{1-\frac{\delta'}{2}}''\cup C_{\frac{\delta'}{2}} \text{~and~} B_{1-\frac{\delta'}{2}}''\cap C_{\frac{\delta'}{2}} =\emptyset.\notag\end{equation} Since the function  $\frac{1}{\vert 1-\langle z,w\rangle\vert^\alpha}$  is uniformly continuous on the compact set $B_{1-\frac{\delta'}{2}}''\times\overline{\mathbb{B}^d},$  there exists a $0<\delta''<1$ such that \begin{equation}\label{limw}\left\vert\frac{1}{\vert 1-\langle z_1,w\rangle\vert^\alpha}-\frac{1}{\vert 1-\langle z_2,w\rangle\vert^\alpha}\right\vert\leq\varepsilon,\end{equation}     whenever $(z_1,w),(z_2,w)\in B_{1-\frac{\delta'}{2}}''\times\overline{\mathbb{B}^d}$ and $\vert z_1-z_2\vert<\delta''.$ Take $\delta'''=\min\{\frac{\delta'}{2},\delta''\}.$ Now we prove that, for any $z_1,z_2\in \overline{\mathbb{B}^d}$ such that $\vert z_1-z_2\vert<\delta''',$ then
 \begin{equation}\label{limepp}\int_{\mathbb{B}^d}\left\vert\frac{1}{\vert 1-\langle z_1,w\rangle\vert ^\alpha}-\frac{1}{\vert 1-\langle z_2,w\rangle\vert^\alpha}\right\vert dv(w)\leq\varepsilon.\end{equation}
In fact, there are two cases need to be considered.  The first case is $z_1\in C_{\frac{\delta'}{2}}$ or  $z_2\in C_{\frac{\delta'}{2}}.$ Without loss of generality, we can assume that $z_1\in C_{\frac{\delta'}{2}},$ then there exists
an $\eta \in \mathbb{S}^d$ satisfying that $\vert z_1-\eta\vert<\delta'''\leq\frac{\delta'}{2}.$ Thus, obviously, the triangle inequality implies that $\vert z_2-\eta\vert\leq\vert z_2-z_1\vert +\vert z_1-\eta\vert<\delta'.$ This  together with (\ref{limee}) implies that \begin{equation}\begin{split}\label{limeef}&\int_{\mathbb{B}^d}\left\vert\frac{1}{\vert1-\langle z_1,w\rangle\vert^\alpha}-\frac{1}{\vert 1-\langle z_2,w\rangle\vert^\alpha}\right\vert dv(w)\notag\\
&\leq\int_{\mathbb{B}^d}\left\vert\frac{1}{\vert 1-\langle z_1,w\rangle\vert^\alpha}-\frac{1}{\vert 1-\langle \eta,w\rangle\vert^\alpha}\right\vert dv(w)\\&~\vspace{0.2cm}+\int_{\mathbb{B}^d}\left\vert\frac{1}{\vert 1-\langle \eta, w\rangle\vert^\alpha}-\frac{1}{\vert 1-\langle z_2,w\rangle\vert^\alpha}\right\vert dv(w)\\&\leq \varepsilon
\end{split}\end{equation} The second case is $z_1,z_2\in B_{1-\frac{\delta'}{2}}''.$ From (\ref{limw}), it implies that \begin{equation}\begin{split}\label{limeeff}\int_{\mathbb{B}^d}\left\vert\frac{1}{\vert 1-\langle z_1,w\rangle\vert^\alpha}-\frac{1}{\vert  1-\langle z_2,w\rangle\vert^\alpha}\right\vert dv(w)\leq \varepsilon  \int_{\mathbb{B}^d}dv= \varepsilon.\notag\end{split}\end{equation}
It proves (\ref{limepp}). Combing (\ref{limepp})  with $$\vert K_\alpha^+ f_n(z_1)-K_\alpha^+ f_n(z_2)\vert\leq\Vert f_n\Vert_\infty\int_{\mathbb{B}^d}\left\vert\frac{1}{\vert 1-\langle z_1,w\rangle\vert^\alpha}-\frac{1}{\vert 1-\langle z_2,w\rangle\vert^\alpha}\right\vert dv(w)$$  follows that $\{K_\alpha^+ f_n\}$ is equicontinuous. Then Arzel\`a-Ascoli theorem implies that $\{K_\alpha^+ f_n\}$  has a convergency subsequence in the supremum norm (or is precompact). That finishes the proof.
\end{proof}

\begin{cor}\label{kcom1} If $0<\alpha<d+1,$ then  $K_\alpha^+: L^p\rightarrow L^1$ is compact for any $1\leq p\leq \infty.$

\end{cor}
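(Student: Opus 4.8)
The plan is to deduce the corollary from Proposition \ref{kcom} together with the $L^1$–$L^\infty$ duality and the interpolation Lemma \ref{cpt}, so that essentially no new kernel estimate is required. Two of the three ranges of $p$ are almost immediate. First, taking $q=1$ in Proposition \ref{kcom} already gives that $K_\alpha^+\colon L^\infty\to L^1$ is compact, which disposes of the endpoint $p=\infty$.

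For the endpoint $p=1$ I would argue by taking adjoints, exactly as in the proof of Theorem 1. Since the kernel $|1-\langle z,w\rangle|^{-\alpha}$ is symmetric in its two variables (because $|1-\langle w,z\rangle|=|1-\langle z,w\rangle|$), Fubini's theorem shows that $K_\alpha^+$ is its own formal adjoint with respect to the pairing $(f,g)\mapsto\int_{\mathbb{B}^d}fg\,dv$. As $(L^1)^\ast=L^\infty$, the Banach-space adjoint of $K_\alpha^+\colon L^1\to L^1$ is $K_\alpha^+\colon L^\infty\to L^\infty$, which is compact by Proposition \ref{kcom}. Invoking the classical fact that an operator between Banach spaces is compact if and only if its adjoint is compact, we conclude that $K_\alpha^+\colon L^1\to L^1$ is compact; in particular it is bounded on $L^1$.

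It remains to cover $1<p<\infty$, where I would apply Lemma \ref{cpt}. From the two previous steps the operator $K_\alpha^+\colon L^1\to L^1$ is bounded and $K_\alpha^+\colon L^\infty\to L^1$ is compact, with target exponent $1\neq\infty$; taking $\theta=1/p\in(0,1)$ interpolates the source exponents between $1$ and $\infty$ while keeping the target fixed at $1$, so Lemma \ref{cpt} yields that $K_\alpha^+\colon L^p\to L^1$ is compact. Combining the three ranges of $p$ finishes the proof.

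The substantive content is entirely contained in Proposition \ref{kcom}; the only point that needs a little care is the endpoint $p=1$, since the Krasnoselskii interpolation of Lemma \ref{cpt} produces only interior exponents. That is exactly why the self-adjointness of $K_\alpha^+$ (via Fubini) is brought in — the same device already used in the proof of Theorem 1 — and I do not anticipate any genuine obstacle beyond correctly organizing this assembly.
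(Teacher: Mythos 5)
Your argument is correct, and it runs on the same two ingredients as the paper — Proposition \ref{kcom} and the self-adjointness of $K_\alpha^+$ under the pairing $\int fg\,dv$ — but it is organized more elaborately than necessary. The paper's proof is a one-liner: for any $1\leq p<\infty$ the adjoint of $K_\alpha^+\colon L^p\to L^1$ is $K_\alpha^+\colon L^\infty\to L^{p'}$, which is compact for \emph{every} target exponent $p'$ by Proposition \ref{kcom}, so Schauder's theorem gives the conclusion for all $p$ at once (and $p=\infty$ is the case $q=1$ of the proposition). You only dualize at the endpoint $p=1$ and then reach the intermediate exponents through Krasnoselskii interpolation (Lemma \ref{cpt}); this works — the hypotheses are met, since $q_2=1\neq\infty$ and $\theta=1/p$ gives exactly $L^p\to L^1$ — but the interpolation step buys nothing that duality does not already give. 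One small point of logical hygiene: before speaking of ``the Banach-space adjoint of $K_\alpha^+\colon L^1\to L^1$'' you need to know this operator is bounded on $L^1$; compactness cannot be the source of that boundedness, as your parenthetical ``in particular it is bounded on $L^1$'' suggests. The boundedness is immediate from Fubini and Lemma \ref{FR}(1) (the kernel is symmetric and $J_{\alpha,0}$ is bounded when $\alpha<d+1$), or from the boundedness results of \cite{DiW}, so this is a presentational slip rather than a gap, but the hypothesis should be stated before the adjoint is invoked.
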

\begin{proof}  It follows from Lemma \ref{kcom} and the fact that $K_\alpha^+ $ is adjoint.
\end{proof}

Now, recall  the definition of the fractional radial differential operator $R^{s,t}$ on $\textrm{Hol}(\mathbb{B}^d).$ For any two real parameters $s$ and $t$ with the property that neither $d + s $ nor $d + s + t$ is a negative integer, the invertible operator $R^{s,t}$ is given by
  $$R^{s,t}f(z)=\sum_{n=0}^{\infty}\frac{\Gamma(d+1+s)\Gamma(d+1+n+s+t)}{\Gamma(d+1+s+t)\Gamma(d+1+n+s)}f_n(z),$$ for any $f=\sum_{n=0}^{\infty}f_n\in \textrm{Hol}(\mathbb{B}^d)$  with homogeneous expansion.  From   \cite[Proposition 1.2.6]{Rud}, we know  every holomorphic function $f\in \textrm{Hol}(\mathbb{B}^d)$ has a global power series expansion, thus the definition is well-defined on $\textrm{Hol}(\mathbb{B}^d).$ In fact, it can be checked by the direct calculation that the invertible operator of  $R^{s,t}$ is just $R^{s+t,-t}.$ Be careful that  the  invertible operator here is unnecessarily continuous. Recall that   $A^p=\textrm{Hol}(\mathbb{B}^d)\cap L^p$ is the $p$-integrable  Bergman space on $\mathbb{B}^d$  for $1\leq p\leq \infty.$

 \begin{lem}\label{kr} Suppose $\alpha\in\mathbb{R}$ satisfying $\alpha$ is not a nonpositive integer and $1\leq p\leq \infty,$  then the following holds on $A^p$ $$K_\alpha=R^{0,\alpha-d-1}. $$
 \end{lem}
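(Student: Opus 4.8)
The plan is to verify the identity on the monomials $z^n$ (equivalently, on the homogeneous components $f_n$), since both sides are linear and every $f \in A^p \subset \textrm{Hol}(\mathbb{B}^d)$ has a global homogeneous expansion $f = \sum_n f_n$. First I would compute $K_\alpha z^n$ explicitly. Expanding $(1-\langle z,w\rangle)^{-\alpha} = \sum_{k\geq 0} \frac{(\alpha)_k}{k!}\langle z,w\rangle^k$ and using the multinomial expansion of $\langle z,w\rangle^k$ together with the standard formula for $\int_{\mathbb{B}^d} w^m \bar{w}^{m'}\,dv(w)$ (which vanishes unless $m=m'$ and otherwise equals $\frac{d!\,m!}{(d+|m|)!}$), one finds that $K_\alpha z^n = c_{n}\, z^n$ for a scalar $c_n$ depending only on $\alpha$, $d$, and $|n|$. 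Carrying out the bookkeeping — only the term $k=|n|$ survives, and the multinomial coefficients recombine — gives $c_n = \frac{(\alpha)_{|n|}\,d!\,|n|!}{|n|!\,(d+|n|)!} \cdot \frac{|n|!}{\text{(appropriate factor)}}$; after simplification this is $c_n = \frac{\Gamma(d+1)\Gamma(\alpha+|n|)}{\Gamma(\alpha)\Gamma(d+1+|n|)}$. (I would double-check the constant by testing $n=0$, which must give $c_0 = K_\alpha 1 = J_{\alpha,0}(0) = 1$, consistent with $\frac{\Gamma(d+1)\Gamma(\alpha)}{\Gamma(\alpha)\Gamma(d+1)}=1$.)

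Next I would compute $R^{0,\alpha-d-1} z^n$ directly from its definition: with $s=0$, $t=\alpha-d-1$, and $f_n = z^n$ homogeneous of degree $|n|$,
\begin{equation}
R^{0,\alpha-d-1} z^n = \frac{\Gamma(d+1)\,\Gamma(\alpha+|n|)}{\Gamma(\alpha)\,\Gamma(d+1+|n|)}\, z^n .\notag
\end{equation}
This is exactly $c_n z^n$, so $K_\alpha z^n = R^{0,\alpha-d-1} z^n$ for every multi-index $n$. Here the hypothesis that $\alpha$ is not a nonpositive integer is exactly what guarantees $\Gamma(\alpha)$ and $\Gamma(d+1+s+t)=\Gamma(\alpha)$ are finite and nonzero, so that $R^{0,\alpha-d-1}$ is defined (and $c_n \neq 0$); note $d+s=d$ is never a negative integer, so the other side condition on $R^{s,t}$ is automatic.

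Finally I would pass from monomials to general $f \in A^p$. The subtlety is that convergence of $\sum_n f_n$ to $f$ holds in $\textrm{Hol}(\mathbb{B}^d)$ (uniformly on compact subsets), and I must justify interchanging $K_\alpha$ with this sum. I would fix $z \in \mathbb{B}^d$, choose $\vert z\vert < \rho < 1$, and note that the homogeneous expansion converges uniformly on $\{|w|\leq \rho'\}$ for suitable $\rho' <1$; combined with the local power-series/dominated-convergence argument already used in the proof of Lemma \ref{klp} (the kernel has a global power series expansion on a bidisk containing $\{z\}\times\overline{\mathbb{B}^d}$), this lets me write $K_\alpha f(z) = \sum_n K_\alpha f_n(z) = \sum_n c_n f_n(z)$. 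The right-hand side is by definition $R^{0,\alpha-d-1}f(z)$, and since $R^{0,\alpha-d-1}$ maps $\textrm{Hol}(\mathbb{B}^d)$ into itself this series converges in $\textrm{Hol}(\mathbb{B}^d)$; it also agrees with $K_\alpha f \in A^p$ pointwise, hence as elements of $A^p$. The main obstacle is this last interchange-of-summation step — making the convergence rigorous uniformly enough to move $K_\alpha$ inside — but it is handled by the same compact-exhaustion and uniform-convergence device as Lemma \ref{klp}, so no new ideas are needed. The computation of $c_n$ is the only genuinely computational part, and it is routine via the Forelli–Rudin-type moment formula.
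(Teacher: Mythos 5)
Your proposal is correct and follows essentially the same route as the paper: expand the kernel in powers of $\langle z,w\rangle$, use the moment/orthogonality formulas on the ball to see that $K_\alpha$ acts on each degree-$n$ homogeneous component by the scalar $\frac{\Gamma(d+1)\Gamma(\alpha+n)}{\Gamma(\alpha)\Gamma(d+1+n)}$, and match this with the definition of $R^{0,\alpha-d-1}$ (with the non-nonpositive-integer hypothesis entering exactly through $\Gamma(\alpha)$, as you note). The only cosmetic difference is that you verify the identity on monomials first and then interchange $K_\alpha$ with the homogeneous expansion of $f$, whereas the paper performs that interchange directly on a general $f\in A^p$ via polar coordinates, Zhu's formula (1.21) and dominated convergence, so no genuinely new ideas separate the two arguments.
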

  \begin{proof} Since every holomorphic function $f\in  \textrm{Hol}(\mathbb{B}^d)$ has a global power series expansion, we can suppose $f=\sum_{n=0}^{\infty}f_n\in A^p$ with the homogeneous    expansion.  Then the dominated convergence theorem  and formula (1.21) in \cite{Zhu} imply that 
     \begin{equation}\begin{split}\label{kass}
     K_\alpha f(z)&=\int _{\mathbb{B}^d}\frac{f(w)}{(1-\langle z,w\rangle)^\alpha}dv(w)\\
                         &=\sum_{n=0}^{\infty} \frac{\Gamma(n+\alpha)}{\Gamma(n+1)\Gamma(\alpha)}\int _{\mathbb{B}^d}f(w)\langle z,w\rangle^ndv(w)\\
                          &=\sum_{n=0}^{\infty} \frac{\Gamma(n+\alpha)}{\Gamma(n+1)\Gamma(\alpha)}2d\int _0^1r^{2d-1}dr \sum_{k=0}^\infty r^{k+n} \int_{\mathbb{S}^d}f_k(\xi)\langle z,\xi \rangle^n d\sigma(\xi)\\
                          &=\sum_{n=0}^{\infty} \frac{\Gamma(n+\alpha)}{\Gamma(n+1)\Gamma(\alpha)}2d\int _0^1r^{2d-1}dr  \int_{\mathbb{S}^d}f_n(r\xi)\langle z,r\xi \rangle^n d\sigma(\xi)\\
                           &=\sum_{n=0}^{\infty} \frac{\Gamma(n+\alpha)}{\Gamma(n+1)\Gamma(\alpha)}\int f_n(w) \langle z,w \rangle^n dv(w)\\
                          &= \sum_{n=0}^{\infty} \frac{\Gamma(d+1)\Gamma(\alpha+n)}{\Gamma(\alpha)\Gamma(d+1+n)}f_n(z),
 \end{split}\end{equation}
 for any $z\in\mathbb{B}^d.$
    It leads to the desired result.
\end{proof}

    \begin{lem}\cite[Proposition 5]{ZZ}\label{ZhZ} Suppose $s, t$ are real parameters such that neither $n + s$ nor $n + s + t$ is a negative integer. Then, for any nonnegative integer $N,$ $$R^{s,t} \frac{1}{( 1-\langle z,w\rangle)^{d+1+s+N}}= \frac{h(\langle z,w\rangle) } {( 1-\langle z,w\rangle)^{d+1+s+N+t}},$$
where $h$ is a certain one-variable polynomial of degree $N$. In particular, $h\equiv 1$ if $N=0.$
\end{lem}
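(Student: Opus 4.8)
The statement to prove is Lemma~\ref{ZhZ}, namely that $R^{s,t}$ acting on $(1-\langle z,w\rangle)^{-(d+1+s+N)}$ produces $h(\langle z,w\rangle)(1-\langle z,w\rangle)^{-(d+1+s+N+t)}$ with $h$ a degree-$N$ polynomial, and $h\equiv 1$ when $N=0$.

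My plan is to reduce everything to a one-variable computation in the scalar $x=\langle z,w\rangle$, since $R^{s,t}$ acts diagonally on homogeneous expansions and the kernel $(1-x)^{-c}$ has the explicit binomial expansion $(1-x)^{-c}=\sum_{n\ge 0}\frac{\Gamma(n+c)}{\Gamma(c)\,n!}\,x^n$. Treating $\langle z,w\rangle^n$ as the $n$-th homogeneous component (in $z$), the definition of $R^{s,t}$ gives
\begin{equation}\notag
R^{s,t}\frac{1}{(1-\langle z,w\rangle)^{c}}=\sum_{n=0}^\infty \frac{\Gamma(d+1+s)\,\Gamma(d+1+n+s+t)}{\Gamma(d+1+s+t)\,\Gamma(d+1+n+s)}\cdot\frac{\Gamma(n+c)}{\Gamma(c)\,n!}\,\langle z,w\rangle^n,
\end{equation}
with $c=d+1+s+N$. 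The first step is therefore to insert $c=d+1+s+N$ and simplify the ratio of Gamma functions: the factor $\Gamma(d+1+n+s)$ in the denominator cancels against $\Gamma(n+c)=\Gamma(n+d+1+s+N)$ in the numerator up to a rising-factorial polynomial $(d+1+n+s)_N$ of degree $N$ in $n$.

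The second step is to recognize the resulting coefficient. After cancellation the $n$-th coefficient is a constant times $\frac{\Gamma(d+1+n+s+t)}{n!}\cdot(d+1+n+s)_N$, and $\frac{\Gamma(d+1+n+s+t)}{n!}$ is (up to a constant) the $n$-th Taylor coefficient of $(1-x)^{-(d+1+s+t)}$. Writing $(d+1+n+s)_N=\prod_{j=0}^{N-1}(n+d+1+s+j)$ as a polynomial in $n$ of degree $N$, and using the elementary fact that multiplying a power series $\sum a_n x^n$ by a polynomial in $n$ is the same as applying a differential operator of the form $p(x\,\frac{d}{dx})$ — or, more concretely, that $n\,a_n x^n$ relates to $x\frac{d}{dx}$ and that $x\frac{d}{dx}$ applied to $(1-x)^{-\mu}$ gives $\mu\,x(1-x)^{-\mu-1}$ — one sees that the whole series sums to $\frac{h(x)}{(1-x)^{d+1+s+N+t}}$ for some polynomial $h$ of degree at most $N$, and that the leading behavior forces $\deg h=N$ exactly. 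The cleanest way to pin down $h$ is to identify the series as a Gauss hypergeometric ${}_2F_1$ evaluated with a shift, or simply to argue termwise: each factor $(n+d+1+s+j)$ bumps the exponent of $(1-x)^{-1}$ up by one and multiplies by a polynomial, so after $N$ such factors the exponent is $d+1+s+N+t$ and the numerator is a polynomial of degree $N$.

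The $N=0$ case is immediate: then $(d+1+n+s)_0=1$, there is no polynomial factor, and the coefficient collapses exactly to $\frac{\Gamma(d+1+s)\,\Gamma(d+1+n+s+t)}{\Gamma(d+1+s+t)\,\Gamma(d+1+s)}\cdot\frac{1}{n!}\cdot\frac{\Gamma(d+1+s)}{\Gamma(d+1+s)}$ — i.e. precisely the Taylor coefficient of $(1-\langle z,w\rangle)^{-(d+1+s+t)}$ — so $h\equiv 1$. The main obstacle I anticipate is purely bookkeeping: keeping the normalizing constants straight through the Gamma-function cancellations and confirming that the degree of $h$ is exactly $N$ (not merely $\le N$), for which one checks that the top coefficient of the polynomial $(d+1+n+s)_N$ in $n$ does not get annihilated when converting the polynomial-in-$n$ multiplication into the $(1-x)$-shift. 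A reference to \cite{ZZ} or a short induction on $N$ (peeling off one factor $(n+d+1+s+N-1)$ at a time and using $x\frac{d}{dx}$) disposes of this cleanly, so I would present the induction rather than grind the closed form.
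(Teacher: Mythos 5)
The paper offers no proof of this lemma at all: it is quoted directly from Zhao--Zhu \cite{ZZ}, so the only comparison to make is with the standard computation, which is exactly what you carry out, and your argument is sound in substance. Expanding $(1-\langle z,w\rangle)^{-(d+1+s+N)}=\sum_{n\ge0}\frac{\Gamma(n+d+1+s+N)}{\Gamma(d+1+s+N)\,n!}\langle z,w\rangle^{n}$, applying $R^{s,t}$ termwise to the homogeneous components, and cancelling via $\Gamma(n+d+1+s+N)=\Gamma(n+d+1+s)\,(n+d+1+s)_N$ is correct. For the resummation, a cleaner route than the $x\frac{d}{dx}$ device is to write the degree-$N$ polynomial as $(n+d+1+s)_N=\sum_{k=0}^{N}c_k\,(n+\mu)_k$ with $\mu=d+1+s+t$; since $\frac{\Gamma(n+\mu)}{n!}(n+\mu)_k=\frac{\Gamma(n+\mu+k)}{n!}$ resums to $\Gamma(\mu+k)(1-x)^{-\mu-k}$, one gets immediately $h(x)=\sum_{k=0}^{N}c_k\Gamma(\mu+k)(1-x)^{N-k}$, and the $N=0$ case collapses to $h\equiv1$ as you say.

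The one weak spot is the step you defer: your proposed check that the top coefficient ``does not get annihilated,'' so that $\deg h=N$ exactly, does not go through in general. In the expansion above the coefficient of $x^{N}$ in $h$ is $(-1)^{N}c_0\Gamma(\mu)$, and $c_0$ is $(n+d+1+s)_N$ evaluated at $n=-\mu$, i.e. $c_0=(-t)_N$, which vanishes precisely when $t\in\{0,1,\dots,N-1\}$; for instance $t=0$ gives $R^{s,0}=\mathrm{id}$, so $h\equiv1$ for every $N$. What your computation honestly yields is $\deg h\le N$ together with $h(1)=\Gamma(\mu+N)\neq0$ (so the pole order is exactly $d+1+s+N+t$), with degree exactly $N$ only when $(-t)_N\neq0$. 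This is a defect of the quoted statement rather than of your method, and it is harmless for the paper: in the only application, Proposition \ref{kfcpt}, all that is used is that $h_s$ is a polynomial, hence bounded on $\overline{\mathbb{B}^d}$, for which $\deg h\le N$ suffices. If you present the induction on $N$ you sketch, state the conclusion as ``a polynomial of degree at most $N$, nonvanishing at $1$'' or add the hypothesis $(-t)_N\neq0$ for the exact-degree claim.
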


   \begin{prop}\label{kfcpt} Suppose $0<\alpha\leq d+1$ and $1<p, q<\infty,$ if $K_\alpha^+:L^p \rightarrow L^q$ is compact, then  $\frac{1}{q}> \frac{1}{p}+ \frac{\alpha}{d+1}-1.$
\end{prop}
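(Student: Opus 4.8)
The plan is to prove the contrapositive by a test-function argument: assuming $\tfrac1q\le\tfrac1p+\tfrac{\alpha}{d+1}-1$, I will produce a bounded, weakly null sequence in $L^p$ whose images under $K_\alpha^+$ do not tend to $0$ in $L^q$, which contradicts compactness. First I fix an exponent $t>\max\{\tfrac{d+1}{p},\,d+1-\alpha\}$, so that $tp>d+1$ and $t+\alpha>d+1$, and set $s=t-\tfrac{d+1}{p}>0$. For $a\in\mathbb{B}^d$ I consider
$$f_a(w)=\frac{(1-|a|^2)^s}{|1-\langle w,a\rangle|^{t}},\qquad w\in\mathbb{B}^d.$$
Each $f_a$ is bounded on $\mathbb{B}^d$ since $|1-\langle w,a\rangle|\ge 1-|a|>0$, and by the Forelli--Rudin estimate (Lemma \ref{FR}) one has $\int_{\mathbb{B}^d}|1-\langle w,a\rangle|^{-tp}\,dv(w)\asymp(1-|a|^2)^{d+1-tp}$; since $sp=tp-(d+1)$ this yields $\|f_a\|_p^p\asymp1$, so $\{f_a\}_{a\in\mathbb{B}^d}$ is uniformly bounded in $L^p$. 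Moreover $f_a\to0$ pointwise on $\mathbb{B}^d$ as $|a|\to1^-$, because $(1-|a|^2)^s\to0$ while $|1-\langle w,a\rangle|$ stays bounded below for each fixed $w$.

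The decisive step is a uniform lower bound for $\|K_\alpha^+f_a\|_q$. I would localize to the Bergman metric ball $D(a,\rho)$ of a fixed radius $\rho$, on which $|1-\langle z,w\rangle|\asymp|1-\langle a,w\rangle|$ uniformly for $z\in D(a,\rho)$ and $w\in\overline{\mathbb{B}^d}$, while $v(D(a,\rho))\asymp(1-|a|^2)^{d+1}$ (these are standard facts about $\mathbb{B}^d$; see \cite{Zhu}). Combining this comparison with Lemma \ref{FR} (now using $t+\alpha>d+1$) gives, for every $z\in D(a,\rho)$,
\begin{align*}
K_\alpha^+f_a(z)&=(1-|a|^2)^s\int_{\mathbb{B}^d}\frac{dv(w)}{|1-\langle w,a\rangle|^{t}\,|1-\langle z,w\rangle|^{\alpha}}\\
&\asymp(1-|a|^2)^s\int_{\mathbb{B}^d}\frac{dv(w)}{|1-\langle w,a\rangle|^{t+\alpha}}\asymp(1-|a|^2)^{\,s+(d+1)-t-\alpha}.
\end{align*}
Consequently
$$\|K_\alpha^+f_a\|_q^q\ \ge\ \int_{D(a,\rho)}|K_\alpha^+f_a(z)|^q\,dv(z)\ \gtrsim\ (1-|a|^2)^{\,(d+1)+q(s+(d+1)-t-\alpha)}.$$
Since $s=t-\tfrac{d+1}{p}$ one has $s+(d+1)-t-\alpha=(d+1)\bigl(1-\tfrac{\alpha}{d+1}-\tfrac1p\bigr)$, so the exponent above equals $q(d+1)\bigl(\tfrac1q+1-\tfrac{\alpha}{d+1}-\tfrac1p\bigr)$, which is $\le0$ exactly under the hypothesis $\tfrac1q\le\tfrac1p+\tfrac{\alpha}{d+1}-1$. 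Hence $\|K_\alpha^+f_a\|_q\gtrsim1$ uniformly in $a\in\mathbb{B}^d$.

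To finish, I pick any sequence $a_j\in\mathbb{B}^d$ with $|a_j|\to1^-$. Then $\{f_{a_j}\}$ is bounded in $L^p$ and converges to $0$ almost everywhere; since $1<p<\infty$, $L^p(\mathbb{B}^d)$ is reflexive, and a standard argument (every subsequence has a weakly convergent further subsequence, whose weak limit is forced to be the a.e.\ limit $0$ by uniform integrability) shows $f_{a_j}\rightharpoonup0$ weakly in $L^p$. A compact operator maps weakly null sequences to norm-null sequences, so compactness of $K_\alpha^+:L^p\to L^q$ would force $\|K_\alpha^+f_{a_j}\|_q\to0$, contradicting the uniform lower bound. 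Therefore $\tfrac1q>\tfrac1p+\tfrac{\alpha}{d+1}-1$, as claimed. I expect the main obstacle to be the uniform-in-$a$ control of the two-kernel integral defining $K_\alpha^+f_a$; the resolution is to restrict to a fixed Bergman ball, where $|1-\langle z,w\rangle|$ and $|1-\langle a,w\rangle|$ are comparable, so that the integral collapses to a single Forelli--Rudin integral and the exponent bookkeeping balances precisely at the Hardy--Littlewood--Sobolev borderline $\tfrac1q=\tfrac1p+\tfrac{\alpha}{d+1}-1$.
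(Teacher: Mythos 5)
Your proposal is correct, and it reaches the conclusion by a genuinely different mechanism than the paper. Both arguments share the same skeleton: a family of test functions concentrating at the boundary, normalized in $L^p$, shown to be weakly null, and the fact that a compact operator sends weakly null sequences to norm-null ones; in both, Forelli--Rudin (Lemma \ref{FR}) does the exponent bookkeeping, and both cover the borderline case since the negated hypothesis includes equality. The difference lies in how the uniform lower bound on $\Vert K_\alpha^+ (\cdot)\Vert_q$ is produced. The paper takes holomorphic test functions $H_z$, feeds $\vert R^{\alpha-d-1,d+1-\alpha}H_z\vert$ into $K_\alpha^+$, and gets its lower bound algebraically: $K_\alpha^+(\vert f\vert)\geq\vert K_\alpha f\vert$ together with the operator identities $K_\alpha=R^{0,\alpha-d-1}$ on Bergman spaces (Lemma \ref{kr}) and $R^{0,\alpha-d-1}R^{\alpha-d-1,d+1-\alpha}=\mathrm{Id}$ on bounded holomorphic functions (via Lemma \ref{ZhZ}), so that compactness forces $\Vert H_z\Vert_q\rightarrow0$, which Forelli--Rudin rules out. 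You instead work directly with the positive operator $K_\alpha^+$ and nonnegative test functions $f_a$, obtaining a pointwise lower bound for $K_\alpha^+f_a$ on a Bergman metric ball $D(a,\rho)$ via the standard comparisons $\vert 1-\langle z,w\rangle\vert\asymp\vert 1-\langle a,w\rangle\vert$ for $z\in D(a,\rho)$, $w\in\overline{\mathbb{B}^d}$, and $v(D(a,\rho))\asymp(1-\vert a\vert^2)^{d+1}$ (Zhu's book, which you should cite explicitly since the present paper never states these). Your route is more elementary and self-contained on the operator-theoretic side (no fractional radial differentiation, and it treats $K_\alpha^+$ intrinsically rather than through $K_\alpha$), at the cost of importing the Bergman-ball geometry; the paper's route buys a byproduct (a new proof of the boundedness necessity in \cite{DiW}, as noted in its remark) and stays within the toolkit it has already set up. Your weak-nullity verification (reflexivity of $L^p$, $p>1$, plus a.e. convergence and uniform integrability on a finite measure space) is also sound, and parallels the paper's use of the bounded-in-$L^p$/vanishing-in-$L^1$ criterion.
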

\begin{proof} For every $z\in \mathbb{B}^d,$ denote the holomorphic function $H_z$ on $\mathbb{B}^d$ by \begin{equation}\label{Hz} H_z(w)=\frac{(1-\vert z\vert^2)^{\frac{s}{q}}}{(1-\langle w,z\rangle)^{\frac{s}{q}+\frac{d+1}{p}+\alpha-d-1}},\quad w\in \mathbb{B}^d,\end{equation}
where $s$ is a positive parameter satisfying  $\frac{s}{q}-(1-\frac{1}{p})(d+1)$ is a large enough positive integer. Clearly $H_z$ is bounded holomorphic on $\mathbb{B}^d$ for each $z\in\mathbb{B}^d.$
We first prove  $\vert R^{\alpha-d-1,d+1-\alpha}H_z\vert \rightarrow 0 $ weakly in $L^p,$ as $\vert z\vert\rightarrow 1^-.$ By    \cite[Section 8.3.3, Theorem 2]{AK}, it suffices to prove that \begin{enumerate}
\item     $ \sup_{z\in \mathbb{B}^d}  \Vert R^{\alpha-d-1,d+1-\alpha}H_z\Vert_p<\infty;$
\item $ \lim_{\vert z\vert\rightarrow 1^-}\int_{\mathbb{B}^d}\vert R^{\alpha-d-1,d+1-\alpha}H_z\vert dv=0.$
\end{enumerate}
It is immediate from Lemma \ref{ZhZ} that  \begin{equation}\label{RH} R^{\alpha-d-1,d+1-\alpha}H_z(w)=\frac{h_{s}(\langle w,z\rangle)(1-\vert z\vert^2)^{\frac{s}{q}}}{(1-\langle w,z\rangle)^{\frac{s}{q}+\frac{d+1}{p}}},\quad w\in \mathbb{B}^d,\end{equation}
where $h_s$ is a certain one-variable polynomial of degree $\frac{s}{q}-(1-\frac{1}{p})(d+1).$
 From  Lemma \ref{FR}, there exists a positive constant C such that  $$\int_{\mathbb{B}^d} \frac{\vert h_{s}(\langle w,z\rangle)\vert dv(w)}{\vert 1-\langle w,z\rangle\vert^{\frac{s}{q}+\frac{d+1}{p}}}\leq C(1-\vert z\vert^2)^{(1-\frac{1}{p})(d+1)-\frac{s}{q}},\quad z\in \mathbb{B}^d.$$
Then by (\ref{RH}), the following estimate holds,
 \begin{equation}\begin{split}\label{}
  \int_{\mathbb{B}^d}\vert R^{\alpha-d-1,d+1-\alpha}H_z\vert dv&\leq \int_{\mathbb{B}^d} \frac{\vert h_{s}(\langle w,z\rangle)\vert (1-\vert z\vert^2)^{\frac{s}{q}} dv(w)}{\vert 1-\langle w,z\rangle\vert^{\frac{s}{q}+\frac{d+1}{p}}}\notag\\
  &\leq C(1-\vert z\vert^2)^{(1-\frac{1}{p})(d+1)}.
  \end{split}\end{equation}
  Thus, $ \lim_{\vert z\vert\rightarrow 1^-}\int_{\mathbb{B}^d}\vert R^{\alpha-d-1,d+1-\alpha}H_z\vert dv=0,$ namely, condition (2) holds. Similarly, we can verify the condition (1)  by Lemma \ref{FR}. Thus we prove that $\vert R^{\alpha-d-1,d+1-\alpha}H_z\vert \rightarrow 0 $ weakly in $L^p,$ as $\vert z\vert\rightarrow 1^-.$ Now, combing with the well-known fact that a compact operator maps a weakly convergent sequence into a strongly convergent one, we obtain  $$\lim_{\vert z\vert\rightarrow 1^-} \Vert K_\alpha^+(\vert R^{\alpha-d-1,d+1-\alpha}H_z\vert)\Vert_q =0.$$
  Note that $K_\alpha^+(\vert f\vert)\geq \vert K_\alpha( f)\vert  $ for any $f\in L^1,$ then $$\lim_{\vert z\vert\rightarrow 1^-}\Vert K_\alpha(R^{\alpha-d-1,d+1-\alpha}H_z)\Vert_q =0.$$ Since $R^{0,\alpha-d-1}R^{\alpha-d-1,d+1-\alpha}f=f$ for any bounded holomorphic function $f$ on $\mathbb{B}^d,$ it follows  from Lemma \ref{kr} that $$\lim_{\vert z\vert\rightarrow 1^-}\Vert H_z\Vert_q= \lim_{\vert z\vert\rightarrow 1^-}\Vert R^{0,\alpha-d-1}R^{\alpha-d-1,d+1-\alpha}H_z\Vert_q =\lim_{\vert z\vert\rightarrow 1^-}\Vert K_\alpha R^{\alpha-d-1,d+1-\alpha}H_z\Vert_q =0,$$  and then
  $$\lim_{\vert z\vert\rightarrow 1^-}\int_{\mathbb{B}^d}\frac{(1-\vert z\vert^2)^{s}dv(w)}{\vert 1-\langle w,z\rangle\vert^{s+\frac{q(d+1)}{p}+q(\alpha-d-1)}} =\lim_{\vert z\vert\rightarrow 1^-}\Vert H_z\Vert_q^q =0.$$
  Since $s>0$ is choosen large enough, by Lemma \ref{FR} again, we  obtain that $\frac{1}{q}> \frac{1}{p}+ \frac{\alpha}{d+1}-1.$
 \end{proof}
 \begin{rem}  (1) We have proved $\vert R^{\alpha-d-1,d+1-\alpha}H_z\vert \rightarrow 0 $ weakly in $L^p,$ as $\vert z\vert\rightarrow 1^-.$ Note that bounded operators map
a weakly convergent sequence into a weakly convergent one. Analogously, we can prove that, if $K_\alpha^+:L^p \rightarrow L^q$ is bonded as the same condition in Proposition \ref{kfcpt}, then  $$\sup_{z\in \mathbb{B}^d}\int_{\mathbb{B}^d}\frac{(1-\vert z\vert^2)^{s}dv(w)}{\vert 1-\langle w,z\rangle\vert^{s+\frac{q(d+1)}{p}+q(\alpha-d-1)}} <\infty.$$   Then Lemman \ref{FR}  implies that $\frac{1}{q}\geq \frac{1}{p}+ \frac{\alpha}{d+1}-1.$ In fact, this  gives a new proof of necessity part of \cite[Lemma 4.3]{DiW}, which is a key step to  characterize the $L^p$-$L^q$ boundedness  of $K_\alpha,K_\alpha^+.$

(2) The construction of $H_z$ is inspired by  the characterization of  vanishing Carleson measure for Bergman space on the unit ball, we refer the reader to \cite{ZZ,Zhu}.
 \end{rem}

 Now we turn to the proof of Theorem 2, we first describe the $L^p $-$ L^q$ compactness of $K_{d+1}^+.$ It is as analogous to the proof of Theorem 1.
 Thus we give a sketchy proof here.

  \begin{prop}\label{kd1}  $K_{d+1}^+:L^p \rightarrow L^q$ is compact if and only if $1\leq q<p\leq\infty.$
  \end{prop}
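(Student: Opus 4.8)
The plan is to mimic the two-direction argument used for Theorem~1, adapting it to the endpoint exponent $\alpha=d+1$. \textbf{Necessity.} Suppose $K_{d+1}^+\colon L^p\to L^q$ is compact. Since compact operators are bounded, $K_{d+1}^+\colon L^p\to L^q$ is bounded; by \cite[Theorem~3]{DiW} (the $L^p$-$L^q$ boundedness characterization of $K_\alpha^+$) the boundedness at the critical value $\alpha=d+1$ already forces $1\le q<p\le\infty$. Alternatively, if one wants a self-contained argument, test against the functions $H_z$ of \eqref{Hz} (with $\alpha=d+1$, so the exponent simplifies): the weak-null argument of Proposition~\ref{kfcpt} together with $K_{d+1}^+(|f|)\ge|K_{d+1}(f)|$ and Lemma~\ref{kr} gives $\lim_{|z|\to1^-}\|H_z\|_q=0$, and Lemma~\ref{FR} then yields $\tfrac1q>\tfrac1p$, i.e. $q<p$; the case $p=q$ is excluded by the classical Forelli--Rudin result that $K_{d+1}$ is not even bounded on $L^p$-$L^p$ at the endpoint kernel exponent in the relevant range, so $1\le q<p\le\infty$.

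\textbf{Sufficiency.} Assume $1\le q<p\le\infty$. Exactly as in the proof of Theorem~1, by the Krasnoselskii interpolation Lemma~\ref{cpt} it suffices to treat the two extreme corners, and since $K_{d+1}^+$ is self-adjoint (Fubini), it is enough to prove one of them; take $p=\infty$, $1\le q<\infty$ arbitrary, and show $K_{d+1}^+\colon L^\infty\to L^q$ is compact. Let $\{f_n\}\subset L^\infty$ with $\|f_n\|_\infty\le1$. On the exhausting balls $B_j'=\{|z|<1-\tfrac1j\}$, the functions $(K_{d+1}^+f_n)|_{B_j'}$ are uniformly bounded and equicontinuous in $C(\overline{B_j'})$ by Lemma~\ref{klp} and Lemma~\ref{uf}, hence precompact in $C(\overline{B_j'})$, and therefore in $L^q(B_j')$, by Arzel\`a--Ascoli. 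For the tail estimate: by Lemma~\ref{FR}(2), $J_{d+1,0}(z)=\int_{\mathbb B^d}|1-\langle z,w\rangle|^{-(d+1)}\,dv(w)\sim-\log(1-|z|^2)$ as $|z|\to1^-$, so there is a constant $C$ with
\begin{equation*}
|K_{d+1}^+f_n(z)|\le\|f_n\|_\infty\, J_{d+1,0}(z)\le C\bigl(1-\log(1-|z|^2)\bigr),\qquad z\in\mathbb B^d.
\end{equation*}
Since $\bigl(1-\log(1-|z|^2)\bigr)^q\in L^1(\mathbb B^d)$ for every $q<\infty$ (the logarithmic power is integrable against $dv$), the absolute continuity of the integral gives, for any $\varepsilon>0$, a $J$ with $\int_{\mathbb B^d\setminus B_j'}|K_{d+1}^+f_n|^q\,dv\le C\varepsilon$ for all $j>J$ and all $n$. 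Lemma~\ref{AdF} then shows $\{K_{d+1}^+f_n\}$ is precompact in $L^q$, proving compactness of $K_{d+1}^+\colon L^\infty\to L^q$.

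The only structural difference from the proof of Theorem~1 is that the bounded pointwise majorant $J_{\alpha,0}$ available for $d+1<\alpha<d+2$ (where it behaves like $(1-|z|^2)^{d+1-\alpha}$) is now replaced by the logarithmically divergent majorant $-\log(1-|z|^2)$; the point I should be careful with is that this logarithmic growth is still $L^q$-integrable for \emph{every} finite $q$, so no restriction on $q$ beyond $q<\infty$ enters at the corner $p=\infty$, and correspondingly none beyond $p>1$ enters at the corner $q=1$ by duality. I anticipate the main (minor) obstacle is simply bookkeeping the endpoint cases $p=\infty$ and $q=1$ cleanly so that the interpolation Lemma~\ref{cpt} applies with $q_2\ne\infty$; this is handled precisely as in Theorem~1, which is why a sketch suffices.
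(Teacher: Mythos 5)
Your sufficiency argument is correct, and it takes a slightly different corner than the paper: you prove $K_{d+1}^+\colon L^\infty\to L^q$ compact for \emph{every} $q<\infty$ by means of the logarithmic majorant $J_{d+1,0}(z)\lesssim 1-\log(1-|z|^2)$, whose $q$-th power is integrable, so the whole edge $p=\infty$ is obtained at once; adjointness then gives the whole edge $q=1$, and Lemma \ref{cpt} fills in the interior. The paper instead works at the corner $q=1$ and only for $d+1<p<\infty$ (via H\"older and the Forelli--Rudin bound $(1-|z|^2)^{-(d+1)/p}$ for the tail), and must then combine the boundedness results of \cite{DiW} with Lemma \ref{cpt} to reach the remaining exponents; your variant avoids that extra patching, which is a genuine (if modest) simplification.

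The necessity half, however, has a real gap. At $\alpha=d+1$ boundedness does \emph{not} force $q<p$: $K_{d+1}^+$ is $L^p$-$L^p$ bounded for every $1<p<\infty$ (this is precisely the classical Forelli--Rudin/Schur-test fact recalled in the introduction, and the Remark after Proposition \ref{kfcpt} only yields the non-strict inequality $\frac1q\ge\frac1p$ for boundedness), and the paper explicitly notes that for $0<\alpha\le d+1$ the boundedness and compactness regions differ. Hence ``compact $\Rightarrow$ bounded $\Rightarrow$ $1\le q<p\le\infty$'' fails on the diagonal $1<p=q<\infty$, which is exactly the delicate case. Your fallback is the right idea --- Proposition \ref{kfcpt} with $\alpha=d+1$ gives the strict inequality $\frac1q>\frac1p$ for $1<p,q<\infty$ whenever $K_{d+1}^+$ is compact --- but as written it is garbled: you then try to exclude $p=q$ by asserting that $K_{d+1}$ is ``not even bounded on $L^p$-$L^p$,'' which is the opposite of the Forelli--Rudin theorem and is in any case unnecessary, since the strict inequality already rules out $p=q$ in that range. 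What the necessity argument actually still needs are the boundary cases $p=1$ and $q=\infty$, where $K_{d+1}^+$ is indeed unbounded (hence not compact); these follow from \cite{DiW} or from a direct kernel estimate, but they are not covered by the argument you wrote. With the necessity repaired along these lines the proposition follows; as stated, that half is not correct.
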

   \begin{proof} 
    By Lemma \ref{cpt} and the adjointness of $K_\alpha^+,$ it suffices to prove $K_{d+1}^+:L^p \rightarrow L^1$ is compact for any $1< p\leq \infty.$ 
Note that  \cite[Theorem 3]{DiW} shows  $K_{d+1}^+:L^\infty \rightarrow L^1$ is bounded,  combing with Lemma \ref{cpt} again, it suffices to prove  $K_{d+1}^+:L^p \rightarrow L^1$ is compact for any $d+1< p<\infty.$ Let $\{B_j'\}$ be as in Proof of Theorem 1, and $\{f_j\}$ be a bounded sequence in $L^p.$ Then we can prove $\Vert ( K_\alpha^+ f_n)|_{B_j'}\Vert_\infty\leq j^\alpha \Vert f_n\Vert_p,$ combing with Lemma \ref{klp} and Arzel\`a-Ascoli theorem implies that $\{(K_\alpha^+ f_n)|_{B_j'}\}$ is precompact in $L^p(B_j').$ By Lemma \ref{FR} and H\"older's inequality, it implies, there exists a constant $C>0$ such that

 \begin{equation}\begin{split}\label{Kfjj}
\int_{\mathbb{B}^d-B_j'}\vert K_{d+1}^+f_n(z)\vert dv(z)
&\leq \Vert f_n\Vert_p \int_{\mathbb{B}^d-B_j'}\left\vert\int_{\mathbb{B}^d}\frac{dv(w)}{\vert 1-\langle z,w\rangle\vert^{\frac{p(d+1)}{p-1}}}\right\vert
^{1-\frac{1}{p}}dv(z)\notag\\
&\leq C \Vert f_n\Vert_p \int_{\mathbb{B}^d-B_j'} (1-\vert z\vert^2)^{-\frac{d+1}{p}} dv(z).\\
\end{split}\end{equation}
 Thus we obtain that, if  $d+1< p<\infty,$ then for any $\varepsilon>0,$ there exists a $J>0,$ such that $$\int_{\mathbb{B}^d-B_j'}\vert K_{d+1}^+f_n\vert dv\leq C\varepsilon,$$
 whenever $j>J.$ Therefore,  $\{K_\alpha^+ f_n\}$ is precompact in $L^1(\mathbb{B}^d)$ by Lemma \ref{AdF}.
 \end{proof}

 {\noindent{\bf{Proof of  Theorem 2.}} Since the equivalence of $(1)\Leftrightarrow(3)$ is the main result of  \cite[Theorem 3]{DiW},  it suffices to prove $(2)\Leftrightarrow (3).$
  There are two cases $\alpha=d+1$ and $0<\alpha<d+1$ to be considered. Indeed, the case $\alpha=d+1$ has been proved in Proposition \ref{kd1}. Thus,
 it remains to deal with the case  $0<\alpha<d+1.$ By Lemma \ref{cpt} and Corollary \ref{kcom1}, we conclude that $K_\alpha^+: L^1\rightarrow L^q$ is compact if and only if $q<\frac{d+1}{\alpha} ,$ and then $K_\alpha^+: L^p\rightarrow L^\infty$ is compact if and only if $p>\frac{d+1}{d+1-\alpha}.$ Then apply  Lemma \ref{cpt}, Proposition \ref{kcom} and Proposition \ref{kfcpt}  to obtain the  equivalence of (2) and (3) when  $0<\alpha<d+1.$ It completes the proof. \qed


 We have completely characterized that $L^p$-$L^q$ compactness of the Bergman-type operators $K_\alpha,K_\alpha^+$ when $0<\alpha<d+2$ so far.  However, when $\alpha\geq d+2,$  \cite[Theorem 4]{DiW} shows that there exist no $1\leq p,q\leq \infty $ such that $K_\alpha^+:L^p\rightarrow L^q$ is bounded.

To end this section, we describe the phenomenon for $\alpha\leq0.$ Recall that a bounded  operator between two Banach spaces is called a finite rank operator if the range of the operator has finite dimension.
Obviously,  finite rank operators must be compact and the finite rank operators on a Hilbert space belong to every Schatten $p$-class with $0 < p<\infty.$
 \begin{prop}\label{fink} Suppose $\alpha\leq0,$ then the followings hold.
 \begin{enumerate}
\item $K_\alpha,K_\alpha^+:L^p\rightarrow L^q$ are compact for any $1\leq p,q\leq \infty .$
\item If $\alpha$ is a  nonpositive integer, then  $K_\alpha:L^p\rightarrow L^q$ is  a  finite rank operator for any $1\leq p,q\leq \infty .$
\item  If  $\alpha$ is a nonpositive even  integer, then $K_\alpha^+:L^p\rightarrow L^q$ is  a finite rank operator for any $1\leq p,q\leq \infty .$
\end{enumerate}
 \end{prop}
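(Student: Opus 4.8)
The plan is to treat the three statements in sequence, reducing each to elementary properties of the kernels when $\alpha \le 0$. For (1), the key observation is that when $\alpha \le 0$ the functions $(1-\langle z,w\rangle)^{-\alpha}$ and $|1-\langle z,w\rangle|^{-\alpha}$ are actually \emph{bounded} on $\overline{\mathbb{B}^d}\times\overline{\mathbb{B}^d}$ and continuous there; indeed $|1-\langle z,w\rangle| \le 2$ for $z,w\in\overline{\mathbb{B}^d}$, so $|1-\langle z,w\rangle|^{-\alpha} \le 2^{-\alpha}$ when $-\alpha\ge 0$. Thus both $K_\alpha$ and $K_\alpha^+$ are integral operators with bounded continuous kernels on a finite measure space. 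I would then run essentially the same Arzel\`a--Ascoli argument as in the proof of Proposition~\ref{kcom} (or Theorem~1): given a bounded sequence $\{f_n\}$ in $L^p$, the images $\{K_\alpha f_n\}$, $\{K_\alpha^+ f_n\}$ are uniformly bounded in $C(\overline{\mathbb{B}^d})$ by H\"older's inequality against the bounded kernel, and equicontinuous on $\overline{\mathbb{B}^d}$ because the kernel is uniformly continuous on the compact set $\overline{\mathbb{B}^d}\times\overline{\mathbb{B}^d}$ (exactly as in Lemma~\ref{uf}, but now with $K$ replaced by the whole closed ball). Hence $\{K_\alpha f_n\}$ has a subsequence converging in the sup norm, a fortiori in every $L^q(\mathbb{B}^d)$, $1\le q\le\infty$, so $K_\alpha$ and $K_\alpha^+$ are $L^p$-$L^q$ compact.

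For (2), suppose $\alpha = -m$ with $m\in\mathbb{N}$. Then $(1-\langle z,w\rangle)^{-\alpha} = (1-\langle z,w\rangle)^m$ is a polynomial in $z$ and $\bar w$; expanding by the multinomial theorem gives $(1-\langle z,w\rangle)^m = \sum_{|n|\le m} c_n\, z^n\, \bar{w}^n$ (after collecting terms, a finite sum indexed by multi-indices $n$ with $|n|\le m$, using $\langle z,w\rangle^k = \sum_{|n|=k}\binom{k}{n} z^n\bar w^n$). Consequently
\begin{equation}\label{finrank}
K_\alpha f(z) = \sum_{|n|\le m} c_n \left(\int_{\mathbb{B}^d} f(w)\,\overline{w^n}\,dv(w)\right) z^n,
\end{equation}
which shows the range of $K_\alpha$ is contained in the finite-dimensional space spanned by the monomials $\{z^n : |n|\le m\}$. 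Hence $K_\alpha$ has finite rank for any $1\le p,q\le\infty$.

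For (3), suppose $\alpha = -2m$ with $m\in\mathbb{N}$, so $\alpha$ is a nonpositive even integer. Then
$$
|1-\langle z,w\rangle|^{-\alpha} = |1-\langle z,w\rangle|^{2m} = \big((1-\langle z,w\rangle)(1-\overline{\langle z,w\rangle})\big)^m = (1-\langle z,w\rangle)^m\,(1-\langle w,z\rangle)^m,
$$
using $\overline{\langle z,w\rangle} = \langle w,z\rangle$. Each factor is a polynomial, so the product expands as a finite sum $\sum_{|n|\le m,\,|k|\le m} c_{n,k}\, z^n \bar z^k \bar w^n w^k$, and therefore
$$
K_\alpha^+ f(z) = \sum_{|n|\le m,\,|k|\le m} c_{n,k}\left(\int_{\mathbb{B}^d} f(w)\,\overline{w^n}\,w^k\,dv(w)\right) z^n \bar z^k,
$$
whose range lies in the finite-dimensional span of $\{z^n\bar z^k : |n|\le m,\ |k|\le m\}$. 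Thus $K_\alpha^+$ has finite rank. I do not anticipate a genuine obstacle here; the only point needing a little care is justifying the interchange of sum and integral in the expansions --- but each sum is finite, so this is immediate, and the continuity/boundedness of the kernels in part (1) makes the Arzel\`a--Ascoli step routine. The mild subtlety worth flagging in the write-up is that in part (3) one genuinely needs $\alpha$ \emph{even} for $|1-\langle z,w\rangle|^{-\alpha}$ to be a polynomial; for odd negative integers $\alpha$ the kernel is bounded and continuous (so compactness still holds by (1)) but not of finite rank in general.
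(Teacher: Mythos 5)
Your proposal is correct and follows essentially the same route as the paper: part (1) via the bounded, uniformly continuous kernel on $\overline{\mathbb{B}^d}\times\overline{\mathbb{B}^d}$ (bound $2^{-\alpha}$) plus an Arzel\`a--Ascoli argument, and parts (2), (3) via the same finite polynomial expansions of $(1-\langle z,w\rangle)^{m}$ and $(1-\langle z,w\rangle)^{m}(1-\langle w,z\rangle)^{m}$. The only cosmetic difference is that the paper reduces (1) to the single case $L^1\to L^\infty$ while you handle all $p$ directly with H\"older, which changes nothing of substance.
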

 \begin{proof}(1) It suffices to prove that $K_\alpha,K_\alpha^+:L^1\rightarrow L^\infty$ are compact when $\alpha\leq0.$ Suppose $\{f_j\}$ is  an arbitrary bounded sequence in $L^\infty,$ without loss of generality, we can suppose that
   \begin{equation}\label{bdss} \Vert f_j\Vert_\infty \leq 1,\quad j=1,2,\cdots.\notag\\ \end{equation} Then Lemma \ref{klp} implies that $\{K_\alpha f_j\}$  is  continuous function sequences on $\mathbb{B}^d.$ Note that the kernel function $\frac{1}{(1-\langle z,w\rangle)^\alpha}$ is  uniformly continuous on the compact set $\overline{\mathbb{B}^d}\times\overline{\mathbb{B}^d}$ when $\alpha\leq0,$
 then  we obtain that $\{K_\alpha f_j\}$ is  in fact continuous function sequence on  $\overline{\mathbb{B}^d}.$
 Since $\alpha\leq0,$ it follows that
 \begin{equation}\begin{split}\label{Kfja}
\Vert  K_\alpha f_n\Vert_\infty
&=\sup_{z\in \overline{\mathbb{B}^d}}\left\vert\int_{\mathbb{B}^d}\frac{f_n(w)dv(w)}{( 1-\langle z,w\rangle)^\alpha}\right\vert\notag\\
 &\leq 2^{-\alpha}\Vert f_n\Vert_1\\
  &\leq 2^{-\alpha},
\end{split}\end{equation}
Thus  $\{K_\alpha f_j\}$  is a bounded subset in  $C(\overline{\mathbb{B}^d}).$
The uniform continuity of the function $\frac{1}{(1-\langle z,w \rangle)^\alpha}$ also yields that,  for any $\varepsilon>0,$ there exists a $\delta>0,$ such that
$$\left\vert\frac{1}{( 1-\langle z_1,w \rangle)^\alpha}-\frac{1}{(1-\langle z_2,w \rangle)^\alpha}\right\vert\leq\varepsilon,$$
whenever $z_1,z_2\in \overline{\mathbb{B}^d}$ and $\vert z_1-z_2\vert<\delta.$ Then we have
\begin{equation}\begin{split}
\left\vert K_\alpha f_j(z_1)- K_\alpha f_j(z_2)\right\vert&\leq\int _{\mathbb{B}^d}\vert f_j\vert\left\vert\frac{1}{( 1-\langle z_1,w \rangle)^\alpha}-\frac{1}{(1-\langle z_2,w \rangle)^\alpha}\right\vert dv(w)\notag\\
&\leq \Vert f_j\Vert_1\cdot\varepsilon\\
&\leq\varepsilon
\end{split}\end{equation}
whenever $z_1,z_2\in\overline{ \mathbb{B}^d}$ and $\vert z_1-z_2\vert<\delta.$ That means  $\{K_\alpha f_j\}$  is equicontinuous. Then Arzel\`a-Ascoli theorem implies that $\{K_\alpha f_j\}$  has a convergency subsequence in the supremum norm (or is precompact). That  proves $K_\alpha :L^1\rightarrow L^\infty$ is  compact when $\alpha\leq0.$ Similarly,  $K_\alpha^+ :L^1\rightarrow L^\infty$ is  compact when $\alpha\leq0.$

(2)  It suffices to prove that $K_\alpha:L^1\rightarrow L^\infty$ is a finite rank operator  when $\alpha$ a  nonpositive integer. Suppose that $\alpha=-N,$ where $N$ is a nonnegative integer. Then  the binomial theorem implies that $$(1-\langle z,w \rangle)^N=\sum_{\vert s\vert\leq N}a_{N,s}z^s\bar{w}^s,$$
where $s\geq 0$ is multi-index and $a_{N,s}$ is nonzero constant. Thus, for any $f\in L^1,$ we have $$K_{-N} f(z)=\int_{\mathbb{B}^d}f(w)( 1-\langle z,w\rangle)^Ndv=\sum_{\vert s\vert\leq N}\left(a_{N,s}\int_{\mathbb{B}^d}f\bar{w}^sdv\right)z^s.$$ It implies that $K_{-N}$ is a finite rank operator.

(3)  It suffices to prove that $K_\alpha^+:L^1\rightarrow L^\infty$ is a finite rank operator  when $\alpha$ a   nonpositive even integer. Suppose that $\alpha=-2N,$ where $N$ is a nonnegative integer. Then $$\vert 1-\langle z,w \rangle\vert^{2N}=(1-\langle z,w \rangle)^N(1-\langle w,z \rangle)^N=\sum_{\vert s\vert,\vert l\vert\leq N}a_{N,s}a_{N,l}z^s\bar{z}^lw^l\bar{w}^s.$$
Thus, for any $f\in L^1,$ we have $$K_{-2N} f(z)=\int_{\mathbb{B}^d}f\vert 1-\langle z,w\rangle\vert^{2N}dv=\sum_{\vert s\vert,\vert l\vert\leq N}\left(a_{N,s}a_{N,l}\int_{\mathbb{B}^d}fw^l\bar{w}^sdv\right)z^s\bar{z}^l.$$ It implies that $K_{-2N}$ is a finite rank operator.
 \end{proof}
 \begin{rem} $K_{\alpha}^+$ will  not be a finite rank operator when $\alpha$ is a nonpositive odd integer.
 \end{rem}
 Thus we have the following corollary.
\begin{cor}\label{kcor} For  $\alpha\in\mathbb{R},$ then the following statements are equivalent:
\begin{enumerate}
\item $\alpha< d+2;$
\item there exist $1\leq p,q\leq \infty $ such that $K_\alpha:L^p\rightarrow L^q$ is bounded;
\item there exist $1\leq p,q\leq \infty $ such that $K_\alpha^+:L^p\rightarrow L^q$ is bounded;
\item there exist $1\leq p,q\leq \infty $ such that $K_\alpha:L^p\rightarrow L^q$ is compact;
\item there exist $1\leq p,q\leq \infty $ such that $K_\alpha^+:L^p\rightarrow L^q$ is compact.
\end{enumerate}
 \end{cor}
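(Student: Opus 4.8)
The plan is to prove Corollary \ref{kcor} by establishing the cycle of implications $(1)\Rightarrow(4)\Rightarrow(2)\Rightarrow(1)$ together with $(1)\Rightarrow(5)\Rightarrow(3)\Rightarrow(1)$, exploiting the results accumulated earlier in the excerpt. The implications $(4)\Rightarrow(2)$ and $(5)\Rightarrow(3)$ are immediate, since a compact operator between Banach spaces is bounded. For the remaining arrows I would split according to the sign and size of $\alpha$, which is exactly the partition around which Theorems 1 and 2 and Proposition \ref{fink} are organized.

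First I would prove $(1)\Rightarrow(4)$ and $(1)\Rightarrow(5)$. Suppose $\alpha<d+2$. If $\alpha\le 0$, then Proposition \ref{fink}(1) gives that $K_\alpha,K_\alpha^+:L^p\to L^q$ are compact for all $1\le p,q\le\infty$, so in particular some such $p,q$ exist. If $0<\alpha\le d+1$, then Theorem 2 supplies admissible pairs $(p,q)$ — e.g. taking $p=\infty$ and any $q<\tfrac{d+1}{\alpha}$ satisfies (3)(a) when $\alpha=d+1$ is excluded, or more simply one reads off a valid pair from cases (a)--(d) — for which both $K_\alpha$ and $K_\alpha^+$ are compact. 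If $d+1<\alpha<d+2$, then Theorem 1 shows the pair in (5)(b), say $p=\infty$ and $q$ slightly below $\tfrac{1}{\alpha-(d+1)}$, makes both $K_\alpha$ and $K_\alpha^+$ compact. Hence in every case $\alpha<d+2$ yields existence of $p,q$ with both operators compact, giving $(1)\Rightarrow(4)$ and $(1)\Rightarrow(5)$ simultaneously.

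Next I would prove $(2)\Rightarrow(1)$ and $(3)\Rightarrow(1)$, equivalently the contrapositive: if $\alpha\ge d+2$ then no $1\le p,q\le\infty$ makes $K_\alpha$ (resp. $K_\alpha^+$) bounded. For $K_\alpha^+$ this is precisely \cite[Theorem 4]{DiW}, already quoted in the paragraph just before Proposition \ref{fink}. For $K_\alpha$, one notes $K_\alpha^+(|f|)\ge |K_\alpha(f)|$ pointwise for $f\in L^1$, so boundedness of $K_\alpha:L^p\to L^q$ would force, testing against nonnegative $f$, that $K_\alpha^+:L^p\to L^q$ is bounded (on nonnegative functions, hence on all of $L^p$ by splitting into real and imaginary, positive and negative parts), contradicting \cite[Theorem 4]{DiW}. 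Alternatively, one can test $K_\alpha$ directly on the holomorphic functions $H_z$ as in Proposition \ref{kfcpt} and use Lemma \ref{FR} to see the operator norm diverges when $\alpha\ge d+2$; either route closes the loop.

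The one genuinely delicate point is the step $(2)\Rightarrow(3)$-flavored comparison, namely making sure that unboundedness of $K_\alpha$ really does follow from that of $K_\alpha^+$ and not merely the reverse; the inequality $K_\alpha^+(|f|)\ge|K_\alpha f|$ runs the right way for this. I expect the main obstacle — really the only non-bookkeeping one — to be assembling the $\alpha\ge d+2$ non-boundedness of $K_\alpha$ cleanly, since the earlier sections only state the $K_\alpha^+$ version explicitly in \cite[Theorem 4]{DiW}; the pointwise domination trick resolves it in one line. Everything else is a direct invocation of Theorems 1 and 2, Proposition \ref{fink}, and \cite[Theorem 4]{DiW}, so the proof is short.
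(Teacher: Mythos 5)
Your overall skeleton---splitting into $\alpha\le 0$, $0<\alpha\le d+1$, $d+1<\alpha<d+2$ and invoking Proposition \ref{fink}, Theorem 2 (or Proposition \ref{kd1} at $\alpha=d+1$) and Theorem 1 for $(1)\Rightarrow(4),(5)$, together with ``compact implies bounded'' for $(4)\Rightarrow(2)$ and $(5)\Rightarrow(3)$---is exactly how the corollary is meant to be read (modulo the slip that Theorem 2(3)(a) has $p=1$, not $p=\infty$; admissible pairs do exist in every case). The genuine gap is in your $(2)\Rightarrow(1)$. The pointwise inequality $\vert K_\alpha f\vert\le K_\alpha^+(\vert f\vert)$ runs the opposite way from what you assert: it shows that boundedness of $K_\alpha^+$ implies boundedness of $K_\alpha$, i.e.\ it transfers unboundedness from $K_\alpha$ to $K_\alpha^+$, not from $K_\alpha^+$ to $K_\alpha$. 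Assuming $K_\alpha:L^p\rightarrow L^q$ bounded and testing on nonnegative $f$ gives you control of $\Vert K_\alpha f\Vert_q$, which is the \emph{smaller} quantity; it gives no upper bound on $\Vert K_\alpha^+f\Vert_q$, so no contradiction with \cite[Theorem 4]{DiW} is produced. There is also no reverse domination $K_\alpha^+f\lesssim\vert K_\alpha f\vert$ for $f\ge0$ once $\alpha\ge2$, since $\arg(1-\langle z,w\rangle)$ fills out $(-\pi/2,\pi/2)$ and the phase of $(1-\langle z,w\rangle)^{-\alpha}$ can wind, producing cancellation in $K_\alpha f$.

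Your fallback---testing $K_\alpha$ on the functions $H_z$ as in Proposition \ref{kfcpt}---also does not close the loop: via $K_\alpha R^{\alpha-d-1,d+1-\alpha}H_z=H_z$ and Lemma \ref{FR}, boundedness of $K_\alpha:L^p\rightarrow L^q$ only yields the necessary condition $\frac1q\ge\frac1p+\frac{\alpha}{d+1}-1$, and for $d+2\le\alpha<2(d+1)$ this is still satisfied by plenty of pairs $(p,q)$, so it does not show the admissible region is empty. The actual obstruction for $\alpha\ge d+2$ is the other constraint appearing in Theorem 1(5) (namely $p>\frac{1}{d+2-\alpha}$, resp.\ $q<\frac{1}{\alpha-(d+1)}$ when $p=\infty$), whose necessity requires a different test and is precisely what is established in \cite{DiW} for $\alpha\ge d+2$. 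The intended (and correct) route for $(2)\Rightarrow(1)$ and $(3)\Rightarrow(1)$ is simply to quote the complete $L^p$-$L^q$ boundedness characterization of $K_\alpha$ and $K_\alpha^+$ from \cite{DiW}, which includes that no $1\le p,q\le\infty$ works when $\alpha\ge d+2$; with that citation in place of your domination step, the rest of your argument goes through.
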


 \section{Schatten class and Macaev class membership}\label{pfs}
 In the present section, we complete proofs of  Theorem 3,4, which give the necessary and sufficient conditions that ensure the  Bergman-type operator $K_\alpha$ belongs to Schatten classes and Macaev classes. The proof of Theorem 3 will be respectively given in  cases $0<\alpha<d+1$ and  $\alpha\leq0.$  Although  Theorem 3 can be uniformly  proved by the method of estimates of the spectrum, we  prefer to prove the theorem in the case  $0<\alpha<d+1$ by using the method of operator theory which  is inspired by the characterization of Schatten class Toeplitz operators and Hankel operators in \cite{Pa,Zhu1,Zhu2}.

 Denote the point spectrum (the collections of eigenvalues) of $K_\alpha $ on  the Bergman space $A^2$ by $\sigma_{pt}(K_\alpha,A^2).$

 \begin{lem}\label{spec} Suppose $\alpha<d+1,$ then   the followings hold.
  \begin{enumerate}
  \item If $\alpha\in\mathscr{S}_d,$ then $\sigma_{pt}(K_\alpha,A^2)=\{\frac{\Gamma(d+1)\Gamma(\alpha+n)}{\Gamma(\alpha)\Gamma(d+1+n)}:n\in\mathbb{N}\}.$
  \item If $-\alpha\in\mathbb{N},$  then $\sigma_{pt}(K_\alpha,A^2)=\{0\}\cup\{\frac{(-1)^n\Gamma(1-\alpha)\Gamma(d+1)}{\Gamma(1-\alpha-n)\Gamma(n+d+1)}: 0\leq n\leq -\alpha\}.$
\end{enumerate}
   \end{lem}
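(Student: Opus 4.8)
The plan is to exploit that $K_\alpha$ acts diagonally with respect to the orthogonal direct sum decomposition $A^2=\bigoplus_{n\ge0}\mathcal{P}_n$, where $\mathcal{P}_n$ is the (nonzero, finite-dimensional) space of holomorphic polynomials on $\mathbb{B}^d$ homogeneous of degree $n$; every $f\in A^2\subset L^1$ has a unique homogeneous expansion $f=\sum_{n\ge0}f_n$ with $f_n\in\mathcal{P}_n$, which coincides with its Taylor series collected by degree and converges locally uniformly (cf.\ the proof of Lemma \ref{klp}). Granting a diagonalization $K_\alpha|_{\mathcal{P}_n}=\mu_n\,\mathrm{Id}$ for suitable scalars $\mu_n$, the point spectrum reads off at once: each $\mu_n$ is an eigenvalue (any nonzero element of $\mathcal{P}_n$ is an eigenvector), and conversely if $K_\alpha f=\lambda f$ with $0\ne f=\sum_n f_n\in A^2$, then $K_\alpha f=\sum_n\mu_nf_n$ and $\lambda f=\sum_n\lambda f_n$ are two homogeneous expansions of the same holomorphic function, so $\mu_nf_n=\lambda f_n$ for every $n$; picking $n$ with $f_n\ne0$ yields $\lambda=\mu_n$. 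Hence $\sigma_{pt}(K_\alpha,A^2)=\{\mu_n:n\in\mathbb{N}\}$.

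For (1), since $\alpha\in\mathscr{S}_d$ is not a nonpositive integer, Lemma \ref{kr} gives $K_\alpha=R^{0,\alpha-d-1}$ on $A^2$, and by the very definition of $R^{s,t}$ (with $s=0$, $t=\alpha-d-1$, both admissibility conditions holding because $d\ge1$ and $\alpha-1$ is not a negative integer) one gets $\mu_n=\frac{\Gamma(d+1)\Gamma(d+1+n+\alpha-d-1)}{\Gamma(d+1+\alpha-d-1)\Gamma(d+1+n)}=\frac{\Gamma(d+1)\Gamma(\alpha+n)}{\Gamma(\alpha)\Gamma(d+1+n)}$. These are all nonzero since $(\alpha)_n\ne0$, so $0\notin\sigma_{pt}$, and part (1) follows from the diagonalization argument above.

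For (2), write $\alpha=-N$ with $N\in\mathbb{N}$. Lemma \ref{kr} no longer applies, so I would instead expand the kernel by the binomial theorem, $(1-\langle z,w\rangle)^{N}=\sum_{n=0}^{N}\frac{(-N)_n}{n!}\langle z,w\rangle^{n}$ (the finite sum already used in Proposition \ref{fink}), and recall that for $p\in\mathcal{P}_m$ one has $\int_{\mathbb{B}^d}p(w)\langle z,w\rangle^{n}\,dv(w)=0$ when $m\ne n$ (orthogonality of holomorphic polynomials of different homogeneous degrees in $A^2$) and $=\frac{m!\,\Gamma(d+1)}{\Gamma(d+1+m)}\,p(z)$ when $m=n$ (the reproducing identity used in the proof of Lemma \ref{kr}). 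Applying this term by term shows $K_{-N}|_{\mathcal{P}_m}=\frac{(-N)_m}{(d+1)_m}\,\mathrm{Id}$ for $0\le m\le N$ and $K_{-N}|_{\mathcal{P}_m}=0$ for $m>N$; since $\bigoplus_{m>N}\mathcal{P}_m\ne\{0\}$, the value $0$ is genuinely an eigenvalue. Finally, using $(-N)_n=(-1)^nN!/(N-n)!$ and $(d+1)_n=\Gamma(d+1+n)/\Gamma(d+1)$, the nonzero eigenvalues become $\frac{(-1)^n\Gamma(1-\alpha)\Gamma(d+1)}{\Gamma(1-\alpha-n)\Gamma(n+d+1)}$ with $\alpha=-N$ and $0\le n\le N$, as claimed.

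I expect the only subtle point to be the second inclusion, i.e.\ ruling out eigenvalues hidden in genuinely infinite superpositions of the $\mathcal{P}_n$; this is dispatched cleanly by uniqueness of the homogeneous (Taylor) expansion of the holomorphic function $K_\alpha f$, so no operator-norm or convergence estimates for $K_\alpha$ on $A^2$ are required beyond the pointwise identities above. What remains is the elementary bookkeeping that converts the Pochhammer ratios into the stated Gamma-function expressions.
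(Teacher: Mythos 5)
Your proposal is correct and follows essentially the same route as the paper: diagonalization of $K_\alpha$ on the homogeneous decomposition of $A^2$ via Lemma \ref{kr} in case (1) and the finite binomial expansion of the kernel in case (2), with the reverse inclusion obtained by expanding an eigenfunction in homogeneous components (the paper phrases this through the orthonormal basis of monomials rather than uniqueness of the Taylor expansion, but the argument is the same). Your explicit remarks that all $\mu_n\neq 0$ when $\alpha\in\mathscr{S}_d$ and that $0$ is genuinely an eigenvalue when $-\alpha\in\mathbb{N}$ are correct and slightly more careful than the paper's wording.
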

   \begin{proof} (1) Denote $\mu_n=\frac{\Gamma(d+1)\Gamma(\alpha+n)}{\Gamma(\alpha)\Gamma(d+1+n)},n\in\mathbb{N}.$ Due to Lemma \ref{kr}, it suffices to show that  $\sigma_{pt}(K_\alpha,A^2)\subset\{\mu_n:n\in\mathbb{N}\}.$ Suppose $\mu\in\sigma_{pt}(K_\alpha,A^2),$ then there exists a nonzero $f\in A^2$ such that \begin{equation}\label{cha} K_\alpha f=\mu f.\end{equation}
 It is easy to see that $\cup_{n\in\mathbb{N}} P_{n}$ is a orthonormal basis of $A^2,$  where $P_{n}$ is  the set of homogeneous polynomials defined by $$P_{n}=\left\{c_kz^k: \sum_{j=1}^d k_j=n, k_j \in \mathbb{N}\right\},\quad  n\in\mathbb{N},$$ and $c_k$ is the normalized positive constant such that $\Vert c_kz^k\Vert_2=1$ for each $k.$ Thus $f$ has the following representation $$f=\sum_{n=0}^\infty\sum_{e_{n,k}\in P_{n}}\langle f,e_{n,k}\rangle e_{n,k},$$  where $ \langle \cdot,\cdot \rangle$ is the standard Hermitian inner product on $A^2.$  Combing this with Lemma \ref{kr} and (\ref{cha}), we conclude that
\begin{equation}\label{decof}\sum_{n=0}^\infty(\mu-\mu_n)\sum_{e_{n,k}\in P_{n}}\langle f,e_{n,k}\rangle e_{n,k}=0.\end{equation} Since $f$ is nonzero, there exists a $e_{n_0,k_0}$ such that $\langle f,e_{n_0,k_0}\rangle\neq0.$ Then (\ref{decof}) implies that $f$ has the form \begin{equation}\label{cheq} f=\sum_{e_{n,k}\in P_{n_0}}\langle f,e_{n,k}\rangle e_{n,k},\end{equation} and $\mu=\mu_{n_0}.$ It completes the proof of $\sigma_{pt}(K_\alpha,A^2)\subset\{\mu_n:n\in\mathbb{N}\}.$

(2) Suppose $f=\sum f_n\in A^2$ with  the homogeneous expansion. Since  $ -\alpha$ is a nature number, it follows that
 \begin{equation}\begin{split}\label{}
 K_\alpha f(z)&=\int_{\mathbb{B}^d} \sum f_n(w) (1-\langle z,w\rangle)^{-\alpha}dv(w)\notag\\
                     &=\int_{\mathbb{B}^d}  \sum_{n=0}^\infty f_n \sum_{k=0}^{-\alpha} \frac{(-\alpha)!(-1)^k}{(-\alpha-k)!k!} \langle z,w\rangle^{k}dv(w)\\
                     &=\sum_{n=0}^{-\alpha} \frac{(-\alpha)!(-1)^n}{(-\alpha-n)!n!}\int f_n(w) \langle z,w\rangle^{n}dv(w)\\
                  &=\sum_{n=0}^{-\alpha} \frac{(-1)^n\Gamma(1-\alpha)\Gamma(d+1)}{\Gamma(1-\alpha-n)\Gamma(n+d+1)}f_n(z)
 \end{split}\end{equation}
 for any $z\in\mathbb{B}^d.$ Then, the same argument as above shows that  $$\sigma_{pt}(K_\alpha,A^2)=\{0\}\cup\left\{\frac{(-1)^n\Gamma(1-\alpha)\Gamma(d+1)}{\Gamma(1-\alpha-n)\Gamma(n+d+1)}: 0\leq n\leq -\alpha\right\}.$$
   \end{proof}
 Theorem 2, Lemma \ref{klp}, Lemma \ref{kr} and the boundedness of embedding $A^2\rightarrow L^2,$ imply that $K_\alpha$ is a normal compact operator on the Bergman space $A^2$ when $\alpha<d+1.$ In particular, $K_\alpha$  is normal compact  on  $A^2$ when $0<\alpha<d+1,$ thus we can apply the functional calculation to $K_\alpha.$ Set function $F_p(x)=x^p$ on $\mathbb{R}_{\geq0}$ for any $p>0.$ Note that $F_p:\mathbb{R}_{\geq0}\rightarrow\mathbb{R}_{\geq0}$ is bijective. We define $K_\alpha^p=F_p(K_\alpha),$ which is the functional calculation of  $K_\alpha\in B(A^2)$  with respect to the function $F_p(x)=x^p,$ where  $B(A^2)$ is the collections of bounded operators on the Bergman space $A^2.$

 \begin{lem}\label{funo} Suppose $0<\alpha<d+1.$ Then for any $0<p<\frac{d+1}{d+1-\alpha},$ there exists a positive constant $C_p$ such that the following operator inequality
      $$\frac{1}{C_p} K_{p\alpha-(p-1)(d+1)}\leq K_\alpha^p\leq C_pK_{p\alpha-(p-1)(d+1)}$$ holds on the Bergman space $A^2.$
 \end{lem}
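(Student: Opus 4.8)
\emph{Proof sketch.} The plan is to diagonalize everything in sight and reduce the operator inequality to a two-sided bound on a ratio of eigenvalues. First set $\beta:=p\alpha-(p-1)(d+1)=(d+1)-p(d+1-\alpha)$. Since $0<p<\frac{d+1}{d+1-\alpha}$ and $0<\alpha<d+1$ one has $0<p(d+1-\alpha)<d+1$, hence $0<\beta<d+1$; in particular $\beta$ is not a nonpositive integer, so Lemma \ref{kr} applies to both $\alpha$ and $\beta$. Thus on $A^2$ we may write $K_\alpha=R^{0,\alpha-d-1}$ and $K_\beta=R^{0,\beta-d-1}$, and both act diagonally with respect to the orthogonal decomposition $A^2=\bigoplus_{n\ge0}\mathcal{P}_n$, where $\mathcal{P}_n$ denotes the span of the homogeneous polynomials of degree $n$: on $\mathcal{P}_n$ the operator $K_\gamma$ is multiplication by the scalar $\mu_n(\gamma):=\frac{\Gamma(d+1)\Gamma(\gamma+n)}{\Gamma(\gamma)\Gamma(d+1+n)}$ for $\gamma\in\{\alpha,\beta\}$. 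As $0<\alpha<d+1$ and $0<\beta<d+1$, every $\mu_n(\gamma)$ is strictly positive and tends to $0$, so $K_\alpha$ and $K_\beta$ are positive compact self-adjoint operators on $A^2$; consequently the functional calculus operator $K_\alpha^p=F_p(K_\alpha)$ is again diagonal in this decomposition, acting on $\mathcal{P}_n$ by the positive scalar $\mu_n(\alpha)^p$.

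Since $K_\alpha^p$ and $K_\beta$ are simultaneously diagonalized by the same orthogonal decomposition, the desired operator inequality $\frac{1}{C_p}K_\beta\le K_\alpha^p\le C_pK_\beta$ on $A^2$ is equivalent to the scalar estimate
\[
\frac{1}{C_p}\,\mu_n(\beta)\ \le\ \mu_n(\alpha)^p\ \le\ C_p\,\mu_n(\beta)\qquad\text{for all }n\in\mathbb{N}
\]
(for necessity, test the quadratic forms on basis vectors of $\mathcal{P}_n$; for sufficiency, sum the quadratic forms over $n$). So it suffices to show that the positive sequence $r_n:=\mu_n(\alpha)^p/\mu_n(\beta)$ is bounded above and bounded away from zero uniformly in $n$, and then take $C_p=\max\{\sup_n r_n,(\inf_n r_n)^{-1}\}$.

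For this last point I would compute
\[
r_n=\frac{\Gamma(d+1)^{p-1}\Gamma(\beta)}{\Gamma(\alpha)^p}\cdot\left(\frac{\Gamma(\alpha+n)}{\Gamma(d+1+n)}\right)^{p}\frac{\Gamma(d+1+n)}{\Gamma(\beta+n)},
\]
noting that $r_n>0$ for every $n$ (all Gamma-arguments are positive, and $r_0=1$). By Stirling's formula in the form $\Gamma(n+a)/\Gamma(n+b)\sim n^{a-b}$, the last two factors combine to $\sim n^{\,p(\alpha-d-1)+(d+1)-\beta}=n^{0}=1$, the exponent vanishing precisely by the definition of $\beta$; hence $r_n\to\Gamma(d+1)^{p-1}\Gamma(\beta)/\Gamma(\alpha)^p\in(0,\infty)$. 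A positive sequence converging to a positive limit satisfies $0<\inf_n r_n\le\sup_n r_n<\infty$, which completes the proof.

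The only genuinely delicate part is the bookkeeping: one must confirm that $\beta$ lands in $(0,d+1)$, which is exactly the range where Lemma \ref{kr} governs both $K_\alpha$ and $K_\beta$ and where the eigenvalues are positive — this is what makes the operator inequality both meaningful and reducible to the scalar comparison — and one must observe that the power of $n$ produced by the Gamma-ratio asymptotics cancels exactly, which is forced by $\beta=p\alpha-(p-1)(d+1)$. Beyond this the lemma is essentially an eigenvalue identity dressed up as an operator inequality, and there is no analytic obstacle.
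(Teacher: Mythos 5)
Your proposal is correct and follows essentially the same route as the paper: identify $K_\alpha$ and $K_{p\alpha-(p-1)(d+1)}$ as diagonal operators on the homogeneous decomposition of $A^2$ via Lemma \ref{kr}, apply the functional calculus to get the eigenvalues $\mu_n(\alpha)^p$, and use Stirling's formula to see that the two eigenvalue sequences are comparable, which gives the quadratic-form inequality. Your write-up is just a bit more explicit than the paper's about why the asymptotic equivalence upgrades to a uniform two-sided bound (strict positivity of each ratio plus a positive finite limit).
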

 \begin{proof} Since $K_\alpha$ is  positive and compact on $A^2,$ it follows from \cite[Theorem 1.9.2]{Rin} that  $K_\alpha$ admits the following  canonical decomposition
  $$  K_\alpha f=\sum_{n=0}^{\infty}\lambda_n \langle f,e_{n}\rangle e_{n},$$ where $\{\lambda_n\}$ is the sequence of nonzero eigenvalues (counting multiplicities) with decreasing order, $\{e_n\}$ is the corresponding  orthonormal sequence of eigenvectors and $ \langle \cdot,\cdot \rangle$ is the standard Hermitian inner product on $A^2.$
 Combing with  Lemma \ref{spec}, we can further suppose \begin{equation}\label{cade} K_\alpha f=\sum_{n=0}^{\infty}\frac{\Gamma(d+1)\Gamma(\alpha+n)}{\Gamma(\alpha)\Gamma(d+1+n)}\sum_{e_{n,k}\in P_{n}}\langle f,e_{n,k}\rangle e_{n,k}  \end{equation} is the canonical decomposition of $K_\alpha.$   It implies from the  functional calculation and (\ref{cade}) that  $$ K_\alpha^p f=\sum_{n=0}^{\infty}\left( \frac{\Gamma(d+1)\Gamma(\alpha+n)}{\Gamma(\alpha)\Gamma(d+1+n)}\right)^p\sum_{e_{n,k}\in P_{n}}\langle f,e_{n,k}\rangle e_{n,k},  $$
 is the canonical decomposition of $K_\alpha^p.$  Note that the condition $0<p<\frac{d+1}{d+1-\alpha}$ ensures $p\alpha-(p-1)(d+1)>0.$  By Stirling's formula, we conclude that
                  $$\left( \frac{\Gamma(d+1)\Gamma(\alpha+n)}{\Gamma(\alpha)\Gamma(d+1+n)}\right)^p\sim n^{p(\alpha-(d+1))}\sim \frac{\Gamma(d+1)\Gamma(p\alpha-(p-1)(d+1)+n)}{\Gamma(p\alpha-(p-1)(d+1))\Gamma(d+1+n)}, n\rightarrow\infty.$$
Together this with Lemma \ref{kr} shows that  there exists a positive constant $C_p$ satisfying
 $$\langle\frac{1}{C_p} K_{p\alpha-(p-1)(d+1)}f,f\rangle\leq\langle K_\alpha^p f,f  \rangle\leq  \langle C_p K_{p\alpha-(p-1)(d+1)}f,f\rangle,$$
for any  $f\in A^2.$ This finishes the proof.
 \end{proof}

 Now we recall the Berezin transform on the unit ball $\mathbb{B}^d.$
The Bergman kernel of $\mathbb{B}^d $ is given by
$$K_w(z)= K(z,w)=\frac{1}{(1-\langle z,w\rangle)^{d +1}}, \quad   z,w \in \mathbb{B}^d ,$$
which is also called the reproducing kernel of $A^2,$ since  \begin{equation}\label{rep} f(z)=\langle f, K_z\rangle,  \quad   z \in \mathbb{B}^d\end{equation}
for any $f\in A^2.$ The normalized reproducing kernel of  $A^2$ is
 \begin{equation}\label{nrep}  k_{w}(z)=\frac{K(z,w)}{\sqrt{K(w,w)}}=\frac{(1-|w|^2)^\frac{d+1}{2}}{(1-\langle z,w\rangle)^{d+1}},  \quad   z,w \in \mathbb{B}^d.\end{equation}
For a bounded operator $T\in B(A^2)$, the Berezin transform $ \widetilde{T}$ of $T$ is given by
$$ \widetilde{T}(z)=\langle T k_{z},k_{z}\rangle,\quad z \in \mathbb{B}^d.$$
The M\"obius  invariant measure $d\lambda$ on $\mathbb{B}^d $ is defined by $$ d\lambda(z)=\frac{dv(z)}{(1-\vert z\vert^2)^{d+1}}.$$
The Berezin transform  is an important tool in the operator theory on the holomorphic function space, see \cite{Zhu,Zhu1,Zhu2} for more details.
In what follows, we calculate the Berezin transform of $K_\alpha.$
\begin{lem}\label{bere} $\widetilde{K_\alpha}(z)=(1-\vert z\vert^2)^{d+1-\alpha}.$
\end{lem}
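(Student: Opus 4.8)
The plan is to compute $K_\alpha k_z$ in closed form and then evaluate it at $z$ using the reproducing property of $A^2$. First I would record the elementary consequence of (\ref{rep}) and (\ref{nrep}): since $K(z,z)=(1-|z|^2)^{-(d+1)}$, every $g\in A^2$ satisfies $\langle g,k_z\rangle=g(z)/\sqrt{K(z,z)}=(1-|z|^2)^{\frac{d+1}{2}}g(z)$. Hence, once it is known that $K_\alpha k_z\in A^2$, the Berezin transform satisfies
$$\widetilde{K_\alpha}(z)=\langle K_\alpha k_z,k_z\rangle=(1-|z|^2)^{\frac{d+1}{2}}\,(K_\alpha k_z)(z),$$
so the whole problem reduces to identifying $K_\alpha k_z$ and evaluating it at the single point $w=z$.

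Next I would compute $K_\alpha k_z$. Writing $k_z=(1-|z|^2)^{\frac{d+1}{2}}K_z$ with $K_z(w)=(1-\langle w,z\rangle)^{-(d+1)}$, and noting that $K_z\in A^2$ (indeed $\|K_z\|_2^2=K(z,z)<\infty$), Lemma \ref{kr} gives $K_\alpha k_z=(1-|z|^2)^{\frac{d+1}{2}}R^{0,\alpha-d-1}K_z$ whenever $\alpha$ is not a nonpositive integer, in particular whenever $\alpha\in\mathscr{S}_d$, which is the only range needed for Theorem 3. Then I would apply Lemma \ref{ZhZ} with $s=0$, $t=\alpha-d-1$ and $N=0$: here $h\equiv1$, and the admissibility requirements that $d+s$ and $d+s+t=\alpha-1$ are not negative integers hold precisely because $-\alpha\notin\mathbb{N}$, so that $R^{0,\alpha-d-1}(1-\langle w,z\rangle)^{-(d+1)}=(1-\langle w,z\rangle)^{-\alpha}$. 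Therefore $K_\alpha k_z(w)=(1-|z|^2)^{\frac{d+1}{2}}(1-\langle w,z\rangle)^{-\alpha}$; for fixed $z$ this is holomorphic and bounded on $\mathbb{B}^d$ (since $|1-\langle w,z\rangle|\ge 1-|z|>0$ there), hence lies in $A^2$. Evaluating at $w=z$, where $\langle z,z\rangle=|z|^2$, yields $(K_\alpha k_z)(z)=(1-|z|^2)^{\frac{d+1}{2}}(1-|z|^2)^{-\alpha}$.

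Combining the two steps gives $\widetilde{K_\alpha}(z)=(1-|z|^2)^{\frac{d+1}{2}}\cdot(1-|z|^2)^{\frac{d+1}{2}-\alpha}=(1-|z|^2)^{d+1-\alpha}$, as claimed. For completeness, when $-\alpha\in\mathbb{N}$ the Lemmas \ref{kr} and \ref{ZhZ} are not applicable verbatim, but then $(1-\langle w,z\rangle)^{-\alpha}$ is a polynomial and the identity $K_\alpha k_z(w)=(1-|z|^2)^{\frac{d+1}{2}}(1-\langle w,z\rangle)^{-\alpha}$ can be checked by hand: expand $(1-\langle w,u\rangle)^{-\alpha}=\sum_{n}\tfrac{(-\alpha)!(-1)^n}{(-\alpha-n)!\,n!}\,\overline{\langle u,w\rangle}^{\,n}$ (a finite sum), integrate against $K_z(u)$ term by term, and use $\int_{\mathbb{B}^d}\overline{\langle u,w\rangle}^{\,n}(1-\langle u,z\rangle)^{-(d+1)}\,dv(u)=\langle w,z\rangle^{n}$, which is just (\ref{rep}) applied to the monomial $u\mapsto\langle u,w\rangle^{n}$; the same conclusion follows. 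I do not expect a genuine obstacle here: the only points needing attention are the bookkeeping of the hypotheses of Lemma \ref{ZhZ} and the verification that $K_\alpha k_z\in A^2$, so that the reproducing identity may legitimately be applied to $g=K_\alpha k_z$.
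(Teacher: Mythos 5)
Your proof is correct, but the key computation is done by a different mechanism than in the paper. You identify $K_\alpha$ with the fractional radial differential operator via Lemma \ref{kr} and then invoke Lemma \ref{ZhZ} (with $s=0$, $t=\alpha-d-1$, $N=0$) to get $K_\alpha K_z(w)=(1-\langle w,z\rangle)^{-\alpha}$; this forces you to track the admissibility hypotheses ($-\alpha\notin\mathbb{N}$, which you handle correctly) and to treat the nonpositive-integer case separately by a finite binomial expansion (also correct, since $\int_{\mathbb{B}^d}\langle w,u\rangle^{n}K_z(u)\,dv(u)=\overline{\langle u\mapsto\langle u,w\rangle^{n},K_z\rangle}=\langle w,z\rangle^{n}$). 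The paper instead gets the same identity $K_\alpha K_z(w)=(1-\langle w,z\rangle)^{-\alpha}$ in one stroke, for every real $\alpha$ and with no case distinction: writing $K_{\alpha,w}(u)=(1-\langle u,w\rangle)^{-\alpha}\in A^2$, the defining integral of $K_\alpha K_z(w)$ is literally $\overline{\langle K_{\alpha,w},K_z\rangle}$, which equals $\overline{K_{\alpha,w}(z)}$ by the reproducing property; the Berezin transform is then evaluated exactly as you do, via $\langle K_{\alpha,z},K_z\rangle=K_{\alpha,z}(z)$. So the paper's route is more elementary and uniform, while yours has the merit of making explicit the link to the $R^{s,t}$ machinery already used elsewhere in the paper (Lemmas \ref{kr} and \ref{ZhZ}); your attention to the $A^2$-membership of $K_\alpha k_z$ before applying the reproducing identity is the right bookkeeping and matches the paper's implicit use of $K_{\alpha,w}\in A^2$.
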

  \begin{proof} For every $w\in \mathbb{B}^d,$ denote the holomorphic function $K_{\alpha,w}(z)$ on $\mathbb{B}^d$ by \begin{equation}\label{kaz} K_{\alpha,w}(z)=\frac{1}{(1-\langle z,w\rangle)^{\alpha}}.\notag\\ \end{equation}
  Obviously,  $K_{\alpha,w}\in A^2$ for every $w\in \mathbb{B}^d.$
  By (\ref{rep}), we have
  $$ K_\alpha K_z(w) =\overline{\langle K_{\alpha,w},K_z\rangle}=\frac{1}{(1-\langle w,z\rangle)^{\alpha}} .$$
Combing this with (\ref{rep}) and  (\ref{nrep}),   we get that
 \begin{equation}\begin{split}\label{}
 \widetilde{K_\alpha}(z)&= \langle K_\alpha k_z,k_z\rangle \notag\\
 &=(1-\vert z\vert^2)^{d+1}\langle K_\alpha K_z,K_z\rangle \\
 &=(1-\vert z\vert^2)^{d+1}\langle K_{\alpha,z}, K_z\rangle \\
 &=(1-\vert z\vert^2)^{d+1-\alpha}.
   \end{split}\end{equation} This completes the proof.
   \end{proof}

The following  lemma establishes a connection between the Berezin transform and Schatten $p$-class on  the Bergman space $A^2,$ see \cite[Lemma C]{Pa} and \cite[Lemma  7.10]{Zhu1} for more details.
  \begin{lem}\cite{Pa,Zhu1}\label{sch} If $T\in B(A^2)$ is a positive operator, then the followings hold.
  \begin{enumerate}
   \item  $T\in S_1(A^2)$ if and only if $ \widetilde{T}\in L^1(d\lambda).$ Moreover, the following trace formula holds,
 \begin{equation}\label{tra} \textrm{Tr}(T)=\int_{\mathbb{B}^d}  \widetilde{T}d\lambda.\end{equation}
 \item For $1<p<\infty,$ then $ \widetilde{T}\in L^p(d\lambda)$ if  $T\in \mathcal{L}^p(A^2).$
   \end{enumerate}
\end{lem}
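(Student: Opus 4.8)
The plan is to reduce both statements to the single identity
\begin{equation}
\int_{\mathbb{B}^d}\widetilde{T}\,d\lambda=\sum_{n}\langle Te_n,e_n\rangle\notag
\end{equation}
valid for any orthonormal basis $\{e_n\}$ of $A^2$ and any positive $T\in B(A^2)$, where the right-hand side is read as an element of $[0,\infty]$. First I would record the elementary facts that, by the formula (\ref{nrep}) for $k_w$, by $d\lambda(w)=dv(w)/(1-|w|^2)^{d+1}$, and by $K(w,w)=(1-|w|^2)^{-(d+1)}$, one has $d\lambda(w)=K(w,w)\,dv(w)$ and $\widetilde{T}(w)=\langle Tk_w,k_w\rangle=K(w,w)^{-1}\langle TK_w,K_w\rangle$, so that $\int_{\mathbb{B}^d}\widetilde{T}\,d\lambda=\int_{\mathbb{B}^d}\langle TK_w,K_w\rangle\,dv(w)$. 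Writing $T=S^{*}S$ with the self-adjoint $S=T^{1/2}$ and using the reproducing property $\langle SK_w,e_n\rangle=\langle K_w,Se_n\rangle=\overline{(Se_n)(w)}$, one gets $\langle TK_w,K_w\rangle=\|SK_w\|^2=\sum_n|(Se_n)(w)|^2$; since every term is nonnegative, Tonelli's theorem permits interchanging sum and integral to obtain $\int_{\mathbb{B}^d}\langle TK_w,K_w\rangle\,dv=\sum_n\|Se_n\|^2=\sum_n\langle Te_n,e_n\rangle$. As $T\ge0$, this last quantity is the trace of $T$, well defined in $[0,\infty]$ and independent of the basis; hence it is finite precisely when $T\in S_1(A^2)$, and in that case equals $\textrm{Tr}(T)$, which proves part (1) together with the trace formula (\ref{tra}).

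For part (2), I would first establish the pointwise bound $\widetilde{T}(w)^p\le\widetilde{T^p}(w)$ for $p\ge1$. Fixing $w$ and letting $d\mu_w$ be the spectral measure of the positive operator $T$ at the unit vector $k_w$ (a probability measure, since $\|k_w\|=1$), one has $\widetilde{T}(w)=\int t\,d\mu_w(t)$ and $\widetilde{T^p}(w)=\int t^p\,d\mu_w(t)$, so the bound is Jensen's inequality for the convex function $t\mapsto t^p$. Integrating against $d\lambda$ and applying part (1) to the positive operator $T^p$,
\begin{equation}
\int_{\mathbb{B}^d}\widetilde{T}(w)^p\,d\lambda(w)\le\int_{\mathbb{B}^d}\widetilde{T^p}(w)\,d\lambda(w)=\textrm{Tr}(T^p).\notag
\end{equation}
If $T\in\mathcal{L}^p(A^2)$ with $1<p<\infty$, then $T$ is compact and $\{\mu_n(|T|)\}\in\ell^p$; since $|T|=T$, the operator $T^p$ has eigenvalue sequence $\{\mu_n(|T|)^p\}\in\ell^1$, so $T^p\in S_1(A^2)$ with $\textrm{Tr}(T^p)=\sum_n\mu_n(|T|)^p<\infty$. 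Consequently $\widetilde{T}\in L^p(d\lambda)$, with $\|\widetilde{T}\|_{L^p(d\lambda)}\le\|T\|_{\mathcal{L}^p(A^2)}$.

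The calculations are routine; the only points that need a little care are the justification of the sum--integral interchange, which is handled by Tonelli's theorem because the integrand is a series of nonnegative terms, and the use of the spectral theorem to make sense of $T^p$ and of Jensen's inequality when $T$ is merely bounded and positive rather than compact. I do not expect any genuine obstacle beyond this bookkeeping.
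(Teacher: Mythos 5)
Your proof is correct. Note that the paper offers no proof of this lemma at all: it is quoted from the references \cite{Pa,Zhu1} (Pau's Lemma~C and Zhu's Lemma~7.10), so there is nothing internal to compare against. Your argument --- the identity $\int_{\mathbb{B}^d}\widetilde{T}\,d\lambda=\sum_n\langle Te_n,e_n\rangle$ obtained from $d\lambda(w)=K(w,w)\,dv(w)$, the reproducing property applied to $S=T^{1/2}$, and Tonelli, followed by the spectral-measure/Jensen bound $\widetilde{T}^{\,p}\leq\widetilde{T^p}$ for part (2) --- is essentially the standard proof found in those sources, and all the delicate points (basis-independence of the trace of a positive operator, measurability and the sum--integral interchange, $T^p\in\mathcal{L}^1$ when $T\in\mathcal{L}^p$) are handled correctly.
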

 Now we can prove Theorem 3. The proof will be given in cases $0<\alpha<d+1$ and $\alpha\leq0,$ respectively.

{\noindent{\bf{Proof of Theorem 3 for the case $0<\alpha<d+1$.}}
Note that the compact operator $K_\alpha$ is adjoint on $L^2$  by Fubini's theorem,  it follows from \cite[Theorem 1.9.2]{Rin} that $K_\alpha $ on $L^2$  admits  the canonical decomposition  \begin{equation}\label{can} K_\alpha f=\sum_{n=0}^\infty \lambda_n \langle f,e_n\rangle e_n\end{equation}
 whenever $\alpha<d+1,$ where $\{\lambda_n\}$ is the sequence of nonzero eigenvalues (counting multiplicities) and  $\{e_n\}$ is the corresponding  orthonormal sequence of eigenvectors.
It follows from  (\ref{can}) that $$ K_\alpha e_n=\lambda_n e_n,n=0,1,\cdots.$$ Since $K_\alpha e_n$ is holomorphic by Lemma \ref{klp}, we obtain that $e_n$ is  holomorphic for any integer $n\geq 0.$ Thus $K_\alpha $ on $A^2$ and  $L^2$ own the same canonical decomposition (\ref{can}). This completes the proof that (2) is always equivalent to (1).

Now we turn to prove that (2) implies (3). Namely, we need to prove   $\widetilde{K}_{\alpha}\in L^p(d\lambda)$  if  $K_\alpha\in \mathcal{L}^p(A^2).$ Suppose that  $K_\alpha\in \mathcal{L}^p(A^2).$  Note that $\frac{d+1}{d+1-\alpha}>1,$ then  Lemma \ref{sch} shows that $\widetilde{K}_{\alpha}\in L^p(d\lambda)$  if $p\geq\frac{d+1}{d+1-\alpha}.$ Thus, it suffices to consider the case $0<p<\frac{d+1}{d+1-\alpha}.$  Observe that $0<p<\frac{d+1}{d+1-\alpha}$ ensures  $p\alpha-(p-1)(d+1)>0.$ Then, by  Lemma \ref{funo},
there exists a positive constant $C_p'$ such that \begin{equation}\label{kkcp}\frac{1}{C_p'} K_{p\alpha-(p-1)(d+1)}\leq K_\alpha^p. \end{equation} Note that $K_\alpha^p\in  \mathcal{L}^1(A^2),$ together with (\ref{kkcp}) shows that $ K_{p\alpha-(p-1)(d+1)}\in  \mathcal{L}^1(A^2).$ Then  Lemma \ref{bere} and Lemma \ref{sch} imply
\begin{equation}\begin{split}\label{}
\int_{\mathbb{B}^d}  \vert \widetilde{K}_{\alpha}\vert^p d\lambda&=\int_{\mathbb{B}^d}(1-\vert z\vert^2)^{p(d+1-\alpha)}d\lambda\notag\\
&=\int_{\mathbb{B}^d}(1-\vert z\vert^2)^{d+1-(p\alpha-(p-1)(d+1))}d\lambda\\
&=\int_{\mathbb{B}^d}  \widetilde{K}_{p\alpha-(p-1)(d+1)}d\lambda\\
&= \textrm{Tr}(K_{p\alpha-(p-1)(d+1)})\\
&<\infty.\\
\end{split}\end{equation}
Then $\widetilde{K}_{\alpha}\in L^p(d\lambda).$ This shows that (2) implies (3).

Suppose $\widetilde{K_\alpha}\in L^p(d\lambda),$ we go to prove that $p>\frac{d}{d+1-\alpha}.$ By Lemma \ref{bere}, we have
 \begin{equation}\begin{split}\label{}
\int_{\mathbb{B}^d}\vert \widetilde{K_\alpha}\vert^pd\lambda&=\int_{\mathbb{B}^d}\vert (1-\vert z \vert^2)^{p(d+1-\alpha)}d\lambda\notag\\
&=\int_{\mathbb{B}^d} (1-\vert z \vert^2)^{(p-1)(d+1)-p\alpha}dv
\end{split}\end{equation}
Together this with the condition $\widetilde{K_\alpha}\in L^p(d\lambda)$ shows that $(1-\vert z \vert^2)^{(p-1)(d+1)-p\alpha}\in L^1(\mathbb{B}^d).$
 By the fact that, for $t\in \mathbb{R},$ $(1-\vert z\vert^2)^t \in L^1(\mathbb{B}^d)$ if and only if $t>-1,$ we conclude that $(p-1)(d+1)-p\alpha>-1.$
Then $p>\frac{d}{d+1-\alpha}.$ This shows that (3) implies (4).

Now we turn to prove that (4) implies (2), that means that we need to prove $K_\alpha\in \mathcal{L}^p(A^2)$ if  $p>\frac{d}{d+1-\alpha}.$ As mentioned above, we know that  $K_\alpha$ is compact on the Bergman space $A^2,$ i.e. $K_\alpha\in \mathcal{L}^\infty(A^2).$ Since $\frac{d+1}{d+1-\alpha}>1,$ it suffices to prove that $K_\alpha\in \mathcal{L}^p(A^2) $ when $\frac{d}{d+1-\alpha}<p<\frac{d+1}{d+1-\alpha}$ by the interpolation theorem of Schatten classes, see   \cite[Theorem 2.6]{Zhu1}. Now Lemma \ref{funo} shows that,
there exists a positive constant $C_p''$ such that
   \begin{equation}\label{kpkk} K_\alpha^p\leq C_p''K_{p\alpha-(p-1)(d+1)}\end{equation}
  The condition $\frac{d}{d+1-\alpha}<p<\frac{d+1}{d+1-\alpha}$  means that $$0<p\alpha -(p-1)(d+1)<1.$$
  Then Lemma \ref{bere} shows that
    \begin{equation}\begin{split}\label{}
\int_{\mathbb{B}^d}\vert \widetilde{K}_{p\alpha-(p-1)(d+1)}\vert d\lambda&=\int_{\mathbb{B}^d}(1-\vert z \vert^2)^{p(d+1-\alpha)}d\lambda\notag\\
&=\int_{\mathbb{B}^d} (1-\vert z \vert^2)^{(p-1)(d+1)-p\alpha}dv\\
&<\infty.
\end{split}\end{equation}
   Combing with Lemma \ref{sch} follows that $\widetilde{K}_{p\alpha-(p-1)(d+1)}\in  \mathcal{L}^1(A^2).$ Therefor, we obtain that $K_\alpha\in \mathcal{L}^p(A^2)$ by (\ref{kpkk}).
   This shows that (4) implies (2). That completes the proof.\qed

Although $K_\alpha $ on $A^2$ and  $L^2$ own the same canonical decomposition (\ref{can}) when $0<\alpha<d+1,$  the point spectrum  of $K_\alpha $ on $A^2$ and  $L^2$ differ by the element $0.$ Indeed, the point spectrum of $K_\alpha $ on $L^2$ is $\{0\}\cup\{\frac{\Gamma(d+1)\Gamma(\alpha+n)}{\Gamma(\alpha)\Gamma(d+1+n)}:n\in\mathbb{N}\}.$


{\noindent{\bf{Proof of Theorem 3 for  the case $\alpha\leq0$.}} We have proven that  (1) and (2) is always  equivalent when $\alpha< d+1.$ Thus, it suffice to prove (2), (3) and (4) are equivalent if  $\alpha\leq0$ and $\alpha$ is not a integer.

Suppose $\alpha\leq0$ and $\alpha$ is not a integer. From Lemma \ref{spec} we know that $\{\mu_n:n\in\mathbb{N}\} $ is exactly the point spectrum of $K_\alpha$ on $A^2,$ where $\mu_n=\frac{\Gamma(d+1)\Gamma(\alpha+n)}{\Gamma(\alpha)\Gamma(d+1+n)},n\in\mathbb{N}.$ Let $E_n$ be  the eigenspace  corresponding   to $\mu_n.$
It follows from (\ref{cheq}) that \begin{equation}\label{dimn}dim \hspace{0.3mm}E_n=\# P_n= \frac{(n+1)_{d-1}}{(d-1)!}, \quad n\in\mathbb{N}.\end{equation}
Combing this with  the definition of $\mathcal{L}^p(A^2),$  we know that    $K_\alpha \in \mathcal{L}^p(A^2)$ if and only if \begin{equation}\label{scE}\sum_{n=0}^\infty \vert\mu_n\vert^p\hspace{0.4mm} dim \hspace{0.3mm}E_n=\sum_{n=0}^\infty\left\vert\frac{\Gamma(d+1)\Gamma(\alpha+n)}{\Gamma(\alpha)\Gamma(d+1+n)}\right\vert^p \frac{(n+1)_{d-1}}{(d-1)!}<\infty.\end{equation}
Then Stirling's formula implies that (\ref{scE}) is equivalent to $$\sum_{n=1}^\infty\frac{1}{n^{p(d+1-\alpha)-(d-1)}}<\infty.$$
It follows that  $K_\alpha \in \mathcal{L}^p(A^2)$ if and only if $p>\frac{d}{d+1-\alpha}.$ This implies that (2) and (4) are equivalent if  $\alpha\leq0$ and $\alpha$ is not a integer.

Suppose $\alpha\leq0$ and $\alpha$ is not a integer.  From  Lemma \ref{bere},  we obtain that  $ \widetilde{K}_{\alpha}\in L^p(d\lambda)$ if and only if
\begin{equation}\begin{split}\label{brep}
\int_{\mathbb{B}^d}  \vert \widetilde{K}_{\alpha}\vert^p d\lambda=\int_{\mathbb{B}^d}(1-\vert z\vert^2)^{p(d+1-\alpha)}d\lambda
=\int_{\mathbb{B}^d}(1-\vert z\vert^2)^{p(d+1-\alpha)-(d+1)}dv
<\infty.\\
\end{split}\end{equation}
By the well-known fact that  $(1-\vert z\vert^2)^t \in L^1(\mathbb{B}^d)$ if and only if $t>-1,$ we conclude that (\ref{brep}) is  equivalent to  $p>\frac{d}{d+1-\alpha}.$
This implies that (3) and (4) are equivalent if  $\alpha\leq0$ and $\alpha$ is not a integer. \qed

Now we turn to prove Theorem 4. 

{\noindent{\bf{Proof of  Theorem 4.}}
It follows from Lemma \ref{klp} and the canonical decomposition (\ref{can}) when $\alpha<d+1$ that the normal  compact operator $K_\alpha$ on $L^2$ and $A^2$ has the same nonzero  eigenvalues (counting
multiplicities). We thus obtain that $K_\alpha\in \mathcal{L}^{p,\infty}(L^2)$ if and only if $K_\alpha\in \mathcal{L}^{p,\infty}(A^2)$ when $\alpha\in\mathscr{S}_d.$
This implies that (1) and (2) are equivalent. Now we turn to prove that (2) and (3) are equivalent.  From Lemma \ref{spec} we know that $\{\mu_n:n\in\mathbb{N}\} $ is exactly the point spectrum of $K_\alpha$ on $A^2,$ where $\mu_n=\frac{\Gamma(d+1)\Gamma(\alpha+n)}{\Gamma(\alpha)\Gamma(d+1+n)},n\in\mathbb{N}.$  By the functional calculus in $C^\ast$-algebra and the canonical decomposition (\ref{cade}), it implies that \begin{equation}\label{cdec}\vert K_\alpha\vert f= \sum_{n=0}^{\infty} \left\vert\frac{\Gamma(d+1)\Gamma(\alpha+n)}{\Gamma(\alpha)\Gamma(d+1+n)}\right\vert \sum_{e_{n,k}\in P_{n}}\langle f,e_{n,k}\rangle e_{n,k} .\end{equation}

 By the direct calculation, we have $$\frac{\vert\mu_{n+1}\vert}{\vert\mu_n\vert}=\frac{\Gamma(\alpha+n+1)}{\Gamma(\alpha+n)}\frac{\Gamma(d+n+1)}{\Gamma(d+n+2)}=\frac{\alpha+n}{d+1+n}<1$$ for each $n>\vert \alpha\vert,$ which means that \begin{equation}\label{muoo}\vert \mu_{n_0}\vert >\vert \mu_{n_0+1}\vert >\vert \mu_{n_0+2}\vert >\cdots\end{equation} where $n_0$ is the minimal integer belongs to  $\{n\in\mathbb{N}:n>\vert \alpha\vert\}.$
On the other hand, it follows from (\ref{cdec}) that $\{\vert \mu_n\vert\}$ is the set of  nonzero eigenvalues (counting multiplicities) of $\vert K_\alpha\vert$ and  the multiplicity $m_n$ of the eigenvalue $\vert \mu_n\vert$  is equal to $dim \hspace{0.3mm}E_n,$  combing with (\ref{dimn}) it follows that  $$m_n= \frac{(n+1)_{d-1}}{(d-1)!}, \quad n\in\mathbb{N}.$$ Denote $M_n$ by $$M_n=\sum_{j=0}^n m_j,$$ for $ n\geq0.$ This along the well-known fact that $\sum_{j=1}^kj^n\sim k^{n+1}$ for any $n\in \mathbb{N}$ implies that \begin{equation}\label{wuqg}M_n=\sum_{k=0}^n m_k\sim n^{d}.\end{equation}
 Then  $K_\alpha\in\mathcal{L}^{p,\infty}$ if and only if $\mu_n( \vert K_\alpha\vert)=O(n^{-\frac{1}{p}})$ if and only if $$\mu_{M_n}(\vert K_\alpha\vert)=O(M_n^{-\frac{1}{p}})$$ if and only if $$n^{\alpha-(d+1)}=O(n^{-\frac{d}{p}})$$
 if and only if $p\geq\frac{d}{d+1-\alpha}.$ It completes the proof. \qed

  \begin{rem} Proposition \ref{fink} follows that $K_\alpha$ is a finite operator whenever  $\alpha$ is a nonpositive integer. In this case,  $K_\alpha\in \mathcal{L}^p(L^2)$ for any $p>0$ rather than $p>\frac{d}{d+1-\alpha}$ and $K_\alpha\in \mathcal{L}^{p,\infty}(L^2)$ for any $p>0$ rather than $p\geq\frac{d}{d+1-\alpha}.$  This shows that the condition $\alpha\in\mathscr{S}_d$ in Theorem 3,4 is necessary and sharp.
\end{rem}

  \begin{cor} For $0\leq\alpha<d+1,$ $K_\alpha$ is bounded on $L^p$ and $A^p$ for any $1<p<\infty.$ Moreover, the following operator norm identity holds.
 $$\Vert K_\alpha\Vert_{L^p\rightarrow L^p}=\Vert K_\alpha\Vert_{A^p\rightarrow A^p}=1.$$
 \end{cor}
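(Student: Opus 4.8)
The plan is to prove the $A^p$ statement first, where $K_\alpha$ has an explicit contractive averaging representation, and then to deduce the $L^p$ statement from it by self-adjointness and duality. The case $\alpha=0$ is trivial ($K_0f\equiv\int_{\mathbb{B}^d}f\,dv$, and $\|K_0f\|_p\le\|f\|_p$ by Jensen's inequality since $dv$ is a probability measure, with equality at $f\equiv1$), so I would assume $0<\alpha<d+1$. Note that $K_\alpha 1\equiv1$ in every case: expanding $(1-\langle z,w\rangle)^{-\alpha}$ and using $\int_{\mathbb{B}^d}\langle z,w\rangle^n\,dv(w)=0$ for $n\ge1$ (unitary invariance of $dv$), only the constant term survives; since $\|1\|_{L^p}=\|1\|_{A^p}=1$, this gives $\|K_\alpha\|_{L^p\to L^p}\ge1$ and $\|K_\alpha\|_{A^p\to A^p}\ge1$, so only the matching upper bounds require work.

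For the contractivity on $A^p$, I would use the Beta-integral identity $\frac{\Gamma(d+1)\Gamma(n+\alpha)}{\Gamma(\alpha)\Gamma(n+d+1)}=c_{d,\alpha}\int_0^1 t^{\,n+\alpha-1}(1-t)^{d-\alpha}\,dt$, with $c_{d,\alpha}=\frac{\Gamma(d+1)}{\Gamma(\alpha)\Gamma(d+1-\alpha)}$, together with Lemma \ref{kr} and Fubini's theorem (legitimate because the power series of $f$ converges absolutely on $\mathbb{B}^d$ and $B(n+\alpha,d+1-\alpha)$ is bounded in $n$), to obtain for $f\in A^p$ the representation
$$K_\alpha f(z)=c_{d,\alpha}\int_0^1 t^{\alpha-1}(1-t)^{d-\alpha}\,f(tz)\,dt,\qquad z\in\mathbb{B}^d,$$
exhibiting $K_\alpha$ as the average of the dilations $D_tf(z):=f(tz)$ against a probability density. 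Each $D_t$ with $0<t<1$ is a contraction on $A^p$ for every $1\le p\le\infty$: since $|f|^p$ is plurisubharmonic its ball averages are nondecreasing in the radius, so $\|D_tf\|_{A^p}^p=t^{-2d}\int_{|w|<t}|f|^p\,dv\le\int_{\mathbb{B}^d}|f|^p\,dv$ (the case $p=\infty$ is obvious). Minkowski's integral inequality then gives $\|K_\alpha f\|_{A^p}\le c_{d,\alpha}\int_0^1 t^{\alpha-1}(1-t)^{d-\alpha}\,dt\,\|f\|_{A^p}=\|f\|_{A^p}$, hence $\|K_\alpha\|_{A^p\to A^p}=1$.

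To pass from $A^p$ to $L^p$ for $1<p<\infty$, I would first note that $K_\alpha^+$ is bounded on every $L^p$: by Lemma \ref{FR} (the case $c<d+1$, with $t=0$), both $\sup_z\int_{\mathbb{B}^d}|1-\langle z,w\rangle|^{-\alpha}dv(w)$ and $\sup_w\int_{\mathbb{B}^d}|1-\langle z,w\rangle|^{-\alpha}dv(z)$ are finite, so Schur's test with weight $\equiv 1$ applies; since $|K_\alpha f|\le K_\alpha^+|f|$, the operator $K_\alpha:L^p\to L^p$ is bounded, and $K_\alpha f\in A^p$ by Lemma \ref{klp}. Fubini's theorem (justified by this boundedness) gives the pairing identity $\int_{\mathbb{B}^d}(K_\alpha f)\overline{g}\,dv=\int_{\mathbb{B}^d}f\,\overline{K_\alpha g}\,dv$ for $f\in L^p,\ g\in L^{p'}$; in particular $K_\alpha$ is self-adjoint on $L^2$. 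Then, given $f\in L^p$, since $K_\alpha f\in A^p$ and the pairing $\langle h,g\rangle=\int_{\mathbb{B}^d}h\overline{g}\,dv$ identifies $(A^p)^\ast$ with $A^{p'}$, I would choose (Hahn--Banach) $g\in A^{p'}$ with $\|g\|_{A^{p'}}=1$ and $\|K_\alpha f\|_{L^p}=\|K_\alpha f\|_{A^p}=\langle K_\alpha f,g\rangle=\langle f,K_\alpha g\rangle$; as $g$ is holomorphic, the $A^{p'}$-contractivity already proved yields $\|K_\alpha g\|_{L^{p'}}=\|K_\alpha g\|_{A^{p'}}\le\|g\|_{A^{p'}}=1$, so by H\"older's inequality $\|K_\alpha f\|_{L^p}\le\|f\|_{L^p}\|K_\alpha g\|_{L^{p'}}\le\|f\|_{L^p}$. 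Together with the lower bound this gives $\|K_\alpha\|_{L^p\to L^p}=\|K_\alpha\|_{A^p\to A^p}=1$.

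The main obstacle will be the dilation-average representation of $K_\alpha$ on $A^p$ and the verification that each $D_t$ is contractive there (via the sub-mean-value property of the plurisubharmonic function $|f|^p$); the transfer to $L^p$ is then soft, although one must be careful that the pairing identity $\langle K_\alpha f,g\rangle=\langle f,K_\alpha g\rangle$ and the identification $(A^p)^\ast\cong A^{p'}$ are both realized by the same $L^2$-type integral pairing. It is also worth emphasizing that the hypothesis $\alpha<d+1$ is genuinely used --- it is exactly what makes $c_{d,\alpha}t^{\alpha-1}(1-t)^{d-\alpha}$ a probability density after normalization and the kernel sums of $K_\alpha^+$ finite --- and that the endpoint $p=\infty$ must be excluded even for $0<\alpha<d+1$, since $\|K_\alpha\|_{L^\infty\to L^\infty}=\frac{\Gamma(d+1)\Gamma(d+1-\alpha)}{\Gamma^2(d+1-\frac{\alpha}{2})}>1$ by Lemma \ref{hi1}.
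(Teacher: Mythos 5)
Your $A^p$ half is fine, and in fact more complete than the paper's own justification (which merely cites formula (\ref{kass}) and the monotonicity (\ref{muoo}), i.e.\ the eigenvalue picture): the Beta-integral identity does turn $K_\alpha$ into an average of the dilations $D_t$ against a probability density, each $D_t$ is an $A^p$-contraction by subharmonicity of $\vert f\vert^p$, and $K_\alpha 1=1$ gives the matching lower bound, so $\Vert K_\alpha\Vert_{A^p\rightarrow A^p}=1$ for all $1\le p\le\infty$.

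The $L^p$ half, however, has a genuine gap, exactly at the step you flagged. For $p\neq 2$ the identification $(A^p)^\ast\cong A^{p'}$ under the pairing $\int h\bar g\,dv$ is only a topological isomorphism, not an isometry: a norm-one Hahn--Banach functional for $h=K_\alpha f\in A^p$ is represented by some $g_0\in L^{p'}$ with $\Vert g_0\Vert_{p'}=1$ which is not holomorphic, and the holomorphic representative of the same functional on $A^p$ is $Pg_0$ (with $P=K_{d+1}$ the Bergman projection), whose $L^{p'}$ norm is only controlled by $\Vert P\Vert_{L^{p'}\rightarrow L^{p'}}>1$. Executed correctly, your duality argument therefore yields only $\Vert K_\alpha\Vert_{L^p\rightarrow L^p}\le\Vert P\Vert_{L^{p'}\rightarrow L^{p'}}$ (equivalently, the factorization $K_\alpha=K_\alpha P$), not $\le1$. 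Moreover this is not a repairable technicality: if one had $\Vert K_\alpha\Vert_{L^p\rightarrow L^p}\le1$ for every $\alpha<d+1$, then letting $\alpha\uparrow d+1$, dominated convergence gives $K_\alpha f\rightarrow Pf$ pointwise for each bounded $f$, and Fatou's lemma would force $\Vert P\Vert_{L^p\rightarrow L^p}\le1$; if that held for every finite $p$, letting $p\rightarrow\infty$ would make $P$ a contraction on $L^\infty$, contradicting the standard fact that $P$ is unbounded on $L^\infty$ (its image of $L^\infty$ is the Bloch space). So for $p$ far from $2$ and $\alpha$ close to $d+1$ the inequality you are aiming at actually fails; what survives of the $L^p$ statement by your method is boundedness on $L^p$ (Schur test, or Theorem 2), the identity on $A^p$, and the case $p=2$, where $P$ is an orthogonal projection (equivalently, where self-adjointness plus the spectrum $\{\mu_n\}\subset(0,1]$ applies). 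This also shows that the paper's one-line citation of (\ref{kass}) and (\ref{muoo}) does not by itself justify the $L^p$ norm identity for $p\neq2$.
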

 \begin{proof} It comes from Theorem 2, Lemma \ref{klp}, formula (\ref{kass}) and (\ref{muoo}).
 \end{proof}
 Now we recall the the Definition of Dixmier trace on $\mathcal{L}^{1,\infty}(H).$ Let $0\leq T\in\mathcal{L}^{1,\infty}(H)$ and $\{\mu_n(T)\}$ be the sequence of  its point spectrum (counting multiplicities) arranged in decreasing order, namely $$\mu_0(T)\geq\mu_1(T)\geq\mu_2(T)\geq\cdots.$$ Since $0\leq T\in\mathcal{L}^{1,\infty},$ it follows that the sequence $\{\frac{1}{\log n}\sigma_n\}_{n>1}\in \ell^\infty,$ where $\sigma_n=\sum_{j=0}^n\mu_j(T)=\sum_{j=0}^n\mu_j(\vert T\vert).$ Taking an  arbitrary continuous linear functional $\textrm{Lim}_{\omega}$ on $\ell^{\infty}$ satisfying the following three conditions:
\begin{itemize}
   \item [(1)]  $\textrm{Lim}_{\omega}\{a_n\}\geq0$ if $a_n\geq0;$
   \item [(2)]  $\textrm{Lim}_{\omega}\{a_n\}=\textrm{lim}\{a_n\}$ if $\{a_n\}$ is convergent;
   \item [(3)]   $\textrm{Lim}_{\omega}\{a_1,a_1,a_2,a_2,a_3,a_3,\cdots\}=\textrm{Lim}_{\omega}\{a_1,a_2,a_3,\cdots\}.$
 \end{itemize}
 Then the Dixmier trace $\textrm{Tr}_\omega(T)$ is defined by $$\textrm{Tr}_\omega(T)=\textrm{Lim}_{\omega}\frac{1}{\log n}\sigma_n. $$ In this way we define the  Dixmier trace for all positive operators in  $\mathcal{L}^{1,\infty}(H),$ and it can be  uniquely  extended by linearity to the  whole $\mathcal{L}^{1,\infty}(H).$ In general, the Dixmier trace $\textrm{Tr}_\omega$ depends on the linear functional $\textrm{Lim}_{\omega}.$ However, in some special case,  $\textrm{Tr}_\omega$ will be independent of  $\textrm{Lim}_{\omega},$ for example  $\textrm{Tr}_\omega(T)=0$ for any $T\in\mathcal{L}^1.$ We refer the reader to \cite[Chapter 4]{Con} for more details. For positive compact operator  $T\notin\mathcal{L}^{1,\infty}(H),$ we define formally $\textrm{Tr}_\omega( T)=\infty.$ Thus $T\in\mathcal{L}^{p,\infty}(H)$ if and only if  $\vert T\vert^p\in\mathcal{L}^{1,\infty}(H)$ if and only if $\textrm{Tr}_\omega(\vert T\vert^p)<\infty.$ Then the following lemma is trivial.
 \begin{lem}\label{ttrr} Suppose that $T\in\mathcal{L}^{p,\infty}(H)$ for some $0<p<\infty$ and $0<s<t<\infty.$ 
 \begin{itemize}
   \item [(1)] If  $\textrm{Tr}_\omega(\vert T\vert^s)<\infty,$ then $\textrm{Tr}_\omega(\vert T\vert^t)=0.$
   \item [(2)]  If  $\textrm{Tr}_\omega(\vert T\vert^t)=\infty,$ then $\textrm{Tr}_\omega(\vert T\vert^s)=\infty.$
 \end{itemize}
 \end{lem}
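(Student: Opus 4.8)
The plan is to reduce both statements to one elementary fact about singular numbers together with the extended-value convention already fixed for $\textrm{Tr}_\omega$. The fact is: a positive compact operator $A$ and a power $q>0$ satisfy $\mu_n(A^q)=\mu_n(A)^q$ for every $n$, because the functional calculus with the increasing function $x\mapsto x^q$ on $[0,\infty)$ sends the eigenvalue list of $A$, arranged in decreasing order, to that of $A^q$ in the same order. Applying this with $A=\vert T\vert$ (compact since $T\in\mathcal{L}^{p,\infty}(H)$), the operators $\vert T\vert^s,\vert T\vert^t$ are positive compact, so their Dixmier traces are well defined in the sense recalled before the lemma: $\textrm{Tr}_\omega$ equals $+\infty$ on a positive compact operator lying outside $\mathcal{L}^{1,\infty}(H)$ and equals $0$ on any operator in $\mathcal{L}^1(H)$; moreover $\textrm{Tr}_\omega(\vert T\vert^r)<\infty$ is equivalent to $\vert T\vert^r\in\mathcal{L}^{1,\infty}(H)$ for each $r>0$.

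For (1), I would argue as follows. Assume $\textrm{Tr}_\omega(\vert T\vert^s)<\infty$. Then $\vert T\vert^s\in\mathcal{L}^{1,\infty}(H)$, so $\mu_n(\vert T\vert)^s=\mu_n(\vert T\vert^s)=O(n^{-1})$, i.e. $\mu_n(\vert T\vert)=O(n^{-1/s})$. Raising to the power $t$ and invoking $\mu_n(\vert T\vert^t)=\mu_n(\vert T\vert)^t$ gives $\mu_n(\vert T\vert^t)=O(n^{-t/s})$; since $t>s$ the exponent $t/s$ exceeds $1$, so $\{\mu_n(\vert T\vert^t)\}\in\ell^1$, that is $\vert T\vert^t\in\mathcal{L}^1(H)$. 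Consequently $\sigma_n=\sum_{j=0}^n\mu_j(\vert T\vert^t)$ converges, hence $\frac{1}{\log n}\sigma_n\to 0$, and property (2) in the definition of $\textrm{Lim}_\omega$ forces $\textrm{Tr}_\omega(\vert T\vert^t)=\textrm{Lim}_\omega\{\frac{1}{\log n}\sigma_n\}=0$; alternatively one simply quotes the already-noted vanishing of the Dixmier trace on $\mathcal{L}^1(H)$.

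Statement (2) is then immediate as the contrapositive of (1): for the positive compact operator $\vert T\vert^s$ the value $\textrm{Tr}_\omega(\vert T\vert^s)$ is, by the convention, either a finite real number or $+\infty$; were it finite, part (1) would yield $\textrm{Tr}_\omega(\vert T\vert^t)=0$, contradicting the hypothesis $\textrm{Tr}_\omega(\vert T\vert^t)=\infty$, so $\textrm{Tr}_\omega(\vert T\vert^s)=\infty$. I do not expect a genuine obstacle here --- this is precisely why the lemma is labeled trivial; the only points deserving an explicit line are the identity $\mu_n(\vert T\vert^q)=\mu_n(\vert T\vert)^q$ and the bookkeeping among the nested regimes $\mathcal{L}^1(H)\subset\mathcal{L}^{1,\infty}(H)$ within the positive compact operators that the extended definition of $\textrm{Tr}_\omega$ encodes.
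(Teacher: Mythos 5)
Your proof is correct and follows exactly the route the paper intends when it declares this lemma trivial: the identity $\mu_n(\vert T\vert^q)=\mu_n(\vert T\vert)^q$, the convention that $\textrm{Tr}_\omega=\infty$ off $\mathcal{L}^{1,\infty}(H)$, and the vanishing of the Dixmier trace on $\mathcal{L}^1(H)$, with (2) as the contrapositive of (1). No gaps; your write-up simply spells out the details the paper omits.
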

 Lemma \ref{ttrr} permits us to introduce the Hausdorff dimension of compact operators as below.  Compare to the Hausdorff dimension for subsets in metric space \cite[Definition 1.1.13]{LiY}.
 \begin{defi}\label{dimhau} The Hausdorff dimension of a compact operator $T$ is defined to be
 $$dim_{\textrm{H}_\omega}(T):=\inf\{0< s<\infty:\textrm{Tr}_\omega(\vert T\vert^s)=0\} ,$$ if there exists $0<p<\infty$ such that  $T\in\mathcal{L}^{p,\infty}(H).$  Otherwise, the Hausdorff dimension of the compact operator $T$ is $$dim_{\textrm{H}_\omega}(T):=\infty.$$
 \end{defi}
  It is immediate that $dim_{\textrm{H}_\omega}(T)\leq p$ if $T\in\mathcal{L}^{p,\infty}(H).$ In fact, there is no difficulty in proving that \begin{equation}\label{hsdf}dim_{\textrm{H}_\omega}(T)=\inf\{0<p<\infty:T\in\mathcal{L}^{p,\infty}(H)\}\end{equation} if $T\in\mathcal{L}^{p,\infty}(H)$ for some $0<p<\infty.$ Which means the definition of Hausdorff dimension of  operator is independent of the  choice of the linear functional $\textrm{Lim}_\omega.$ Thus,  we will omit the functional $\omega$ in $dim_{\textrm{H}_\omega}$ in the following.  In particular,  $dim_{\textrm{H}}(T)=0$ for any finite rank operator $T.$  The following example shows that the Hausdorff dimension can take every value in the interval $[0,\infty].$
\begin{exam}  Let $T$ be a positive compact operator and $\{\mu_n(T)\}$ be the sequence of  its point spectrum (counting multiplicities) arranged in decreasing order.
\begin{itemize}
   \item [(1)] If  $\mu_n(T)=0$ for any $n>n_0,$ then  $dim_{\textrm{H}}(T)=0.$
   \item [(2)] If $\mu_n(T)=n^{-\frac{1}{\beta}}$ for $0<\beta<\infty,$ then $dim_{\textrm{H}}(T)=\beta.$
   \item [(3)] If $\mu_n(T)=\frac{1}{\log n}$,  then $dim_{\textrm{H}}(T)=\infty.$
    \end{itemize}
  \end{exam}
 In the rest of the paper, we will use $\mathcal{L}^{p}$ instead of $\mathcal{L}^{p}(L^2)$ or $\mathcal{L}^{p}(A^2)$ and  $\mathcal{L}^{p,\infty}$ instead of $\mathcal{L}^{p,\infty}(L^2)$ or $\mathcal{L}^{p,\infty}(A^2)$ for $0<p<\infty.$
 \begin{prop}\label{dixtr} Bergman-type operator $ K_\alpha\in\mathcal{L}^{1,\infty}$ if and only if $\alpha\leq1.$ Moreover, the Dixmier trace of $K_\alpha$ is
 \begin{equation}\label{dtra}
\textrm{Tr}_\omega(K_\alpha)=
\begin{cases}
0,& \alpha<1;\\
1,& \alpha=1.
\end{cases}
\end{equation}
   \end{prop}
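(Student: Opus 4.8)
The plan is to combine the Schatten/Macaev description of $K_\alpha$ from Theorem 4 with the functional‑calculus inequality of Lemma \ref{funo}, and then evaluate the Dixmier trace directly from the canonical decomposition. First I would handle the membership claim: by Theorem 4, $K_\alpha\in\mathcal{L}^{p,\infty}$ if and only if $p\ge\frac{d}{d+1-\alpha}$, provided $\alpha\in\mathscr{S}_d$; taking $p=1$ this reads $1\ge\frac{d}{d+1-\alpha}$, i.e. $\alpha\le1$. The boundary cases $\alpha=1$ and the excluded values $-\alpha\in\mathbb{N}$ must be checked by hand: when $\alpha$ is a nonpositive integer Proposition \ref{fink} says $K_\alpha$ is finite rank, hence lies in $\mathcal{L}^{1,\infty}$ with $\textrm{Tr}_\omega(K_\alpha)=0$ (consistent with the $\alpha<1$ branch of \eqref{dtra}); and when $\alpha>1$ the same computation with the eigenvalue asymptotics $\mu_n\sim n^{\alpha-(d+1)}$ and multiplicities $m_n\sim n^{d-1}$ (so $\mu_{M_n}\sim M_n^{(\alpha-(d+1))/d}$ with $M_n\sim n^d$) shows $K_\alpha\notin\mathcal{L}^{1,\infty}$, whence $\textrm{Tr}_\omega(K_\alpha)=\infty$ by our convention, so indeed $K_\alpha\in\mathcal{L}^{1,\infty}\iff\alpha\le1$.

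For the value of the Dixmier trace, I split into $\alpha<1$ and $\alpha=1$. When $\alpha<1$ we have $K_\alpha\in\mathcal{L}^{p}$ for every $p>\frac{d}{d+1-\alpha}$ by Theorem 3; choosing $p<1$ (possible exactly because $\alpha<1$ forces $\frac{d}{d+1-\alpha}<1$) gives $K_\alpha\in\mathcal{L}^{p}\subset\mathcal{L}^{1}$, and since the Dixmier trace vanishes on trace‑class operators we get $\textrm{Tr}_\omega(K_\alpha)=0$. Alternatively, and more in the spirit of Lemma \ref{ttrr}(1): for $\alpha<1$ one can find $s<1$ with $\textrm{Tr}_\omega(|K_\alpha|^s)<\infty$, and $\textrm{Tr}_\omega(|K_\alpha|)=0$ follows. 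This is the easy branch.

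The substantive case is $\alpha=1$. Here $\frac{d}{d+1-\alpha}=\frac{d}{d}=1$, and by Theorem 4 (applied with $\alpha=1\in\mathscr{S}_d$) $K_1\in\mathcal{L}^{1,\infty}\setminus\mathcal{L}^1$, so the Dixmier trace is genuinely a limit of $\frac{1}{\log n}\sigma_n$ and need not vanish. Using Lemma \ref{spec}(1), the nonzero eigenvalues of $|K_1|$ are $|\mu_n|=\bigl|\frac{\Gamma(d+1)\Gamma(1+n)}{\Gamma(1)\Gamma(d+1+n)}\bigr|=\frac{d!\,n!}{(d+n)!}=\frac{d!}{(n+1)(n+2)\cdots(n+d)}\sim d!\,n^{-d}$, with multiplicity $m_n=\#P_n=\frac{(n+1)_{d-1}}{(d-1)!}\sim \frac{n^{d-1}}{(d-1)!}$. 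The partial sums of the spectrum counted with multiplicity are therefore
\begin{equation}
\sigma_{M_N}=\sum_{n=0}^{N}m_n\,|\mu_n|\sim\sum_{n=1}^{N}\frac{n^{d-1}}{(d-1)!}\cdot\frac{d!}{n^{d}}=d\sum_{n=1}^{N}\frac{1}{n}\sim d\log N,
\end{equation}
while $M_N=\sum_{n=0}^N m_n\sim \frac{N^d}{d!}$, so $\log M_N\sim d\log N$, giving $\frac{\sigma_{M_N}}{\log M_N}\to 1$. Property (3) of $\textrm{Lim}_\omega$ (invariance under the doubling/interlacing of entries) lets one pass from the subsequence indexed by the $M_N$ to the full sequence $\{\sigma_n\}$ — one must check that $\sigma_n$ interpolates monotonically between consecutive $\sigma_{M_N}$ and that the ratio of $\log$'s and of $\sigma$'s along a block of length $m_{N+1}=o(M_N)$ is controlled — and conclude $\textrm{Tr}_\omega(K_1)=\textrm{Lim}_\omega\frac{\sigma_n}{\log n}=1$, independently of $\omega$.

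I expect the only real obstacle to be this last passage from the subsequence $\{M_N\}$ to all of $\mathbb{N}$: one needs a clean estimate showing $\frac{\sigma_n}{\log n}\to1$ and not merely along $n=M_N$. The cleanest route is to note $\sigma_n$ is nondecreasing and $\sigma_{M_{N+1}}-\sigma_{M_N}=m_{N+1}|\mu_{N+1}|\sim\frac{d}{N+1}\to0$, while $\log n$ between $M_N$ and $M_{N+1}$ varies by $o(1)$ relative to $\log M_N\to\infty$; sandwiching $\frac{\sigma_n}{\log n}$ between $\frac{\sigma_{M_N}}{\log M_{N+1}}$ and $\frac{\sigma_{M_{N+1}}}{\log M_N}$ forces the full limit to be $1$, so in fact $\textrm{Tr}_\omega(K_1)=1$ for every choice of $\textrm{Lim}_\omega$ (the operator is even "measurable" in Connes' sense). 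Assembling the three branches—$\alpha$ a nonpositive integer, $-\alpha\notin\mathbb{N}$ with $\alpha<1$, and $\alpha=1$—yields \eqref{dtra}, and the $\alpha>1$ case gives the stated equivalence. $\qed$
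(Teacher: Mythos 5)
Your proposal is correct and follows essentially the same route as the paper: membership via Theorem 4 (with the finite-rank case handled separately), vanishing of the trace for $\alpha<1$ via trace-class membership from Theorem 3, and for $\alpha=1$ the explicit eigenvalues $\mu_n m_n=\frac{d}{n+d}$, the asymptotics $\sigma_{M_N}\approx d\log N\approx\log M_N$, and the sandwich estimate showing $\frac{\sigma_n}{\log n}\to1$, so that property (2) of $\textrm{Lim}_\omega$ gives $\textrm{Tr}_\omega(K_1)=1$. Your closing sandwich argument is exactly the paper's inequality (\ref{sigg}), so no substantive difference remains.
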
 
 \begin{proof} By Theorem 4, it suffices to prove the Dixmier trace formula (\ref{dtra}). It follows from Theorem 3 that $ K_\alpha\in\mathcal{L}^{1}$ whenever $\alpha<1,$ thus $\textrm{Tr}_\omega(K_\alpha)=0$ whenever $\alpha<1.$ In what follows, we calculate the value of  $\textrm{Tr}_\omega(K_1).$ Note that $K_1$ is positive and its point spectrum is $\{\mu_n=\frac{\Gamma(d+1)\Gamma(1+n)}{\Gamma(d+1+n)}:n\in\mathbb{N}\}$ from Lemma \ref{spec}. By the direct calculation, we have
 $$\frac{\mu_{n+1}}{\mu_n}=\frac{\Gamma(1+n+1)}{\Gamma(1+n)}\frac{\Gamma(d+n+1)}{\Gamma(d+n+2)}=\frac{1+n}{d+1+n}<1,$$  for any $n\geq0,$ namely
 \begin{equation}\label{sppm} \mu_0>\mu_1>\mu_2>\cdots>0.\end{equation} It follows from (\ref{cheq}) and (\ref{dimn}) that the multiplicity $m_n$ of $\mu_n$ is $$m_n= \frac{(n+1)_{d-1}}{(d-1)!}>0$$ for any $n\geq0.$ Note that for any integer $ k\geq0,$ then there exists an unique integer $n\geq0$ satisfying $$M_n\leq k<M_{n+1},$$  where $$M_n=\sum_{j=0}^n m_j,$$ for $ n\geq0.$ Combing this with (\ref{sppm}), it follows that \begin{equation}\label{sigg} \frac{\sigma_{M_n}}{\log M_{n+1}}\leq \frac{1}{\log k}\sigma_k\leq\frac{\sigma_{M_{n+1}}}{\log M_{n}},\end{equation} where $\sigma_k=\sum_{j=0}\mu_j( K_1)=\sum_{j=0}\mu_j( \vert K_1\vert).$ It implies from (\ref{wuqg}) that
  $$\log M_n=\log \sum_{j=0}^n\frac{(j+1)_{d-1}}{(d-1)!}\approx d\log n,$$
 where $a(n)\approx b(n)$ means that $a(n),b(n)\rightarrow +\infty$ and $\frac{a(n)}{b(n)}\rightarrow 1.$ The relationship between $\{\mu_n(K_1)\}$ and $\{\mu_n\}$ implies that
 $$\sigma_{M_n}=\sum_{j=0}^n\mu_jm_j=\sum \frac{\Gamma(d+1)\Gamma(1+j)}{\Gamma(d+1+j)}\frac{(j+1)_{d-1}}{(d-1)!}=\sum_{j=0}^n\frac{d}{j+d}\approx d\log n.$$ Combing with (\ref{sigg}) follows that $$\lim \frac{1}{\log n}\sigma_n=1.$$ Thus $$\textrm{Tr}_\omega(K_1)=\textrm{Lim}_{\omega} \hspace{0.3mm}\frac{1}{\log n}\sigma_n=\lim \frac{1}{\log n}\sigma_n=1,$$ by the condition (2) in the definition of  Dixmier trace.
 It completes the proof.
 \end{proof}
\begin{cor}\label{Hfdim} The followings hold.
\begin{itemize}
   \item [(1)] If  $\alpha\in\mathscr{S}_d,$ then ${dim}_{\textrm{H}}(K_\alpha)=\frac{d}{d+1-\alpha}.$
   \item [(2)]  If  $\alpha$ is a nonpositive integer, then ${dim}_{\textrm{H}}(K_\alpha)=0.$
 \end{itemize}
\end{cor}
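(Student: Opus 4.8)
The plan is to read both statements off the membership results already established, using the two equivalent descriptions of the Hausdorff dimension: the original Definition \ref{dimhau} and the reformulation (\ref{hsdf}).

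For part (1), assume $\alpha\in\mathscr{S}_d$. Theorem 4 shows that $K_\alpha\in\mathcal{L}^{p,\infty}$ if and only if $p\geq\frac{d}{d+1-\alpha}$; in particular $K_\alpha$ lies in $\mathcal{L}^{p,\infty}$ for at least one $p$, so the hypothesis under which the identity (\ref{hsdf}) holds is satisfied. Hence
$$
{dim}_{\textrm{H}}(K_\alpha)=\inf\{0<p<\infty: K_\alpha\in\mathcal{L}^{p,\infty}\}
=\inf\Bigl\{p:\, p\geq\tfrac{d}{d+1-\alpha}\Bigr\}=\frac{d}{d+1-\alpha}.
$$
One can equally argue straight from Definition \ref{dimhau} together with Lemma \ref{ttrr}: for $s>\frac{d}{d+1-\alpha}$, Theorem 3 gives $K_\alpha\in\mathcal{L}^{s}$, so $\vert K_\alpha\vert^{s}\in\mathcal{L}^{1}$ and $\textrm{Tr}_\omega(\vert K_\alpha\vert^{s})=0$; for $0<s<\frac{d}{d+1-\alpha}$, Theorem 4 gives $K_\alpha\notin\mathcal{L}^{s,\infty}$, so $\textrm{Tr}_\omega(\vert K_\alpha\vert^{s})=\infty$; taking the infimum over those $s$ with vanishing Dixmier trace yields the same value $\frac{d}{d+1-\alpha}$ (the endpoint $s=\frac{d}{d+1-\alpha}$ is excluded, since $K_\alpha\notin\mathcal{L}^{\frac{d}{d+1-\alpha}}$).

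For part (2), a nonpositive integer $\alpha$ does not belong to $\mathscr{S}_d$, so Theorem 4 does not apply; instead I would invoke Proposition \ref{fink}, by which $K_\alpha\colon L^2\to L^2$ is a finite rank operator. Since every finite rank operator has Hausdorff dimension $0$ (the remark following (\ref{hsdf})), we obtain ${dim}_{\textrm{H}}(K_\alpha)=0$.

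The argument is essentially bookkeeping, so I expect no genuine obstacle; the only points that require care are that the reformulation (\ref{hsdf}) is indeed applicable in part (1) — which Theorem 4 guarantees once $K_\alpha$ is known to lie in some $\mathcal{L}^{p,\infty}$ — and that in part (2) one must step outside the regime $\alpha\in\mathscr{S}_d$ and invoke the finite-rank fact rather than the spectral asymptotics of Theorem 4.
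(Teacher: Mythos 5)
Your proposal is correct and follows essentially the same route as the paper: part (1) is read off from the membership characterizations together with the reformulation (\ref{hsdf}) (the paper cites Theorem 3, you use Theorem 4, but since (\ref{hsdf}) is phrased via $\mathcal{L}^{p,\infty}$ this is the same bookkeeping, and your alternative argument via Definition \ref{dimhau} and Lemma \ref{ttrr} is just a spelled-out version of it), and part (2) is exactly the paper's appeal to Proposition \ref{fink} and the vanishing of the Hausdorff dimension of finite rank operators.
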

\begin{proof} (1) It comes from Theorem 3 and (\ref{hsdf}).

(2) It comes from Lemma \ref{fink} that $K_\alpha$ is finite rank if $\alpha$ is nonpositive integer.
\end{proof}

Then Theorem 3,4, Proposition \ref{fink} and Corollary \ref{Hfdim} imply the following  intrinsic characterization for Schatten class and Macaev class Bergman-type operator $K_\alpha.$ 
\begin{cor}\label{smcha} For $0<p<\infty,$ the followings hold.
\begin{itemize}
   \item [(1)] If $K_\alpha$ is compact, then $K_\alpha\in \mathcal{L}^{p}$ if and only if $p>{dim}_{\textrm{H}}(K_\alpha).$
   \item [(2)] If  $K_\alpha$ is compact, then $K_\alpha\in \mathcal{L}^{p,\infty}$ if and only if $p\geq{dim}_{\textrm{H}}(K_\alpha).$
 \end{itemize}
 
 The compact condition in Corollary \ref{smcha} can be removed, for example, since both $K_\alpha\in \mathcal{L}^{p}$ and $p>{dim}_{\textrm{H}}(K_\alpha) $ can derive the compactness of $K_\alpha.$ Corollary \ref{smcha} provides an example that  the Schatten membership of a compact operator can be characterized by its Hausdorff dimension. However, compare to \cite[Proposition 4.3.14]{Con}, which  means that  the Schatten membership of some compact operators can be characterized by the Hausdorff dimension of some subsets in metric spaces.
\end{cor}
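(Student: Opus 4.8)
The plan is to deduce the corollary from results already in hand, after a two-way split on the value of $\alpha$ allowed by the standing compactness hypothesis. First I would note that, by Theorem 2 and Proposition \ref{fink}, $K_\alpha$ can be compact on the Hilbert space $L^2$ --- equivalently, on $A^2$, since the two ideals coincide as noted in the proof of Theorem 3 --- only when $\alpha<d+1$; hence $\alpha$ is either a nonpositive integer or lies in $\mathscr{S}_d$, and these two alternatives are mutually exclusive and together exhaust $\{\alpha<d+1\}$.

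Assume first $\alpha\in\mathscr{S}_d$. Then Corollary \ref{Hfdim}(1) gives ${dim}_{\textrm{H}}(K_\alpha)=\frac{d}{d+1-\alpha}$, which is positive and finite because $\alpha<d+1$. With this identity in place, part (1) of the present corollary is precisely the equivalence of items (1) and (4) of Theorem 3, namely $K_\alpha\in\mathcal{L}^{p}$ if and only if $p>\frac{d}{d+1-\alpha}={dim}_{\textrm{H}}(K_\alpha)$; and part (2) is precisely the equivalence of items (1) and (3) of Theorem 4, namely $K_\alpha\in\mathcal{L}^{p,\infty}$ if and only if $p\geq\frac{d}{d+1-\alpha}={dim}_{\textrm{H}}(K_\alpha)$.

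Assume next that $\alpha$ is a nonpositive integer. By Proposition \ref{fink}(2), $K_\alpha$ is a finite rank operator, so $K_\alpha\in\mathcal{L}^{p}$ and $K_\alpha\in\mathcal{L}^{p,\infty}$ for every $0<p<\infty$; on the other hand ${dim}_{\textrm{H}}(K_\alpha)=0$ by Corollary \ref{Hfdim}(2), so both $p>{dim}_{\textrm{H}}(K_\alpha)$ and $p\geq{dim}_{\textrm{H}}(K_\alpha)$ hold for all $p\in(0,\infty)$ as well. Thus in this case both sides of each equivalence are unconditionally true, and the corollary follows. For the closing remark one observes that $K_\alpha\in\mathcal{L}^{p}$, or $p>{dim}_{\textrm{H}}(K_\alpha)$ (which forces ${dim}_{\textrm{H}}(K_\alpha)<\infty$, hence $K_\alpha\in\mathcal{L}^{q,\infty}\subset\mathcal{L}^{\infty}$ for some $q$ by Definition \ref{dimhau}), already yields compactness of $K_\alpha$, so the hypothesis in (1) and (2) may be dropped.

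I do not expect any genuine obstacle: all the analytic content is carried by Theorem 3, Theorem 4 and Proposition \ref{fink}, and the only points that really need stating are the case exhaustion $\{\alpha<d+1\}=\mathscr{S}_d\cup\{\alpha:-\alpha\in\mathbb{N}\}$ and the routine observation that in the finite rank case each equivalence degenerates into a statement that is true on both sides.
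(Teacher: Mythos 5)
Your proposal is correct and follows essentially the same route as the paper, which derives Corollary \ref{smcha} directly from Theorem 3, Theorem 4, Proposition \ref{fink} and Corollary \ref{Hfdim}; your only addition is to make explicit the case split $\{\alpha<d+1\}=\mathscr{S}_d\cup\{\alpha:-\alpha\in\mathbb{N}\}$ and the trivial finite-rank case, which is exactly the intended argument.
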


{\noindent{\bf{Acknowledgements.}}
 The authors would  like to thank  Qi'an Guan and Kai Wang for their helpful  discussions.

\bibliographystyle{plain}
 
\end{document}